\newtheorem{thm}{Theorem}[section]
\newtheorem{lma}[thm]{Lemma}
\newtheorem{cor}[thm]{Corollary}
\newtheorem{prp}[thm]{Proposition}
\newtheorem{clm}[thm]{Claim}
\newtheorem{rmk}[thm]{Remark}
\newtheorem*{alg}{Algorithm for finding $F$-factors}
\newtheorem{qns}[thm]{Question}
\numberwithin{equation}{section}
\def\eps{\varepsilon}
\DeclareMathOperator{\ex}{ex}
\newcommand*\patchAmsMathEnvironmentForLineno[1]{%
  \expandafter\let\csname old#1\expandafter\endcsname\csname #1\endcsname
  \expandafter\let\csname oldend#1\expandafter\endcsname\csname end#1\endcsname
  \renewenvironment{#1}%
     {\linenomath\csname old#1\endcsname}%
     {\csname oldend#1\endcsname\endlinenomath}}%
\newcommand*\patchBothAmsMathEnvironmentsForLineno[1]{%
  \patchAmsMathEnvironmentForLineno{#1}%
  \patchAmsMathEnvironmentForLineno{#1*}}%
\title{$F$-factors in hypergraphs via absorption\footnote{2010 Mathematics Subject Classification: Primary 05C65, 05C70, 05C07.
Key words and phrases: hypergraph, k-graph, F-factor, minimum degree}}
\author{Allan Lo
\footnote{This author was supported by the ERC, grant no.~258345.}
}
\affil{School of Mathematics, University of Birmingham, \\Birmingham, B15 2TT, UK\\
\texttt{s.a.lo@bham.ac.uk}}
\author{Klas Markstr{\"o}m}
\affil{Department of Mathematics and Mathematical Statistics,\\ Ume\r{a} University, S-901 87 Ume\r{a}, Sweden\\
\texttt{klas.markstrom@math.umu.se}}
\begin{document}
\maketitle

\abstract{
Given integers $n \ge k >l \ge 1$ and a $k$-graph $F$ with $|V(F)|$ divisible by~$n$, define $t_l^k(n,F)$ to be the smallest integer $d$ such that every $k$-graph $H$ of order $n$ with minimum $l$-degree $\delta_l(H) \ge d $ contains an $F$-factor.
A classical theorem of Hajnal and Szemer\'{e}di~\cite{MR0297607} implies that $t^2_1(n,K_t) = (1-1/t)n$ for integers~$t$.
For $k \ge 3$, $t^k_{k-1}(n,K_k^k)$ (the $\delta_{k-1}(H)$ threshold for perfect matchings) has been determined by K\"{u}hn and Osthus~\cite{MR2207573} (asymptotically) and R\"{o}dl, Ruci\'{n}ski and Szemer\'{e}di~\cite{MR2500161} (exactly) for large $n$.

In this paper, we generalise the absorption technique of  R\"{o}dl, Ruci\'{n}ski and Szemer\'{e}di~\cite{MR2500161} to $F$-factors.
We determine the asymptotic values of $t^k_1(n,K_k^k(m))$ for $k = 3,4$ and $m \ge 1$.
In addition, we show that for $t>k = 3$ and $\gamma >0$, $ t^3_{2}(n,K_t^3) \le \left( 1- \frac{2}{t^2-3t+4} + \gamma \right) n$ provided $n$ is large and $t | n$.
We also bound $t^3_{2}(n,K_t^3)$ from below.
In particular, we deduce that $t^3_2(n,K_4^3) = (3/4+o(1))n$ answering a question of Pikhurko~\cite{MR2438870}.
In addition, we prove that $t^k_{k-1}(n,K_t^k) \le (1- \binom{t-1}{k-1}^{-1} + \gamma)n$ for $\gamma >0$, $k \ge 6$ and $t \ge (3+ \sqrt5)k/2$  provided $n$ is large and $t | n$.
}


\section{Introduction} \label{sec:introduction}

Given graphs $G$ and $F$, an \emph{$F$-factor} (or \emph{perfect $F$-tiling}) in~$G$ is a spanning subgraph consisting of vertex-disjoint copies of~$F$.
Clearly, if $G$ contains an $F$-factor then $|V(F)|$ divides~$|V(G)|$.
Given an integer $t$ and a graph $G$ of order $n$ with $t|n$, we would like to know the minimum degree threshold that guarantees a $K_t$-factor in~$G$.
Note that the minimum degree must be at least $(t-1)n/t$ by considering a complete $t$-partite graph with partition classes $V_1, \dots, V_t$ with $|V_1| = n/t-1$, $|V_t| = n/t+1$ and $|V_i| = n/t$ for $1<i<t$.
In fact, $\delta(G) \ge (t-1)n/t$ suffices.
For $t=2$, this can be easily verified using Dirac's Theorem~\cite{MR0047308}.
Corr\'{a}di and Hajnal~\cite{MR0200185} proved the case for $t=3$.
All remaining cases $t \ge 4$ can be verified by a classical theorem of Hajnal and Szemer\'{e}di~\cite{MR0297607}.
In this paper, we ask the same question for hypergraphs. 

We often write $l$-sets for $l$-element sets.
Given a set $U$, we denote by $\binom{U}l$ the set of all $l$-sets in~$U$.
A \emph{$k$-uniform hypergraph}, \emph{$k$-graph} for short, is a pair $H = (V(H),E(H))$, where $V(H)$ is a finite set of vertices and $E(H) \subseteq \binom{V(H)}{k}$.
Often we write $V$ instead of $V(H)$ when it is clear from the context.
Given a $k$-graph $H$ and an $l$-set $T \in \binom{V}l$, let $\deg(T)$ be the number of $(k-l)$-sets $S \in \binom{V}{k-l}$ such that $S \cup T$ is an edge in~$H$.
Define $\delta_l(H)$ to be the \emph{minimum $l$-degree} of $H$, that is, $\min \deg(T)$ over all $T \in \binom{V}l$.
Note that $\delta_{1}(H)$ is the minimum vertex degree and $\delta_{k-1}(H)$ is the minimum codegree of~$H$.

Analogously, given a hypergraph~$H$ and a family~$\mathcal{F}$ of hypergraphs, an $\mathcal{F}$-factor is a spanning subgraph consisting of vertex-disjoint copies of members of~$\mathcal{F}$.
For a family~$\mathcal{F}$ of $k$-graphs, define $t_l^k(n,\mathcal{F})$ to be the smallest integer $d$ such that every $k$-graph $H$ of order $n$ satisfying $\delta_l(H) \ge d $ contains an $\mathcal{F}$-factor.
Throughout this paper, $\mathcal{F}$ is assumed to be $\{F\}$, so we simply write $F$-factor and $t_l^k(n,F)$.
Note that if $|V(F)|$ does not divide~$n$, then $t^k_l(n,F)$ is not defined.
Thus, we always assume that $|V(F)|$ divides $n$ whenever we talk about $t^k_l(n,F)$.
When $l=k-1$, we simply write $t^k(n,F)$.
Let $t_l^k(n, t)$ denote $t_l^k(n, K_t^k)$, where $K_t^k$ is the complete $k$-graph on $t$ vertices.
Thus, $t^2(n, t) = (t-1)n/t$.

For graphs (that is, 2-graphs) $F$, there is a large body of research on $t^2(n, F)$, for surveys see~\cite{MR2588541,yuster2007combinatorial}. 
However, for $k \ge 3$, only a few values of $t^k_l(n,F)$ are determined for $k$-graphs~$F$.
Note that $K_k^k$ is a single edge, so a $K_k^k$-factor is equivalent to a perfect matching.
K\"{u}hn and Osthus~\cite{MR2207573} showed that $t^k(n,k) = n/2 + O(\sqrt{n \log n})$.
Later, R\"{o}dl, Ruci\'{n}ski and Szemer\'{e}di~\cite{MR2500161} evaluated the exact value of $t^k(n,k)$ using an absorption technique.
H\'{a}n, Person and Schacht~\cite{MR2496914} conjectured that for $1 \le l < k$
\begin{align*}
t_l^k(n, k) \approx \max \left\{ \frac12, 1 - \left(1-\frac1k\right)^{k-l} \right\} \binom{n}k.
\end{align*}
We recommend \cite{rodldirac} for a survey of the recent developments in~$t_l^k(n, k)$.
Pikhurko~\cite{MR2438870} showed that $3n/4-2 \le t^3(n, 4) \le 0.860 n$ and asked whether $ t^3(n, 4) = 3n/4-2$.
For the unique 3-graph $F$ of order~4 with 2 edges, K\"{u}hn and Osthus~\cite{MR2274077} showed that $t^3(n,F) = (1/4+o(1))n$, and the exact value was determined by Czygrinow, DeBiasio and Nagle~\cite{czygrinow2011tiling} for large~$n$.
For the unique 3-graph $K_4^3-e$ of order~4 with 3 edges, the authors~\cite{MR3007146} showed that $t^3(n,K_4^3-e) = (1/2+o(1))n$.
Recently, Kierstead and Mubayi~\cite{kierstead2010toward} proved a  generalisation of the Hajnal--Szemer\'{e}di theorem for 3-graphs and vertex degree, which implies that 
\begin{align*}
t_1^3(n,t) \le \left( 1- \frac{c}{t^2 \log^4(n/t)}  \right) \binom{n}2
\end{align*}
for some constant $c>0$.

One of the key techniques in finding perfect matchings (evaluating $t_l^k(n, k)$) is the absorption technique, which was first introduced by R\"{o}dl, Ruci\'{n}ski and Szemer\'{e}di~\cite{MR2500161}.
Roughly speaking, the absorption technique reduces the task of finding a perfect matching in $H$ to finding a matching covering all but at most $\eps |V(H)|$ vertices for some small $\eps >0$. 
Here, we generalise the absorption technique to $F$-factors.

Let $H$ be a $k$-graph of order $n$.
Given a vertex set $U \subseteq V(H)$, $H[U]$ is the subgraph of~$H$ induced by the vertices of~$U$.
We write $v$ to mean the set $\{v\}$ when it is clear from the context.
Given a $k$-graph~$F$ of order~$t$, an integer $i \ge 1$ and a constant $\eta>0$, we say that a vertex~$x$ is \emph{$(F, i, \eta)$-close} to a vertex~$y$ if there exist $\eta n^{it-1}$ sets~$S \subseteq V(H)$ of size $it-1$ such that $S \cap \{x,y\} = \emptyset$ and both $H[S \cup x]$ and $H[S \cup y]$ contain $F$-factors.
Moreover, $H$ is said to be \emph{$(F, i , \eta)$-closed} if every vertex is $(F, i, \eta)$-close to all other vertices.
We now state the absorption lemma for $F$-factors.

\begin{lma}[Absorption lemma for $F$-factors] \label{lma:absorptionlemma}
Let $t$ and $i$ be positive integers and let $\eta >0$.
Let $F$ be a hypergraph of order~$t$.
Then, there is an integer $n_0 = n_0(t,i,\eta)$ satisfying the following: Suppose that $H$ is an $(F,i,\eta)$-closed hypergraph of order $n \ge n_0$.
Then there exists a vertex subset $U \subseteq V(H)$ of size $|U| \le (\eta/2)^{t} n / (4 it(t-1))$ with $|U| \in t \mathbb{Z}$ such that there exists an $F$-factor in $H[U \cup W]$ for every vertex set $W \subseteq V \setminus U$ of size $|W|  \le (\eta/2)^{2t} n / (32 i^2 t (t-1)^2) $ with $ |W| \in t \mathbb{Z}$.
\end{lma}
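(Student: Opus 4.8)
The plan is to construct the absorbing set $U$ as a vertex-disjoint union of many small ``absorbing gadgets''. One arranges, via the probabilistic method, that for every $t$-set $T\subseteq V(H)$ there is a large supply of gadgets inside $U$ that can ``swallow'' $T$; then, given $W$, one partitions $W$ into $t$-sets, feeds each into a private gadget, and tiles the remaining gadgets by their own $F$-factors.

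First I would isolate the gadget. Call a set $A$ of $m:=it^2$ vertices a \emph{$T$-absorber} (for a $t$-set $T=\{w_1,\dots,w_t\}$ disjoint from $A$) if both $H[A]$ and $H[A\cup T]$ contain $F$-factors. Such absorbers arise in the shape $A=V(F')\cup Q_1\cup\dots\cup Q_t$, where $F'$ is a copy of $F$ in $H$ with vertices $v_1,\dots,v_t$, and $Q_1,\dots,Q_t$ are pairwise disjoint $(it-1)$-sets, also disjoint from $V(F')\cup T$, chosen so that $H[Q_j\cup v_j]$ and $H[Q_j\cup w_j]$ both have $F$-factors. Then $H[A]$ is tiled by the $F$-factors of the $H[Q_j\cup v_j]$, and $H[A\cup T]$ is tiled by $F'$ together with the $F$-factors of the $H[Q_j\cup w_j]$. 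Next I would count: each $v_j$ is $(F,i,\eta)$-close to $w_j$, so there are at least $\eta n^{it-1}$ candidate sets $Q_j$ in $H$, hence at least $(\eta/2)n^{it-1}$ once the $O(1)$ already-used vertices are forbidden; and a short double count from closeness --- each set $S$ with $H[S\cup x]$ $F$-factorable contributes a copy of $F$ through $x$, and a fixed such copy arises from at most $n^{(i-1)t}$ such $S$ --- gives at least $(\eta/(2t))n^{t}$ copies of $F$ in $H$ avoiding $T$. Multiplying these choices (and dividing by the $O(1)$ overcount of each set $A$) shows every $t$-set $T$ has at least $c\,n^{m}$ $T$-absorbers, for some $c=c(\eta,i,t)>0$.

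The extraction and assembly then follow the standard absorption scheme. Put each $T$-absorber (over all $t$-sets $T$) into a random family $\mathcal{A}$ independently with a suitable probability $p$ of order $n^{1-m}$; a first-moment bound on $|\mathcal{A}|$, a Chernoff bound with a union bound over the at most $n^{t}$ choices of $T$ for the number of members of $\mathcal{A}$ absorbing $T$, a first-moment bound on the $O(n)$ intersecting pairs, and the deletion of one absorber from each such pair yield a pairwise-disjoint $\mathcal{A}$ with $|\mathcal{A}|\le (\eta/2)^{t}n/(4m)$ such that every $t$-set $T$ is absorbed by at least $(\eta/2)^{2t}n/32$ members of $\mathcal{A}$; the constant hidden in $p$ is tuned to the stated bounds. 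Set $U:=\bigcup_{A\in\mathcal{A}}A$, so $|U|=m|\mathcal{A}|\le (\eta/2)^{t}n/4$. Given $W\subseteq V\setminus U$ with $t\mid|W|$ and $|W|\le (\eta/2)^{2t}tn/32$, write $W=T_1\cup\dots\cup T_r$ with $T_j$ disjoint $t$-sets and $r=|W|/t\le (\eta/2)^{2t}n/32$, and greedily pick distinct $A_1,\dots,A_r\in\mathcal{A}$ with $A_j$ a $T_j$-absorber (at the $j$-th step at least $(\eta/2)^{2t}n/32-(j-1)\ge 1$ members of $\mathcal{A}$ absorbing $T_j$ remain unused). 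The $F$-factors of the $H[A_j\cup T_j]$ together with the $F$-factors of the $H[A]$ for the unused $A\in\mathcal{A}$ then form an $F$-factor of $H[U\cup W]$. Choosing $n_0=n_0(\eta,i,t)$ large validates every ``after forbidding $O(1)$ vertices'' estimate and all the concentration and union-bound steps.

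The main obstacle is the middle step: extracting, from the single hypothesis that \emph{every} pair of vertices is $(F,i,\eta)$-close, simultaneously a quantitatively adequate count of copies of $F$ and enough pairwise-disjoint connectors $Q_j$, and assembling them so that the per-$t$-set supply $c\,n^{m}$ of $T$-absorbers is large enough to survive the probabilistic deletion with the margins matching the constants $(\eta/2)^{t}$ and $(\eta/2)^{2t}$ in the statement; once the gadget and its $\Omega(n^{m})$ count are established, the probabilistic extraction and the final tiling are standard.
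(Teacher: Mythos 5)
Your proposal is correct and follows essentially the same strategy as the paper: define absorbing $m$-sets for each $t$-set $T$, show every $T$ has $\Omega(n^m)$ of them, select a random subfamily, discard intersecting or non-absorbing members by Chernoff/Markov/union-bound, and absorb $W$ greedily one $t$-set at a time while tiling the unused gadgets by their internal $F$-factors. The only (cosmetic) difference is the gadget: you take $m=it^{2}$ with a fresh copy $F'$ plus $t$ connectors $Q_j$, whereas the paper takes $m=it(t-1)$ with $t-1$ connectors and a copy of $F$ routed through the vertex $v_1\in T$, which saves a factor and gives the cleaner count $(\eta/2)^t\binom{n}{m}$ rather than your roughly $(\eta/2)^{t+1}/t$; this only perturbs the explicit numerical constants and does not affect the argument.
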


Note that in the above lemma $H$ and $F$ are not necessarily $k$-graphs, but we only consider $k$-graphs in this paper.
When we say that $H$ has an almost $F$-factor $\mathcal{T}$, we mean that $\mathcal{T}$ is a set of vertex-disjoint copies of $F$ in $H$ such that  $|V(H) \setminus V(\mathcal{T})| < \eps |V(H)|$ for some small $\eps >0 $.
Equipped with the absorption lemma, we can break down the task of finding an $F$-factor in large hypergraphs $H$ into the following algorithm.
\begin{alg}\hfill
\begin{itemize}
\item[1.] Remove a small set $\mathcal{T}_1$ of vertex-disjoint copies of $F$ from $H$ such that the resultant graph $H_1 = H [V\setminus V(\mathcal{T}_1)]$ is $(F,i,\eta)$-closed for some integer $i$ and constant $\eta>0$.
\item[2.] Find a vertex set $U \subseteq V(H_1)$ satisfying the conditions of the absorption lemma. Set $H_2 = H_1[V(H_1) \setminus U]$.
\item[3.] Show that $H_2$ contains an almost $F$-factor, i.e. a set  $\mathcal{T}_2$ of vertex-disjoint copies of $F$ such that  $|V(H_2) \setminus V(\mathcal{T}_2)| < \eps |V(H_2)|$ for small $\eps >0 $.
\item[4.] Set $W = V(H_2) \setminus V(\mathcal{T}_2)$.
Since $H_1[U \cup W]$ contains an $F$-factor~$\mathcal{T}_3$ by the choice of~$U$, $\mathcal{T}_1 \cup \mathcal{T}_2 \cup \mathcal{T}_3$ is an $F$-factor in $H$.
\end{itemize}
\end{alg}
We now apply the algorithm to various $k$-graphs $F$.
We would like to point out that Steps 1 and 3 of the algorithm require most of the work.

A $k$-graph $H$ is \emph{$k$-partite} with partition $V_1,\dots,V_k$, if $V = V_1 \cup  \dots \cup V_k$ and every edge intersects every $V_i$ in exactly one vertex.
We denote by $K_k^k(m_1, \dots,m_k)$ the complete $k$-partite $k$-graph with parts of sizes $m_1, \dots, m_k$.
If $m = m_i$ for all $1 \le i \le m_k$, we simply write $K_k^k(m)$.
Clearly, $t^k_l(n,K^k_k(m)) \ge t^k_l(n,K^k_k) = t^k_l(n,k)$.
As a simple application of the absorption lemma, we show that $t^k_1(n,K^k_k(m))$ equals $t^k_1(n,k)$ asymptotically for integers $k = 3,4$ and $m\ge 1$. 
\begin{thm} \label{thm:k-partite}
For integers $n,m \ge 1$ and $k = 3,4$ with $km |n$,
\begin{align*}
	t^k_1(n,K^k_k(m)) = t^k_1(n,k)+o(n^{k-1}) = 
\begin{cases}
\left(\frac59 +o(1)\right) \binom{n}2 & \textrm{if $k=3$}\\
\left( \frac{37}{64}+o(1)\right) \binom{n}3 & \textrm{if $k=4$}.
\end{cases}
\end{align*}
\end{thm}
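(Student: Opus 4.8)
The lower bound is the one already recorded in the excerpt: a copy of $K_k^k(m)$ contains a perfect matching on its $km$ vertices, so a $K_k^k(m)$-factor of $H$ yields a perfect matching of $H$, whence $t^k_1(n,K^k_k(m))\ge t^k_1(n,k)$. It therefore remains to prove the companion bound $t^k_1(n,K^k_k(m))\le t^k_1(n,k)+o(n^{k-1})$; plugging the known asymptotic values of $t^k_1(n,k)$ for $k=3,4$ into both inequalities then yields the displayed constants $5/9$ and $37/64$. So the plan is: fix $m\ge1$, $k\in\{3,4\}$ and a small $\epsilon>0$, write $t=km$ (the order of $F:=K_k^k(m)$) and let $c_k\in\{5/9,\,37/64\}$; I will show that every $k$-graph $H$ on $n$ vertices with $n$ large, $t\mid n$ and $\delta_1(H)\ge(c_k+\epsilon)\binom{n}{k-1}$ contains an $F$-factor, by running the four-step algorithm with this $F$.

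\emph{Step 1 (closing).} The key point is that $H$ is already $(F,i,\eta)$-closed for some fixed $i=i(k,m)$ and $\eta=\eta(\epsilon,k,m)>0$, so that $\mathcal T_1=\emptyset$ and $H_1=H$. Here is where $c_k>1/2$ enters: for any two vertices $x,y$, the links $N(x),N(y)\subseteq\binom{V}{k-1}$ each have at least $(c_k+\epsilon)\binom{n}{k-1}$ members, so the $(k-1)$-graph $G_{x,y}$ consisting of all $(k-1)$-sets $T$ with $T\cap\{x,y\}=\emptyset$ and $T\cup x,\,T\cup y\in E(H)$ has at least $(2c_k-1)\binom{n}{k-1}=\Omega(n^{k-1})$ edges. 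Since $K_{k-1}^{k-1}(m)$ is $(k-1)$-partite, Erd\H{o}s' theorem on Tur\'{a}n densities of $l$-partite $l$-graphs together with supersaturation gives $\Omega(n^{(k-1)m})$ copies of $K_{k-1}^{k-1}(m)$ inside $G_{x,y}$; any such copy, used as the parts $2,\dots,k$ of a putative copy of $F$, can be completed in $H$ to an honest copy of $F$ in which $x$ and $y$ occupy interchangeable positions (they already join all transversals of the copy, by the definition of $G_{x,y}$). A counting/averaging argument over these copies --- using that $\delta_1(H)=\Omega(n^{k-1})$ forces $e(H)=\Omega(n^k)$ and hence $\Omega(n^{km})$ copies of $F$ in $H$ --- should then produce the required $\eta n^{it-1}$ witnessing sets $S$, uniformly over all pairs $x,y$.

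\emph{Steps 2--4.} With $H_1=H$ closed, I apply the Absorption lemma (Lemma~\ref{lma:absorptionlemma}) to obtain an absorbing set $U$ with $|U|=o(n)$ and $|U|\in t\mathbb{Z}$, and set $H_2=H[V\setminus U]$, $n'=|H_2|$; removing $U$ costs each surviving vertex at most $|U|\binom{n}{k-2}=o(n^{k-1})$ in degree, so $\delta_1(H_2)\ge(c_k+\epsilon/2)\binom{n'}{k-1}$. For Step 3 I produce an almost $F$-factor of $H_2$ by the weak (codegree-free) regularity lemma: form the reduced $k$-graph $R$ on the $N$ clusters, with an edge for each $\epsilon'$-regular $k$-tuple of density at least $d$ (taking $\epsilon'\ll d\ll\epsilon$), so that after deleting a few bad clusters $\delta_1(R)\ge(c_k+\epsilon/3)\binom{N}{k-1}$; since this exceeds the perfect-matching threshold $t^k_1(N,k)$ for $k=3,4$, $R$ has a matching covering all but $O(1)$ of its clusters, and inside each regular $k$-tuple of that matching a greedy embedding (again exploiting that $F$ is $k$-partite) gives an $F$-packing missing only an $o(1)$-fraction of its vertices. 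The union $\mathcal T_2$ of all these copies leaves $W:=V(H_2)\setminus V(\mathcal T_2)$ with $|W|=o(n)$, which --- choosing the regularity parameters only after $\eta$ --- is at most $(\eta/2)^{2t}tn/32$ and lies in $t\mathbb{Z}$. By the defining property of $U$, $H_2[U\cup W]$ has an $F$-factor $\mathcal T_3$, and $\mathcal T_2\cup\mathcal T_3$ is an $F$-factor of $H$, completing the upper bound.

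\emph{Main obstacle.} I expect the real work to be the completion/averaging step inside Step 1: upgrading ``$G_{x,y}$ is dense, hence rich in copies of $K_{k-1}^{k-1}(m)$'' to ``many of those copies extend in $H$ to copies of $F$ witnessing closeness.'' The difficulty is that a lower bound on $\delta_1(H)$ gives no control on $(k-1)$-degrees, so the naive plan of intersecting the links of the $m-1$ extra vertices of the part containing $x$ and $y$ can yield an empty common neighbourhood; the remedy is to argue globally --- e.g.\ by weighting each copy of $K_{k-1}^{k-1}(m)$ by its number of $H$-extensions, or by enlarging $i$ and chaining closeness through a transitivity lemma. This issue, together with the appeal in Step 3 to the (only asymptotically known, and only for $k\le4$) value of $t^k_1(N,k)$, is exactly what restricts the theorem to $k\in\{3,4\}$.
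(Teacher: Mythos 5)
Your overall strategy --- lower bound from perfect matchings, then run the four-step absorption algorithm --- matches the paper's, and your Step~3 (weak regularity + almost-PM in the reduced graph + greedy embedding into regular tuples) is a legitimate alternative to the paper's direct citation of Khan's almost-$K_k^k(m)$-factor lemma. But Step~1 has a genuine gap, and it is exactly the obstacle you name at the end.

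You assert that $H_1=H$ is already $(F,i,\eta)$-closed, so that $\mathcal T_1=\emptyset$. This is not justified, and is not what the paper does. A bound on $\delta_1(H)$ alone gives no control over the $(k-1)$-degrees of the sets in $N(x)\cap N(y)$: all $(2c_k-1)\binom{n}{k-1}$ common links of a fixed pair $(x,y)$ could in principle have tiny degree, in which case none of your $\Omega(n^{(k-1)m})$ copies of $K_{k-1}^{k-1}(m)$ inside $G_{x,y}$ extend by even one further vertex into the ``$x,y$''-part, and the weighting-by-extensions idea sums to essentially nothing. The paper's fix has two ingredients you omit. First, it works only with $\alpha$-good $(k-1)$-sets (those with $\deg(S)\ge\alpha n$ and $x,y\in N(S)$): for an $\alpha$-good pair $(x,y)$, an averaging argument over such sets produces $\Omega(n)$ vertices $z$ with $|N(x)\cap N(y)\cap N(z)|\ge\alpha\binom{n}{k-1}$, and only then does supersaturation plus convexity yield $\Omega(n^{km-1})$ copies of $K_k^k(m,\dots,m,m+1)$ containing $x$ and $y$ in the large part, giving $(F,1,\eta)$-closeness (their Lemma~4.3). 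Second --- and this is the point your $\mathcal T_1=\emptyset$ claim cannot survive --- not every pair is $\alpha$-good. The paper shows only that $\alpha$-bad pairs are few, $O(\alpha^2 n^2/\gamma)$ of them, which still allows a bounded set of $O(\alpha^2 n)$ vertices to each lie in $\Omega(n)$ bad pairs; such a vertex can have $\widetilde N_{F,1,\eta'}(x)$ too small for your chaining idea to start. The paper removes these vertices first, covering them by a small collection $\mathcal T_1$ of $K_k^k(m)$-copies (using Proposition~4.4 to extend greedily), and only then, in $H_1=H\setminus V(\mathcal T_1)$, does every vertex have $|\widetilde N_{F,1,\eta'}(x)|>2|H_1|/3$, so that Propositions~2.1 and~2.2 upgrade pairwise $(F,1,\cdot)$-closeness to $H_1$ being $(F,2,\eta)$-closed. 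Your ``transitivity lemma'' remedy is the right instinct, but it needs both the $\alpha$-good filtering to make any pair ever close, and the removal of bad vertices to make the chaining uniform; without the removal step, the argument simply fails for the bad vertices.

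For Step~3 you should also note you need $|W|\in t\mathbb Z$; as in the paper this is arranged automatically because $t\mid n$, $t\mid |V(\mathcal T_1)|$, $t\mid |U|$ (the absorbing set is a union of $m$-sets with $t\mid m$), and $t\mid|V(\mathcal T_2)|$.
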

For $m=1$, H\'{a}n, Person and Schacht~\cite{MR2496914} evaluated $t^3_1(n,3)$ asymptotically.
K\"{u}hn, Osthus and Treglown~\cite{kuhn2010matchings} and independently Khan~\cite{2011arXiv1101.5830K} determined the exact value of $t^3_1(n,3)$ for large~$n$.
Also, Khan~\cite{2011arXiv1101.5675K} evaluated $t^4_1(n,4)$ exactly for large~$n$.

Recall that $t^k(n,t) = t^k_{k-1}(n,K^k_t)$.
For $t = k+1$, we give bounds on $t^k(n,k+1)$ for $k \ge 4$.
\begin{thm} \label{thm:t^k(n,k+1)}
Given an integer $k \ge 4$ and a constant $\gamma >0$, there exists an integer $n_0 = n_0(k, \gamma)$ such that for all $n \ge n_0$ with $(k+1)|n$
\begin{align*}
	t^k(n,k+1) \le 
	\left(1- \frac{k+\mathbf{1}_{k,\textrm{odd}}}{2k^2}+ \gamma \right)n,
\end{align*}
where $\mathbf{1}_{k,\textrm{odd}}=1$ if $k$ is odd and $\mathbf{1}_{k,\textrm{odd}}=0$ otherwise.
Moreover, $t^k(n,k+1) \ge 2n/3$ for even integers $k\ge 4$.
\end{thm}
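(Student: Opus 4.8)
The plan is to derive the upper bound from the absorption algorithm stated above, with $F=K^k_{k+1}$ and $t=k+1$, and to obtain the lower bound from an explicit divisibility construction. For the upper bound, write $c=(k+\mathbf{1}_{k,\textrm{odd}})/(2k^2)$ and assume $\delta_{k-1}(H)\ge(1-c+\gamma)n$ with $n$ large and $(k+1)\mid n$. Step~1: I would show that deleting a family $\mathcal{T}_1$ of at most $o(n)$ disjoint copies of $K^k_{k+1}$ leaves a $k$-graph $H_1$ that is $(K^k_{k+1},2,\eta)$-closed for some $\eta=\eta(k,\gamma)>0$. The core is a counting estimate: since $c$ is small the host $H$ is dense and rich in copies of $K^k_{k+1}$, and by building a spanning clique one vertex at a time one checks that all but $o(n)$ vertices $x$ are $(K^k_{k+1},1,\eta)$-close to all but $o(n)$ vertices $y$; in particular every such $x$ is $1$-close to more than $n/2$ vertices, so the ``closeness graph'' on the good vertices has diameter at most two, which upgrades $1$-closeness to $2$-closeness between every pair. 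The few remaining bad vertices are absorbed into $\mathcal{T}_1$ by greedily threading a copy of $K^k_{k+1}$ through each. Step~2 is then just Lemma~\ref{lma:absorptionlemma}, producing the absorbing set $U$; put $H_2=H_1[V(H_1)\setminus U]$, which still has $|H_2|\ge(1-o(1))n$ and $\delta_{k-1}(H_2)\ge(1-c+\gamma/2)|H_2|$.

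Step~3 --- finding in $H_2$ an almost $K^k_{k+1}$-factor leaving few enough vertices (at most $(\eta/2)^{2t}tn/32$) for $U$ to swallow --- is where essentially all of the work lies and is, I expect, the main obstacle, both in execution and in pinning down $c$. My plan is to pass to a bounded reduced $k$-graph $R$ by (weak) hypergraph regularity: split $V(H_2)$ into equal clusters and let $R$ record the $\epsilon$-regular $k$-tuples of clusters of density at least $d$, so that $\delta_{k-1}(R)\ge(1-c-o(1))|R|$. A clique $K^k_{k+1}$ of $R$ lifts --- via a counting lemma and greedy removal of copies from the super-regular blow-up --- to an almost $K^k_{k+1}$-factor of the union of the corresponding $k+1$ clusters, so it suffices to find a $K^k_{k+1}$-tiling of $R$ covering a $(1-o(1))$-proportion of clusters; for that a \emph{perfect fractional} $K^k_{k+1}$-tiling of $R$, rounded at cluster scale, is enough. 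The bound $1-c=1-(k+\mathbf{1}_{k,\textrm{odd}})/(2k^2)$ is the codegree guarantee under which I would establish such a fractional tiling: by LP duality an obstruction is a light dual weighting, and analysing the worst dual configuration yields the stated threshold, the term $\mathbf{1}_{k,\textrm{odd}}$ reflecting a parity obstruction there (an unpaired index when $k$ is odd). With $\mathcal{T}_2$ in hand, Step~4 (gluing $\mathcal{T}_1$, $\mathcal{T}_2$ and the tiling absorbed by $U$) is formal. Note this only shows that $1-c$ suffices, which need not be the true threshold.

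For the lower bound with $k\ge4$ even, I would construct a $k$-graph $H$ with $\delta_{k-1}(H)\ge 2n/3-O(1)$ and no $K^k_{k+1}$-factor by a divisibility (not space) barrier. Fix a labelling $\psi\colon V\to A$ into a small abelian group $A$ with the colour classes $\psi^{-1}(a)$ as equal as possible, and declare a $k$-set $e$ to be an edge iff $\sum_{v\in e}\psi(v)$ avoids a fixed subset of $A$ of density $1/3$, the group $A$ and the forbidden set chosen according to the residue of $k$ modulo a small number. Since membership of $e$ depends only on its $\psi$-sum, for every $(k-1)$-set the non-completions lie in colour classes of total size at most $n/3+O(1)$, so $\delta_{k-1}(H)\ge 2n/3-O(1)$. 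A short computation with the clique condition shows that every copy of $K^k_{k+1}$ in $H$ has its colour distribution constrained by a fixed congruence --- and here the evenness of $k$ enters, keeping that congruence non-degenerate, which is why the odd case is left open --- so that some fixed integer linear combination of the colour-class intersection sizes of a copy is the same modulo a fixed integer for every copy of $K^k_{k+1}$. Choosing $n$ and the individual class sizes (summing to $n$) so that this combination takes the wrong value on all of $V$ then precludes a $K^k_{k+1}$-factor, giving $t^k(n,k+1)\ge 2n/3$. Verifying the forced-congruence claim uniformly over all even $k$, and reading off the sharp class sizes, is the delicate point here.
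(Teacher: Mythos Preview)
Your overall framework (absorption + almost-tiling + lower bound by divisibility barrier) matches the paper, but you have the roles of Steps~1 and~3 exactly reversed, and this is a genuine gap.

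In the paper the value $c=(k+\mathbf{1}_{k,\textrm{odd}})/(2k^2)$ is \emph{not} the almost-tiling threshold; it is the threshold at which one can prove $H$ is $(K^k_{k+1},1,\eta)$-closed directly, with \emph{no} vertex deletion. The argument (Lemma~\ref{lma:K^k_k+1close}) is this: for any $x,y$, the $(k-1)$-graph $G$ on $V\setminus\{x,y\}$ with edges $N(x)\cap N(y)$ has $\delta_{k-2}(G)\ge(1-2c+2\gamma)n$, so it contains many copies $T$ of $K^{k-1}_k$. For $u\notin T$, form the auxiliary $2$-graph on $T$ whose edges are the pairs $v_iv_j$ with $u\in N^G(T\setminus\{v_i,v_j\})$; if this graph has more than $\lfloor k(k-2)/2\rfloor$ edges it has a vertex of degree $k-1$, which means $(T\setminus v_i)\cup u$ is again a $K^{k-1}_k$. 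Counting gives $\ge\bigl(\tfrac{k+\mathbf{1}_{k,\textrm{odd}}}{2k^2}+(k-1)\gamma\bigr)n$ such $u$ for some $i$, and intersecting with $N^H(T\setminus v_i)$ leaves $\ge k\gamma n$ choices of $z$ for which $\{x,z\}\cup T\setminus v_i$ and $\{y,z\}\cup T\setminus v_i$ are both $K^k_{k+1}$. That computation with $\lfloor k(k-2)/2\rfloor$ is precisely where the parity term $\mathbf{1}_{k,\textrm{odd}}$ and the constant $c$ come from. Your Step~1 sketch (``$c$ is small, so $H$ is dense, so almost every pair is $1$-close'') does not engage with this and would not pin down $c$.

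Conversely, the almost-tiling step in the paper is the \emph{easy} one and needs only $\delta_{k-1}\ge\bigl(1-\tfrac{1}{k+1}\bigr)n$, which is strictly weaker than $(1-c)n$. It is done by a short Fischer-type weight-shifting argument (Lemma~\ref{lma:approximate}): partition $V$ into $t$-sets, weight each part by a concave function of the size of its largest clique, and if too many parts have clique size $i<t$ use the K\"onig--Egerv\'ary theorem to reroute vertices and increase the weight. No regularity, no LP duality, and the leftover is only $t^2(t-1)$ vertices. Your proposed regularity + fractional-tiling route is much heavier machinery for a weaker conclusion, and there is no reason the LP analysis would output the specific value $(k+\mathbf{1}_{k,\textrm{odd}})/(2k^2)$ --- that number simply is not the fractional-tiling threshold.

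For the lower bound, the paper's construction is a concrete instance of your abelian-group scheme and considerably simpler: take three parts $V_1,V_2,V_3$ of nearly equal size with $|V_1|\not\equiv|V_2|\pmod 2$, and let $S$ be an edge iff $|S\cap V_i|$ is odd for some $i$. Then $\delta_{k-1}(H)\ge 2n/3-1$, and a two-line parity check (using $k+1$ odd when $k$ is even) shows every $K^k_{k+1}$ meets each $V_i$ in an odd number of vertices, which is incompatible with $|V_1|\not\equiv|V_2|\pmod 2$. Your general labelling would work too, but the ``delicate point'' you flag disappears in this explicit case.
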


For a general $t >k \ge 3$, we bound $t^k(n, t)$ from above.
First, we define the function $\beta(k,t)$ for $3 \le k < t$.
For $k = 3 <t$, we define
\begin{align}
\beta(3,t) = \frac{2}{t^2-3t+4}. \nonumber
\end{align}
Note that $\beta(3,4) = 1/4$.
Given integers $k,t, l$ with $4 \le k <t$ and $(k-2)/2 \le l \le (t-2)/2$, define
\begin{align}
\beta(k,t,l) & =  \min \left\{ \frac{2}{\binom{t}{k-1}+ \binom{l+1}{k-1}+\binom{t-l-1}{k-1}} ,
 \frac{1}{\binom{t-1}{k-1} +\binom{l}{k-1}},
\frac{1}{2\binom{2l+1}{k-1}} \right\}, \nonumber\\
\beta(k,t) & = \max \{\beta(k,t,l) : (k-2)/2 \le l \le (t-2)/2\}, \nonumber
\end{align}
where $\binom{x}{y} = 0 $ if $x <y$.

\begin{thm} \label{thm:t^k(n,t)}
Given integers $t > k \ge 3$ and a constant $\gamma>0$, there exists an integer $n_0 = n_0(k, t,\gamma)$ such that for all $n \ge n_0$ with $t|n$
\begin{align}
	t^k(n, t) 
\le \left( 1- \beta(k,t) + \gamma \right)  n.
\nonumber
\end{align}
In particular, $t^3(n, 4) \le (3/4 + \gamma) n$.
\end{thm} 
Pikhurko~\cite{MR2438870} showed that $t^3(n, 4) \ge 3n/4 -2$, so $t^3(n, 4) =(3/4 +o(1))n$.
After the submission of the paper, we have learnt that Keevash and Mycroft~\cite{keevash2011geometric} determined the exact value of $t^3(n,4)$ for large $n$ using a different method.

For $t = k+1 \ge 4$, we have $\beta(k,k+1) = \beta(k,k+1, (k-1)/2)$ or $\beta(k,k+1) = \beta(k,k+1, (k-2)/2)$.
Moreover, $\beta(k,k+1) \le (k+\mathbf{1}_{k,\textrm{odd}})/2k^2$ unless $k \in \{3,4,6\}$.
Hence, Theorem~\ref{thm:t^k(n,k+1)} is stronger than Theorem~\ref{thm:t^k(n,t)} unless $k = 4$ or~$6$.
By evaluating $\beta(k,t)$ for $4 \le k <t-1$, Theorem~\ref{thm:t^k(n,t)} gives the following corollary.

\begin{cor} \label{cor:main}
Given an integer $t-1> k \ge 4$ and a constant $\gamma >0$, there exists an integer $n_0 = n_0(k,t, \gamma)$ such that, for all $n \ge n_0$ with $t|n$, the following statements hold:
\begin{itemize}
\item[\rm (i)] If $5 \le k+1  < t < 3k/2 - 1$, then 
\begin{align*}
t^k(n,t) \le \left( 1- 2 \binom{t}{k-1}^{-1} + \gamma\right) n .
\end{align*}
\item[\rm (ii)] If $k \ge 6$ and $2t \ge 3(k-1) + \sqrt{5k^2 -22k+25}$, then 
\begin{align*}
	t^k(n,t) \le \left( 1- \binom{t-1}{k-1}^{-1} + \gamma \right) n.
\end{align*}
\end{itemize}
\end{cor}

On the other hand, we know less about the lower bounds on $t^k(n,t)$.
For $t > k \ge 3$, we know that $t^k(n, t) \ge \left(1- (k-1)/t  \right)n$.
Indeed, this is true by considering the $k$-graph $H$ of order $n$ such that there exists a vertex subset $W$ of size $|W| =(t-k+1)n/t-1$ and every edge in $H$ meets~$W$.
For $k=3$, we are able to give a better lower bound on $t^3(n, t)$.

\begin{prp} \label{prp:lowerboundt^3(n,t)}
For every integer $t \ge 4$, there exists an integer $n_0 = n_0(t)$ such that for all $n \ge n_0$ with $t | n$, we have $t^3(n, t) \ge (1 -  193(t-1)^{-2} \log (t-1)  )  n$.
\end{prp}

Since Theorem~\ref{thm:t^k(n,t)} implies that $t^3(n, t) \le (1 -  2/(t^2 - 3t+4 )+ \gamma ) n$ for large~$n$, we ask the following question.
\begin{qns}
How does $t^k(n, t) $ behave for $t \ge k \ge 3$ as $n \rightarrow \infty$?
Is $t^3(n, t) \sim \left(1 -  C \log{t} /t^{2} \right) n$ or $t^3(n, t) \sim \left(1 -  C  /t^{2} \right) n$ for large $n$ and some absolute constant $C$ independent of $t$ and $n$?
\end{qns}

As an auxiliary result, we also prove the following Ramsey result.
\begin{thm} \label{thm:R(B_l,K_t)}
Let $c, \eps >0$ be constants.
Let $t\ge 3$ and $\lambda \ge 1$ be integers such that $(\lambda t)^{2 \eps} (\log \lambda t )^{1-\eps} \le c t^2$.
Then, there exist a constant $c_2 = c_2(\eps, c)$ such that $1/96 \le c_2$ and
\begin{align}
\frac{\lambda t^2}{96 \log(\lambda t)} \le R(B_{\lambda}, K^3_{t}) 
\le \frac{c_2\lambda t^2}{ \log(\lambda t)}, \nonumber
\end{align}
where $B_\lambda = K_3^3(1,1,\lambda+1)$.
\end{thm}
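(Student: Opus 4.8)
The plan is to translate the Ramsey statement into a question about independent sets in sparse $3$-graphs. A $2$-colouring of $K_N^3$ contains no red copy of $B_\lambda=K_3^3(1,1,\lambda+1)$ precisely when every pair of vertices lies in at most $\lambda$ red edges, i.e.\ when the red $3$-graph $G$ has maximum codegree at most $\lambda$; and it contains no blue $K_t^3$ precisely when $G$ has no independent set of size $t$ (a vertex set spanning no edge of $G$). Hence $R(B_\lambda,K_t^3)>N$ if and only if there is a $3$-graph $G$ on $N$ vertices with $\delta$-type bound $\Delta_2(G)\le\lambda$ (maximum codegree) and $\alpha(G)\le t-1$. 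The task thus splits into: (a) an \emph{upper bound}, showing every such $G$ on $N\gtrsim\lambda t^2/\log(\lambda t)$ vertices has $\alpha(G)\ge t$; and (b) a \emph{lower bound}, constructing such a $G$ on $N\asymp\lambda t^2/\log(\lambda t)$ vertices with $\alpha(G)\le t-1$.

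For (a), summing codegrees over pairs gives $3e(G)\le\lambda\binom N2$, so $G$ has at most $m:=\lambda\binom N2/3$ edges and average degree $d:=3m/N\le\lambda(N-1)/2$. Moreover the number of pairs of edges sharing two vertices is at most $\binom N2\binom\lambda2=O(N^2\lambda^2)=o(Nd^2)$, so $G$ is \emph{uncrowded}. I would then invoke the standard independence bound for uncrowded $3$-graphs (of Ajtai--Koml\'os--Pintz--Spencer--Szemer\'edi / Duke--Lefmann--R\"odl type): such a $G$ satisfies $\alpha(G)\gtrsim (N/\sqrt d)\sqrt{\log d}$. Substituting $d\le\lambda N/2$ yields $\alpha(G)\gtrsim\sqrt{N/\lambda}\,\sqrt{\log(\lambda N)}$, and with $N=c_2\lambda t^2/\log(\lambda t)$ one checks $\log(\lambda N)\asymp\log(\lambda t)$, so $\alpha(G)\gtrsim\sqrt{c_2}\,t\ge t$ for a suitable absolute constant $c_2$. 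The hypothesis $(\lambda t)^{2\epsilon}(\log\lambda t)^{1-\epsilon}\le ct^2$ is what keeps $d$ in the range where the uncrowded bound applies and where $\log d$ and $\log(\lambda t)$ are comparable; degenerate cases (e.g.\ $d$ small, so $m$ tiny) are handled by the elementary bound $\alpha(G)\gtrsim (N^3/m)^{1/2}$.

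For (b), I would use a probabilistic construction. Take $G\sim G^{(3)}(N,p)$ with $p\asymp\lambda/N$. A first-moment computation shows the expected number of $t$-sets spanning no edge is at most $(eN/t)^t e^{-p\binom t3}<1$ provided $N<c_1\lambda t^2/\log(\lambda t)$ for a small absolute constant $c_1$. When $\lambda\gg\log N$, a Chernoff bound shows that with high probability only $o(N)$ pairs have codegree exceeding $\lambda$, so deleting a vertex from each leaves a $3$-graph on $(1-o(1))N$ vertices with maximum codegree at most $\lambda$ and still no independent $t$-set. For the remaining, and most delicate, regime of small $\lambda$ (in particular $\lambda=1$, where $G$ must essentially be a near-optimal partial Steiner system) Chernoff concentration is too weak to control the maximum codegree, and I would instead appeal to the semi-random (R\"odl nibble) construction exhibiting the tightness of the uncrowded independence bound of step (a): there is a $3$-graph on $\Theta(\lambda t^2/\log(\lambda t))$ vertices with maximum codegree at most $\lambda$ and independence number below $t$. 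Combining the two regimes gives the lower bound with an absolute constant $c_1\le c_2$.

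The main obstacle is the small-$\lambda$ part of the lower bound: producing a $3$-graph whose maximum codegree is \emph{genuinely} bounded by $\lambda$ (not merely typically so) while retaining $\Theta(\lambda t^2/\log(\lambda t))$ vertices requires the semi-random method rather than a plain random $3$-graph, and one must verify that the nibble output simultaneously enjoys enough pseudorandomness to force $\alpha<t$. A secondary technical point is clarifying the exact role of the hypothesis $(\lambda t)^{2\epsilon}(\log\lambda t)^{1-\epsilon}\le ct^2$ and confirming that the constants in the uncrowded independence bound combine to give $c_1,c_2$ that are truly absolute, i.e.\ independent of $c,\epsilon,t,\lambda$.
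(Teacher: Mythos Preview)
Your upper bound argument is essentially the paper's: both reduce to showing that a $3$-graph $H$ on $n=c_2\lambda t^2/\log(\lambda t)$ vertices with $\Delta_2(H)\le\lambda$ has $\alpha(H)\ge t$, and both invoke the Duke--Lefmann--R\"odl uncrowded independence bound. The paper sets $\tau=\sqrt{\lambda n}$, notes $\Delta_1(H)\le\tau^2$, and checks that the number of $2$-cycles is at most $\binom\lambda2\binom n2\le n\tau^{3-\epsilon}$; this last inequality is exactly where the hypothesis $(\lambda t)^{2\epsilon}(\log\lambda t)^{1-\epsilon}\le ct^2$ enters, so your remark about its role is correct but could be made sharper.

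Your lower bound approach differs from the paper's and is, as you anticipated, more awkward. You split into two regimes: for $\lambda\gg\log N$ use $G^{(3)}(N,p)$ plus vertex deletion, and for small $\lambda$ appeal to a nibble construction whose independence-number analysis you leave unverified. The paper avoids this case split entirely by a single \emph{constrained random greedy} process (adapted from Kostochka--Mubayi--R\"odl--Tetali): order the $3$-subsets of $[n]$ uniformly at random and add each in turn provided it does not push any pair's codegree above $\lambda$. By construction the output is a partial $2$-$(n,3,\lambda)$ design, so $\Delta_2\le\lambda$ holds \emph{deterministically} for every $\lambda\ge1$; the only probabilistic work is a union bound showing that no $t$-set remains independent, which boils down to estimating, for a fixed $t$-set $X$, the probability that the first $\Theta\bigl(\lambda\binom t3/\binom{t-2}1\bigr)$ accepted edges meeting $X$ in at least two vertices all avoid $\binom X3$. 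This unified construction is both simpler and handles the delicate $\lambda=1$ (partial Steiner system) case without extra machinery; your nibble route could in principle be made to work, but carrying out the independence analysis on the nibble output is genuinely harder than what the paper does.
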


\section{Layout of the paper and preliminaries} \label{sec:prelim}

The paper is structured as follows.
First, we set up some basic notation in Section~\ref{subsec:notation}.
In Section~\ref{sec:close}, we study some properties of being $(F,i ,\eta)$-close.
Next, we look at the Ramsey number $R(B_{\lambda},K_t^3)$, Theorem~\ref{thm:R(B_l,K_t)}, in Section~\ref{subsec:Ramsey}.
In Section~\ref{sec:absorptionlemma}, we prove the absorption lemma, Lemma~\ref{lma:absorptionlemma}. 
Then, we apply the absorption lemma to prove Theorem~\ref{thm:k-partite}, that is evaluating $t^k_1(n,K_k^k(m))$ asymptotically for integers $k=3,4$ and $m \ge 1$ in Section~\ref{sec:K(m)factors}. 
The remainder of the paper is focused on bounding $t^k(n,t)$.
In Section~\ref{sec:construction}, we bound $t^k(n,t)$ from below.
We give a lower bound on $t^k(n,k+1)$ for $k \ge 4$ even, followed by a lower bound on $t^3(n,t)$, which proves Proposition~\ref{prp:lowerboundt^3(n,t)}.
In Section~\ref{sec:t^k(n,k+1)}, we bound $t^k(n,k+1)$ from above proving Theorem~\ref{thm:t^k(n,k+1)}.
With further work, we bound $t^k(n,t)$ from above, which proves Theorem~\ref{thm:t^k(n,t)} in Sections~\ref{sec:connect}.
Finally in Section~\ref{sec:remarks}, we prove Corollary~\ref{cor:main}.

\subsection{Notation} \label{subsec:notation}
For $a \in \mathbb{N}$, we refer to the set $\{1, \dots, a\}$ as $[a]$.

Throughout this paper, $H$ is assumed to be a $k$-graph $H$ of order~$n$.
The \emph{maximum $l$-degree} $\Delta_l(H)$ is simply the maximal $\deg (T)$ over all $l$-sets $T \subseteq V$.
Given an $l$-set $T \subseteq V$, the \emph{neighbourhood $N(T)$ of $T$} is the set of $(k-l)$-sets $S \subseteq V $ such that $T \cup S$ is an edge in~$H$.
Clearly, $\deg (T) = |N(T)|$.
Given an $s$-set $S \subseteq V$, define 
\begin{align*}
L(S) = 
\begin{cases}
V \setminus S & \textrm{if }s <k-1\\
\bigcap_{T \in \binom{S}{k-1}} N(T) & \textrm{if }s \ge k-1.
\end{cases}
\end{align*}
Note that if $S$ forms a $K_s^k$ in $H$, then $L(S)$ is precisely the set of vertices $v$ such that $S \cup v$ forms a $K_{s+1}^k$ in~$H$.

\subsection{$(F, i, \eta)$-close} \label{sec:close}

Let $H$ and $F$ be $k$-graphs on $n$ and $t$ vertices respectively.
Given an integer $i \ge 1$ and vertices $x,y \in V(H)$, we say that the vertex set $S \subseteq V(H)$ is an \emph{$(x,y)$-connector of length $i$ with respect to $F$} if $S \cap \{x,y\} = \emptyset$, $|S| = it -1$ and both $H[S \cup x]$ and $H[S \cup y]$ contain $F$-factors.
Therefore, $x$ and $y$ are $(F, i, \eta)$-close to each other if there exist at least $\eta n^{it-1}$ $(x,y)$-connectors of length~$i$ with respect to~$F$.
Given a vertex $x \in V(H)$, we denote by $\widetilde{N}_{F,i,\eta} (x)$ the set of vertices that are $(F,i, \eta)$-close to~$x$.
A subset $U \subseteq V$ is said to be \emph{$(F,i , \eta)$-closed in $H$} if every vertex in $U$ is $(F,i, \eta)$-close to all other vertices in~$U$.
This implies that $H$ is $(F,i , \eta)$-closed if $V(H)$ is $(F,i , \eta)$-closed in~$H$.
Given vertex sets $X,Y \subseteq V$, a triple $(x,y,S)$ is an \emph{$(X,Y)$-bridge of length~$i$ with respect to $F$} if $x \in X$, $y \in Y$ and $S$ is an $(x,y)$-connector of length $i$ with respect to $F$.
If $u \in X \cap Y$, then we say $(u,u, \emptyset)$ is an $(X,Y)$-bridge of length~$0$.

Next we study some basic properties of $(F,i,\eta)$-closeness.

\begin{prp} \label{prp:i-closedadditive}
Let $F$ be a hypergraph on $t$ vertices.
Let $i \ge 1$ be an integer and let $\eta, \varepsilon >0$ be constants.
Then there exist an integer $n_0 = n_0(t,i,\eta, \eps)$ and a constant $\eta_0 = \eta_0(t,i,\eta, \eps)>0$ satisfying the following:
Suppose that $H$ is a hypergraph of order $n \ge n_0$ and there exists a vertex $x \in V(H)$ with $| \widetilde{N}_{F,i, \eta}(x)| \ge \varepsilon n$.
Then, for all $0< \eta' \le \eta_0$, $\widetilde{N}_{F,i, \eta}(x) \subseteq  \widetilde{N}_{F,i+1, \eta'}(x)$.
\end{prp}

\begin{proof}
We may assume that $n_0$ is chosen to be sufficiently large.
Let $y \in \widetilde{N}_{F,i, \eta}(x)$ and $m = it-1$.
If $y$ is $(F,i+1, \eta')$-close to~$x$, then $y$ is $(F,i+1, \eta'')$-close to~$x$ for all $0 < \eta'' \le \eta'$.
Therefore, to prove the proposition, it is enough to show that $y$ is $(i+1,\eta_0)$-close to $x$ for some constant $\eta_0 >0$ depending only on $t,i,\eta, \eps$.
Since $y \in \widetilde{N}_{F,i, \eta}(x)$, there are at least $\eta n^m$ $(x,y)$-connectors $S$ of length~$i$ (with respect to~$F$).
Pick an $(x,y)$-connector $S$ of length~$i$.
Let $z \in \widetilde{N}_{F,i, \eta}(x) \setminus ( S \cup \{x,y\} )$.
There are at least $\eta n^m$ $(x,z)$-connectors $S'$ of length~$i$.
Moreover, the number of $S'$ containing a vertex in $S \cup y$ is at most $(m+1) n^{m-1} < \eta n^m/2$.
Hence, there are at least $\eta n^m/2$ $(x,z)$-connectors $S'$ (of length~$i$) with $S' \cap (S \cup y) = \emptyset$.
Since $H[S' \cup z]$ contains an $F$-factor, there is a $t$-set $T$ such that $z \in T \subseteq S' \cup z$ and $H[T]$ contains a copy of $F$.
By an averaging argument, the number of $t$-sets $T$, such that $T \cap (S \cup \{x, y\}) = \emptyset$ and $H[T]$ contains a copy of $F$, is at least 
\begin{align*}
\frac{\eta n^m/2}{n^{m-t+1}} \cdot \frac{\varepsilon n - m-2}{t} > \frac{\eta \varepsilon n^t}{4t}.
\end{align*}
Recall that $S$ is an $(x,y)$-connector of length $i$.
So $S \cup T$ is an $(x,y)$-connector of length $i+1$.
Note also that there are 
\begin{align*}
	\frac{ \eta \varepsilon n^t/(4t) \cdot \eta n^m/2}{\binom{m+t}{t}} = \frac{\eta^2 \eps}{8t \binom{m+t}t} n^{m+t}
\end{align*}
such choices for $S \cup T$.
Set $\eta_0 = \eta^2 \eps/(8t \binom{m+t}t)$.
Hence, $y$ is $(F,i+1,\eta_0)$-close to~$x$.
The proof of the proposition is completed.
\end{proof}

Let $x,y$ be distinct vertices in $V(H)$.
Let $X = \widetilde{N}_{F,i_X,\eta_X}(x)$ and $Y = \widetilde{N}_{F,i_Y,\eta_Y}(y)$.
In the next lemma, we show how $(X,Y)$-bridges are used to show that $x$ and $y$ are $(F, i, \eta)$-close to each other.

\begin{lma} \label{lma:bridge}
Let $F$ be a hypergraph on $t$ vertices.
Let $i_X,i_Y >0$ and $i \ge 0$ be integers and let $\eta_X, \eta_Y, \eta, \varepsilon >0$ be constants.
Then there exist an integer $n_0 = n_0(t,i,i_X,i_Y,\eta,\eta_X, \eta_Y, \eps)$ and a constant $\eta_0 = \eta_0(t,i,i_X,i_Y,\eta,\eta_X, \eta_Y, \eps)$ satisfying the following:
Suppose that $H$ is a hypergraph of order $n \ge n_0$.
Suppose that for distinct $x,y \in V(H)$, there are at least $\varepsilon n^{it+1}$ copies of $(X,Y)$-bridges of length~$i$ with respect to~$F$, where $X = \widetilde{N}_{F,i_X,\eta_X}(x)$ and $Y = \widetilde{N}_{F,i_Y,\eta_Y}(y)$.
Then, $x$ and $y$ are $(F,i_X+i_Y+i,\eta')$-close to each other for all $0< \eta' \le \eta_0$.
In particular, if $|X \cap Y| \ge \eps n$ (i.e. $i =0$), then $x$ and $y$ are $(F,i_X+i_Y,\eta')$-close to each other for all $0< \eta' \le \eta_0$.

Furthermore, if $X$ is $(F,i_X,\eta_X)$-closed and $Y$ is $(F,i_Y,\eta_Y)$-closed in $H$ and $|X|,|Y| \ge \varepsilon n$, then $X \cup Y$ is $(i_X+i_Y+i,\eta')$-closed in~$H$ for all $0< \eta' \le \eta_0$.
\end{lma}

\begin{proof}
We may assume that $n_0$ is chosen to be sufficiently large.
Let $i_0 = i_X+i_Y+i$, $m_0 = i_0t-1$, $m = it-1$, $m_X = it_X-1$ and $m_Y = it_Y-1$.
The number of $(X,Y)$-bridges $(x',y',S)$ of length $i$ (with respect to $F$) with $\{x,y\} \cap (S \cup \{x',y'\}) \ne \emptyset$ is at most $(m+2)n^{m+1} < \varepsilon n^{m+2}/2$.
Hence, the number of $(X,Y)$-bridges $(x',y',S)$ with $x' \in X \setminus (S \cup \{ x,y\})$ and $y' \in Y \setminus (S \cup \{ x,y\})$ is at least $\varepsilon n^{m+2}/2$.
Fix one such $(X,Y)$-bridge $(x',y',S)$.
Since $x' \in X \setminus x$, the number of $(x,x')$-connectors $S_X$ of length $i_X$ such that $S_{X} \cap (S \cup \{ x,x',y,y'\}) = \emptyset$ is at least 
\begin{align*}
\eta_X n^{m_X} - (m+4)   n^{m_X-1} \ge \eta_X n^{m_X}/2.
\end{align*}
Pick one such $S_{X}$.
Similarly, the number of $(y,y')$-connectors $S_Y$ of length $i_Y$ such that $S_{Y} \cap (S \cup S_X \cup \{ x,x',y,y'\}) = \emptyset$ is at least 
\begin{align*}
\eta_Y n^{m_Y} - (m_X+m+4)n^{m_Y-1} \ge \eta_{Y} n^{m_Y}/2.
\end{align*}
Pick one such $S_{Y}$.
Set $S_0 = S_{X} \cup S_{Y} \cup S \cup \{x',y'\}$.
Note that $S_0$ is an $(x,y)$-connector of length $i_0$.
Moreover, there are at least 
\begin{align}
	\frac{1}{\binom{m_0}{m,1,1,m_X,m_Y}} \times \frac{\varepsilon n^{m+2}}{2} \times \frac{\eta_{X} n^{m_X}}2  \times \frac{\eta_{Y} n^{m_Y}}2 = \eta_0' n^{m_0} \nonumber
\end{align}
distinct $S_0$, where $\eta_0' = \binom{m_0}{m,1,1,m_X,m_Y}^{-1} \eps \eta_X \eta_Y>0$.
So $x$ and $y$ are $(F,i_0,\eta')$-close to each other for all $0 < \eta' \le \eta_0'$.
By Proposition~\ref{prp:i-closedadditive}, choose $0 < \eta_0 \le \eta'_0$ sufficiently small such that the following two statements also holds:
If $X$ is $i_X$-closed and $|X| \ge \eps n$ then $X$ is $(F, i_0, \eta')$-closed in $H$ for all $0 < \eta' \le \eta_0$.
If $Y$ is $i_Y$-closed and $|Y| \ge \eps n$ then $Y$ is $(F, i_0, \eta')$-closed in $H$ for all $0 < \eta' \le \eta_0$.
Therefore, the lemma follows.
\end{proof}


\subsection{Ramsey number of 3-graphs} \label{subsec:Ramsey}

The \emph{Ramsey number} $R(S,T)$ of $k$-graphs $S$ and $T$ is the minimum integer $N$ such that if we edge-colour $K_N^k$ with colours red and blue then there exists a red monochromatic copy of~$S$ or a blue monochromatic copy of~$T$.
Given an integer $\lambda \ge 0$, let $B_{\lambda}$ be the 3-graph on vertex set $\{x,y,z_1, \dots, z_{\lambda+1} \}$ with edges $xyz_i$ for $1 \le i \le \lambda+1$.
In other words, $B_{\lambda} = K_3^3(1,1, \lambda+1)$.

First we bound $R(B_{\lambda}, K^3_{t})$ from below.
A \emph{partial $t$--$(n,k,\lambda)$ design} is a family $\mathcal{J}$ of $k$-sets in~$[n]$ such that every $t$-set~$T$ is contained in at most $\lambda$ $k$-sets in~$\mathcal{J}$.
Note that a partial $2$--$(n,3,\lambda)$ design does not contain a~$B_{\lambda}$.
We are going to construct a partial $t$--$(n,k,\lambda)$ design with a small independence number by modifying a construction of Kostochka, Mubayi, R{\"o}dl and Tetali~\cite{MR1848784}.
It should be noted that Grable, Phelps and R{\"o}dl~\cite{MR1337352} constructed $2$--$(n,k,\lambda)$ designs with small independence number when $n$ is an even power of a sufficient large prime.
Given integers $k \ge t$, we write $(k)_t$ to mean $k!/(k-t)!$.

\begin{prp} \label{prp:R(B_l,K_t)below} 
Let $k$, $t$, $\lambda$ and $x$ be positive integers with $x > k \ge 2t-1$, and $8(k)_t \binom{k-1}t \binom{k} t  \log (\lambda x) \ge (2 e)^{k-t}$.
Then there exists a partial $t$--$(n,k, \lambda)$ design $H$ with 
\begin{align}
n = \left\lceil \left( \frac{\lambda x^{k-1}}{ 8(k)_t\binom{k-1}t \binom{k}t  \log (\lambda x)}\right)^{1/(k-t)} \right\rceil
\label{eqn:nR(B_l,K_t)}
\end{align}
and the independence number of $H$ is less than~$x$.
\end{prp}

\begin{proof}
We consider the following constrained random process.
First we order all $k$-sets of $[n]$ at random: $E_1$, \dots, $E_{\binom{n}k}$.
Let $H_0$ be the empty graph on vertex set~$[n]$.
For $1 \le j \le \binom{n}k$, set $H_j = H_{j-1} \cup E_j$ if $H_{j-1} \cup E_j$ is a partial $t$--$(n,k, \lambda)$ design, otherwise set $H_j = H_{j-1}$.
Let $H = H_{\binom{n}k}$.
Our aim is to show that with positive probability the independence number of $H$ is less than~$x$.

Fix an $x$-set~$X$.
Let $B_X$ be the event that $X$ is an independent set in~$H$.
Given a $k$-set $T \in \binom{X}{k}$, an edge $E$ in $H$ is called a \emph{$T$-witness} if $E$ precedes $T$ in the ordering and $|E \cap T| \ge t$.
Thus, in order for $B_X$ to happen, each $T\in \binom{X}{k}$ has at least $\lambda$ $T$-witnesses.
Each edge $E$ in $H$ with $E \not \subseteq X$ can be a $T$-witness for at most $\binom{k-1}{t} \binom{x-t}{k-t}$ $k$-sets $T \subseteq X$.
Therefore, if $B_X$ happens, then there are at least
\begin{align}
	m = \frac{\lambda \binom{x}k}{\binom{k-1}{t} \binom{x-t}{k-t}} \nonumber
\end{align}
witnesses, where an edge $E$ in $H$ is a \emph{witness} if $E$ is a $T$-witness for some $T\in \binom{X}{k}$.
Note that if $E$ is a witnesses, then $|X \cap E| \ge t$.

For $j \ge 1$, let $A_j = A_{X,j}$ denote the event that $H[X]$ is empty and there are at least $j$ edges $E$ in $H$ such that $t \le |E \cap X| < k$, i.e. there are at least $j$ witnesses in $H$.
Note that $B_X$ implies $A_m$. 
Our task is to bound the probability of $A_m$ from above by $\binom{n}{x}^{-1}$.
Let $E_{l_j}$ be the $j$th witness in the ordering.
Let $H^j$ be the $k$-graph $H_{l_j-1}$.
So $H^j$ has exactly $j-1$ witnesses while the next graph in the process has $j$ witnesses.
For $1 \le j \le m$, let $\mathcal{S}_j$ be the set of witnesses $E$ such that $H^{j} \cup E$ is a partial $t$-$(n,k,\lambda)$ design.
Hence, for all $S \in \mathcal{S}$ with $|X \cap S| \ge t$, each $U \in \binom{S}{t}$ is contained in fewer than $\lambda$ edges $E \in H^j$.
Let $\mathcal{T}_j$ be the set of $T \in \binom{X}{k}$ such that $T$ has fewer than $\lambda$ witnesses in $H^j$.
Clearly, $\mathcal{T}_j \subseteq \mathcal{S}_j\cap \binom{X}k$.
Recall that an edge $E$ with $E \not\subseteq X$ can be a $T$-witness for at most $\binom{k-1}{t} \binom{x-t}{k-t}$ $k$-sets $T \subseteq X$.
Consequently, for $1 \le j \le \lceil m/2 \rceil$, 
\begin{align}
	\left|\mathcal{S}_j\cap \binom{X}k\right|  & 
	\ge | \mathcal{T}_j|
> 	\binom{x}k - \frac{j-1}{\lambda}\binom{k-1}{t} \binom{x-t}{k-t} \nonumber \\
	& \ge \binom{x}k - \frac{\lceil m/2 \rceil -1}{\lambda}\binom{k-1}{t} \binom{x-t}{k-t} \ge \frac1{2} \binom{x}k. \label{eqn:SjXk}
\end{align}
Since $A_m \subseteq A_{m-1} \subseteq \dots \subseteq A_1$, we have
\begin{align*}
	\mathbb{P}(A_{m}) = \mathbb{P}(A_1)\mathbb{P}(A_2|A_1) \dots \mathbb{P}(A_m|A_{m-1}).
\end{align*}
Note that the events $A_1$ corresponds to a random choice from the set $\mathcal{S}_1$ with the result that the chosen set belongs to $\mathcal{S}_1 \setminus \binom{X}k$.
Similarly, for $j=2,\dots,m-1$, the event $A_{j}|A_{j-1}$ corresponds to a random choice from the set $\mathcal{S}_j$ with the result that the chosen set belongs to $\mathcal{S}_j \setminus \binom{X}k$.
Since $|\mathcal{S}_j| \le \binom{x}{t} \binom{n}{k-t}$ for all $1 \le j \le \lceil m/2 \rceil $, we have
\begin{align}
	\mathbb{P}(A_1) = \frac{|\mathcal{S}_1| - \binom{x}k}{|\mathcal{S}_1|} \le 1- \frac{\binom{x}k}{\binom{x}t \binom{n}{k-t}} < 1- \frac{\binom{x}k}{2\binom{x}t \binom{n}{k-t}}. \nonumber
\end{align}
Furthermore, for $1 < j \le \lceil m/2 \rceil $,
\begin{align}
	\mathbb{P}(A_j |A_{j-1}) = \frac{|\mathcal{S}_j \setminus \binom{X}k|}{|\mathcal{S}_j|} 
= 1- \frac{|\mathcal{S}_j\cap \binom{X}k|}{|\mathcal{S}_j|} 
\overset{\eqref{eqn:SjXk}}{\le} 1-\frac{\binom{x}k }{2\binom{x}t \binom{n}{k-t}}. \nonumber
\end{align}
This yields
\begin{align}
	\mathbb{P}(A_m) & \le  \mathbb{P}(A_1) \prod_{1< j \le \lceil m/2 \rceil} \mathbb{P}(A_j|A_{j-1})
	<  \left( 1 - \frac{ \binom{x}k }{2\binom{x}t \binom{n}{k-t}} \right)^{ m/2 }  
	\nonumber
\\
	& \le  \exp \left( - \frac{ m \binom{x}k }{4\binom{x}t \binom{n}{k-t}} \right)  
	 \le \exp \left( - \frac{\lambda x^k}{8 n^{k-t} (k)_t \binom{k-1}{t} \binom{k}{t}}\right)
 \nonumber \\
& \overset{\eqref{eqn:nR(B_l,K_t)}}{\le} \exp(- x \log (\lambda x)) = (\lambda x)^{-x}. \label{eqn:PAm}
\end{align}
On the other hand, by our assumptions on $k,t, \lambda$ and $x$, we have 
\begin{align}
	\frac{en}x & \overset{\eqref{eqn:nR(B_l,K_t)}}{\le} 2e \left( 8(k)_t\binom{k-1}t \binom{k}t  \log ( \lambda x ) \right)^{-1/(k-t)} ( \lambda x^{t-1})^{1/(k-t)} \nonumber \\
	& \le ( \lambda x^{t-1})^{1/(k-t)}
	\le \lambda x. \nonumber
 \end{align}
Since $\binom{n}{x} \le (en/x)^x \le (\lambda x)^x$, \eqref{eqn:PAm} implies that $\mathbb{P}(A_m) < \binom{n}{x}^{-1}$.
Thus the probability that $X$ is an independent set in $H$ is  less than $\binom{n}{x}^{-1}$.
Since $X$ is an $x$-subset of $[n]$ chosen arbitrarily, the union bound implies that with positive probability no $x$-subset of $[n]$ is an independent set.
Therefore, there exists an $H$ with $\alpha(H) < x$.
\end{proof}

\begin{proof}[Proof of Theorem~\ref{thm:R(B_l,K_t)}]
Note that $\log(\lambda t) \ge \log 3 \ge e/48$.
By Proposition~\ref{prp:R(B_l,K_t)below} (taking $k=3$, $t=2$ and $x = t$), there exists a $2$--$(n ,3,\lambda)$ design with independence number at most $t$ and $n  = \left\lceil \lambda t^2/(96 \log (\lambda t) ) \right\rceil$. 
Since a $2$--$(n ,3,\lambda)$ design does not contain a~$B_{\lambda}$, $R(B_{\lambda}, K^3_{t}) > \lambda t^2/(96 \log (\lambda t) )$.
 
To prove the upper bound, let $n = \lceil c_2 \lambda t^2 / \log(\lambda t)\rceil$ and $\tau = (\lambda n)^{1/2}$, where $c_2$ is a large constant independent of $n, \lambda, t$ to be chosen later.
Let $H$ be a 3-graph of order $n$ with $\Delta_2(H) \le \lambda$, so $H$ does not contain a $B_{\lambda}$.
We are going to show that the independence number of $H$ satisfies $\alpha(H) \ge t$.
Note that $\Delta_1(H) \le \lambda n = \tau^2$.
Recall that $(\lambda t)^{2\eps } (\log \lambda t)^{1- \eps} \le c t^2$.
Hence, the number of 2-cycles, that is the number of $B_1$ in $H$, is at most $\binom{\lambda}2 \binom{n}2 \le n \tau^{3- \eps}$.
Furthermore, $\tau \gg 3$ since $c_2$ is large.
Then, by a theorem of Duke, Lefmann and R\"{o}dl~\cite[Theorem~3]{MR1370956} (taking $k=3$, $t = \tau$ and $\gamma = \eps$), there exists a constant $c'' = c''(3,\eps)>0$ such that 
\begin{align*}
	\alpha ( H ) 
& \ge c'' \frac{n}{\tau} \sqrt{\log \tau} 
= c'' \sqrt{n \log (\lambda n) / 2\lambda}\\
& \ge c'' \sqrt{c_2 t^2 \left(  1 - \frac{\log \log \lambda t }{2\log \lambda t } \right)} 
\ge c'' t \sqrt{c_2/2} \ge t.
\end{align*}
where the last inequality holds provided $c_2 \ge 2 (c''(3, \eps))^{-2}$.
Therefore, the complement of $H$ contains a $K^3_t$.
\end{proof}

\section{Proof of the absorption lemma} \label{sec:absorptionlemma}
Here we prove the absorption lemma, Lemma~\ref{lma:absorptionlemma}, of which the proof is based on the method of H\'{a}n, Person and Schacht~\cite{MR2496914}.

\begin{proof}[Proof of Lemma~\ref{lma:absorptionlemma}]
Let $H$ be a hypergraph of order $n \ge n_0$ such that $H$ is $(F,i,\eta)$-closed.
Throughout the proof we may assume that $n_0$ is chosen to be sufficiently large.
Set $m_1 = it-1$ and $m= (t-1) (m_1 +1)$.
Furthermore, call a $m$-set $A \in \binom{V}m$ an \emph{absorbing $m$-set for a $t$-set $T \subseteq V(H)$} if $A \cap T= \emptyset$ and both $H[A]$ and $H[A \cup T]$ contain $F$-factors.
Denote by $\mathcal{L}(T)$ the set of all absorbing $m$-sets for $T$.
Next, we show that for every $t$-set $T$, there are many absorbing $m$-sets for~$T$.

\begin{clm} \label{clm:numberofabsorbingm-set}
For every $t$-set $T \in \binom{V}t$, $|\mathcal{L}(T)| \ge (\eta/2)^{t}\binom{n}{m}$.
\end{clm}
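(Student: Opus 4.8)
The plan is to build an absorbing $m$-set for a fixed $t$-set $T = \{v_1, \dots, v_t\}$ by stitching together $t-1$ blocks, each of size $m_1 + 1 = it$, where the blocks are chosen to witness the $(F,i,\eta)$-closeness of consecutive vertices of $T$. Concretely, I would first relabel so that $T = \{v_1,\dots,v_t\}$ and consider the pairs $(v_1,v_2), (v_2,v_3), \dots, (v_{t-1},v_t)$; actually it is cleaner to use an auxiliary set of $t-1$ fresh "link" vertices $w_1,\dots,w_{t-1}$ chosen greedily from $V \setminus T$, and for each $j \in [t-1]$ pick an $(m_1)$-set $S_j$ disjoint from everything chosen so far such that both $H[S_j \cup v_j]$ and $H[S_j \cup w_j]$ contain $F$-factors — this is possible because $v_j$ is $(F,i,\eta)$-close to $w_j$ (or, more carefully, we first fix the $w_j$ and use that each pair is close). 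The point of the construction is the standard "absorbing gadget" trick: the set $A = \bigcup_{j=1}^{t-1}(S_j \cup w_j)$ together with $T$ can be retiled — $H[A]$ has an $F$-factor using the copies on $S_j \cup w_j$, while $H[A \cup T]$ has an $F$-factor using the copies on $S_j \cup v_j$ for $j \le t-1$ plus one more copy; here one should check the bookkeeping so that $|A| = (t-1)(m_1+1) = m$ and that swapping $T$ in frees exactly the right vertices. I expect the most delicate bit is getting this retiling combinatorics exactly right, i.e. arranging the $w_j$'s and the roles of $v_j$'s so that both $H[A]$ and $H[A\cup T]$ genuinely decompose into disjoint copies of $F$ covering all $m$ (resp. $m+t$) vertices.

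Having fixed the construction, the counting is a routine greedy/multiplicity estimate. I would build $A$ sequentially: having chosen partial data using at most $m + t$ vertices so far, when selecting the $j$-th pair, the number of valid $(m_1)$-sets $S_j$ witnessing closeness of $v_j$ and $w_j$ is at least $\eta n^{m_1}$ by definition of $(F,i,\eta)$-close, and we lose at most $O(n^{m_1 - 1})$ of them for hitting the $O(m+t)$ already-used vertices; for $n$ large this is still at least $\frac{\eta}{2} n^{m_1}$. Likewise there are at least $(1-o(1))n$ choices for each link vertex $w_j$. Multiplying over the $t-1$ blocks gives at least roughly $(\eta/2)^{t-1} n^{(t-1)(m_1+1)} / (\text{constant}) = (\eta/2)^{t-1} n^m/C$ ordered constructions; dividing by the at most $m!$ orderings that produce the same set $A$, and comparing with $\binom{n}{m} \le n^m/m!$, yields $|\mathcal{L}(T)| \ge (\eta/2)^t \binom{n}{m}$ for $n \ge n_0$, with room to spare (the extra factor of $\eta/2$ absorbing all the constants). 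The only care needed is to confirm that distinct choices of the tuple $(w_1, S_1, \dots, w_{t-1}, S_{t-1})$ that give the same underlying $m$-set $A$ number at most $m!$ — or, even more crudely, simply bound the count of distinct sets from below directly.

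The main obstacle I anticipate is purely the retiling / gadget design, not the probabilistic counting: one must exhibit a single $m$-set $A$ that simultaneously tiles on its own and tiles after absorbing an arbitrary $t$-set $T$, and for general $F$ (not just $F = K_t^k$) this requires using the closeness definition in exactly the chain-like way described — each witness set $S_j \cup v_j$ and $S_j \cup w_j$ supports an $F$-\emph{factor}, and these factors must be glued so that the $w_j$'s play the role of the $v_j$'s. After the claim is established, the rest of the absorption lemma follows the standard route (choose $\mathcal{L}(T)$'s by an averaging/union-bound argument, pick a random family $\mathcal{A}$ of disjoint $m$-sets of size about $(\eta/2)^t n/(4\binom{\ }{\ })$ meeting every $\mathcal{L}(T)$ in many sets, then greedily absorb any small $W$ with $|W| \in t\mathbb{Z}$ by partitioning $W$ into $t$-sets and using a private absorbing gadget for each), so no further surprises are expected there.
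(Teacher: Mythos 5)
Your counting is fine (and in fact gives the slightly better constant $(\eta/2)^{t-1}$), but there is a genuine gap in the gadget construction: with the link vertices $w_1,\dots,w_{t-1}$ chosen ``greedily'' with no further constraint, the tiling of $H[A\cup T]$ does not close up. Using the $F$-factors on $S_j\cup v_j$ for $j\le t-1$ covers all of $S_1\cup\dots\cup S_{t-1}$ together with $v_1,\dots,v_{t-1}$; the leftover is precisely $\{w_1,\dots,w_{t-1},v_t\}$, and your ``plus one more copy'' requires this $t$-set to span a copy of $F$ in $H$. Nothing in the construction guarantees that. This is not a bookkeeping footnote — without it the $m$-set $A$ is simply not absorbing.

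The paper's proof closes exactly this hole, and the fix changes where the first factor of $\eta/2$ comes from. One does \emph{not} pick $t-1$ free link vertices: instead, one first uses the closeness of $v_1$ to any $u\notin T$ (hence, by averaging, the existence of many copies of $F$ through $v_1$) to pick a copy of $F$ with vertex set $\{v_1,u_2,\dots,u_t\}$ that avoids $v_2,\dots,v_t$; this costs the first factor $\eta n^{t-1}/2$. The vertices $u_2,\dots,u_t$ then play the role of your $w_j$'s, and for each $j=2,\dots,t$ one uses $(F,i,\eta)$-closeness of the pair $(u_j,v_j)$ to pick an $m_1$-set $S_j$ with $F$-factors on both $S_j\cup u_j$ and $S_j\cup v_j$. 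Now $A=\{u_2,\dots,u_t\}\cup S_2\cup\dots\cup S_t$ has the right size $m=(t-1)(m_1+1)$, $H[A]$ tiles via the factors on $S_j\cup u_j$, and $H[A\cup T]$ tiles via the factors on $S_j\cup v_j$ together with the pre-chosen copy of $F$ on $\{v_1,u_2,\dots,u_t\}$. So the missing idea is that the $t-1$ auxiliary vertices must themselves be chosen to form a copy of $F$ with one designated vertex of $T$; once you add that, your greedy counting goes through verbatim and even gives a margin of a factor $\eta/2$.
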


\begin{proof}
Let $T= \{v_1, \dots, v_t\}$ be fixed.
Since $v_1$ and $u$ are $(F,i,\eta)$-connected for $u \notin T$, there are at least $\eta n^{m_1}$ $m_1$-sets $S$ such that $H[S \cup v_1]$ contains an $F$-factor.
Hence, by an averaging argument there are at least $\eta n^{t-1}$ copies of $F$ containing~$v_1$.
Since $n_0$ was chosen large enough, there are at most $(t-1)n^{t-2} \le \eta n^{t-1}/2$ copies of $F$ containing $v_1$ and $v_j$ for some $2 \le j \le t$.
Thus, there are at least $\eta n^{t-1}/2$ copies of $F$ containing $v_1$ but none of $v_2, \dots, v_{t}$.
We fix one such copy of $F$ with $V(F) = \{ v_1, u_2, \dots, u_t\}$.
Set $S_1 = \{ u_2, \dots, u_t\}$ and $W_0 = T$.

For each $2\le j \le t$, we are going to find a $(u_j, v_j)$-connector $S_j$ of length $i$ (with respect to $F$) such that $T, S_1, S_2, \dots, S_t$ are pairwise disjoint. 
Suppose that we have already found $S_1, \dots, S_{j-1}$ for some $2 \le j \le t$. 
We construct $S_j$ as follows.
Let $W = T \cup S_1 \cup \dots \cup S_{j-1}$.
Since $v_j$ and $u_j$ are $(F,i,\eta)$-close to each other, the number of $(u_j, v_j)$-connectors $S_j$ of length~$i$ is at least $\eta n^{m_1}$.
Note that at most $|W| n^{m_1-2} \le (2t + (t-2) m_1)n^{m_1-1} \le \eta n^{m_1}/2$ of them contains a vertex in $W$.
Hence there are at least $\eta n^{m_1}/2$ choices for $S_j$.
Therefore, we have constructed $S_1, \dots, S_t$. 
Note that $A = \bigcup_{1 \le j \le t} S_j$ is an absorbing set for $T$.
Recall that there are at least $\eta n^{t-1}/2$ choices for $S_1$ and at least $\eta n^{m_1}/2$ choices for $S_j$ for each $2 \le j \le t$.
In total, we obtain $(\eta/2)^{t} n^{m}$ absorbing $m$-sets for $T$ with multiplicity at most $m!$, so the claim holds.
\end{proof}

Now, choose a family $\mathcal{F}$ of $m$-sets by selecting each of the $\binom{n}{m}$ possible $m$-sets independently with probability $p = (\eta/2)^{t} n / (8 m^2 \binom{n}{m})$.
Then, by Chernoff's bound (see e.g.~\cite{MR1885388}) with probability $1-o(1)$ as $n \rightarrow \infty$, the family $\mathcal{F}$ satisfies the following properties:
\begin{align}
|\mathcal{F}| \le (\eta/2)^{t} n / (4 m^2 ) \qquad \text{and} \qquad
|\mathcal{L}(T) \cap \mathcal{F}| & \ge (\eta/2)^{2t} n / ( 16  m^2) \label{eqn:|F|}
\end{align} 
for all $t$-sets $T$.
Furthermore, we can bound the expected number of pairs of $m$-sets that are intersecting from above by
\begin{align}
	\binom{n}{m} \times m \times \binom{n}{m-1} \times p^2 
= \left(\frac{\eta}{2}\right)^{2t} \frac{n^2}{ 64 (n-m+1) m^2}
\le \left(\frac{\eta}{2}\right)^{2t} \frac{n}{ 64 m^2}.
\nonumber
\end{align}
Thus, using Markov's inequality, we derive that with probability at least~$1/2$
\begin{align}
	\textrm{$\mathcal{F}$ contains at most $\left(\frac{\eta}{2}\right)^{2t} \frac{n}{ 32 m^2}$ intersecting pairs of $m$-sets.} \label{eqn:F}
\end{align}
Hence, with positive probability the family $\mathcal{F}$ has all properties stated in \eqref{eqn:|F|} and \eqref{eqn:F}.
For each intersecting pair in $\mathcal{F}$, we delete one of the $m$-sets.
Further remove any $m$-set in $\mathcal{F}$ that is not an absorbing $m$-set for $T$ for all $t$-sets $T \subseteq V$.
Call the resulting family $\mathcal{F}'$ and set $U = V(\mathcal{F}')$.
Clearly, $|U| = m|\mathcal{F}'| \le m |\mathcal{F}| \le (\eta/2)^{t} n / (4m)$ by~\eqref{eqn:|F|}.
Note that $\mathcal{F}'$ consists of pairwise disjoint $m$-sets.
Since every $m$-set in $\mathcal{F}'$ is an absorbing $m$-set for some $t$-set $T$, $H[U]$ has an $F$-factor and so $|U| \in t \mathbb{Z}$.
For all $t$-sets $T$, by~\eqref{eqn:|F|} we have
\begin{align}
|\mathcal{L}(T) \cap \mathcal{F}'| & \ge (\eta/2)^{2t} n/ ( 16 m^2 ) - (\eta/2)^{2t} n/ (32 m^2) = (\eta/2)^{2t} n / (32 m^2).
\end{align}
For a set $W \subseteq V \setminus U$ of size $|W| \le (\eta/2)^{2t}t n / (32 m^2)$ and $ |W| \in t \mathbb{Z}$, $W$ can be partitioned in to at most $(\eta/2)^{2t} n / (32 m^2) $ $t$-sets.
Each $t$-set can be successively absorbed using a different absorbing $m$-set, so $H[U \cup W]$ contains an $F$-factor. 
\end{proof}

\section{$K_k^k(m)$-factors} \label{sec:K(m)factors}

Our aim of this section is to prove Theorem~\ref{thm:k-partite}, which determines the asymptotic values of $t^3_1(n,K_3^3(m))$ and $t^4_1(n,K_4^4(m))$.
The theorem is trivially  true for $m=1$, so we may assume that $m \ge 2$ for the remainder of this section.

Let $H$ be a $k$-graph. 
Given distinct $x,y \in V(H)$ and a constant $\alpha>0$, a $(k-1)$-set $S \in N(x) \cap N(y)$ is said to be \emph{$\alpha$-good for $(x,y)$} if $\deg (S) \ge \alpha n$.
Otherwise, $S$ is \emph{$\alpha$-bad} for~$(x,y)$.
A pair of vertices $(x,y)$ is \emph{$\alpha$-good} if the number of $\alpha$-good sets for $(x,y)$ is at least $\alpha \binom{n}{k-1}$.
If $(x,y)$ is not $\alpha$-good, then it is \emph{$\alpha$-bad}.
We are going to show that if $(x,y)$ is $\alpha$-good, then $x$ and $y$ are $(K_k^k(m),1, \eta)$-close to each other, Lemma~\ref{lma:alpha-good}.
First, we need the following simple facts.

The \emph{Tur\'an number} of a $k$-graph $F$, $\ex(n,F)$, is the maximum number of edges in an $F$-free $k$-graph of order $n$.
For $k,m \ge 2$, Erd\H{o}s~\cite{MR0183654} showed that $\ex(n,K^k_k(m)) < n^{k-m^{1-k}}$ for large $n$.
Furthermore, if $H$ is a $k$-graph of order $n$ with $e(H) > \ex(n,K^k_k(m)) + \beta n^{k}$ for $\beta >0$ and large $n$, then we have the `supersaturation' phenomenon discovered by Erd{\H{o}}s and Simonovits~\cite{MR726456}.

\begin{prp}[Supersaturation] \label{prp:erdos}
For integers $k,m \ge 2$ and constant $\beta>0$, there exist a constant $c = c(k,m,\beta) >0$ and an integer $n_0 =n_0(k,m,\beta) >0$ such that for every $k$-graph $H$ on $n \ge n_0$ vertices with at least $\beta n^k$ edges, there are at least $cn^{km}$ copies of $K^k_k(m)$ in $H$.
\end{prp}

\begin{lma} \label{lma:alpha-good}
Let $k, m \ge 2$ be integers and $\alpha>0$.
There exist a constant $\eta_0 = \eta_0(k,m,\alpha)>0$ and an integer $n_0= n_0(k,m,\alpha)$ such that for every $k$-graph $H$ of order $n \ge n_0$, if $(x,y)$ is $\alpha$-good in $H$ for $x,y \in V(H)$ then $x$ and $y$ are
$(K^k_k(m),1,\eta)$-close to each other for all $0 < \eta \le \eta_0$.
\end{lma}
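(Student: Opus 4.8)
The goal is to produce $\eta n^{km-1}$ sets $S \in \binom{V(H)}{km-1}$ with $S \cap \{x,y\} = \emptyset$ such that both $H[S \cup x]$ and $H[S \cup y]$ contain $K_k^k(m)$-factors. Since $|K_k^k(m)| = km$, a $K_k^k(m)$-factor on $km$ vertices is just a single copy of $K_k^k(m)$. So concretely I need $\eta n^{km-1}$ choices of an $(km-1)$-set $S$, avoiding $x$ and $y$, such that $S \cup x$ spans a $K_k^k(m)$ and $S \cup y$ spans a $K_k^k(m)$. The natural way to get both simultaneously is to build \emph{one} copy of $K_k^k(m)$ on a vertex set of the form $Z \cup \{x,y\}$ where $x$ and $y$ lie in the \emph{same} part of the $k$-partition; then deleting $y$ gives a $K_k^k(m-1,m,\dots,m)$-completion-ready structure — but that is the wrong order. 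Instead: find copies of $K_k^k(m)$ on a vertex set $R \cup \{x\}$ where $|R| = km-1$, such that $R \cup \{y\}$ \emph{also} spans a copy (with $y$ taking $x$'s role in the same part). Equivalently, I want to find a copy of $K_k^k(m)$ on some $km$-set $D$ with $x \notin D$, $y \notin D$, a distinguished vertex $w \in D$ lying in part $i$, such that replacing $w$ by $x$ still gives a $K_k^k(m)$ and replacing $w$ by $y$ still gives a $K_k^k(m)$; then $S = (D \setminus w) \cup \{w\}$... this is getting circular, so let me restructure around the $\alpha$-good hypothesis.

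Here is the cleaner approach. Recall $(x,y)$ being $\alpha$-good means there are at least $\alpha\binom{n}{k-1}$ sets $S_0 \in \binom{V}{k-1}$ with $\deg(S_0) \ge \alpha n$ and $x, y \in N(S_0)$. First I would restrict to $\alpha$-good $(k-1)$-sets $S_0$ avoiding $\{x,y\}$ — at most $O(n^{k-2})$ are lost, so still $\gtrsim \alpha \binom{n}{k-1}/2$ remain. Fix one such $S_0$ and let $W = N(S_0)$, so $|W| \ge \alpha n$ and $x, y \in W$. Now consider the link of $S_0$: I will build a copy of $K_k^k(m)$ on $km$ vertices, of which $k-1$ of the "rows" are $m$ disjoint copies of $S_0$'s role... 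Actually the simplest: since $|W| \ge \alpha n$, inside $W$ I want many copies of $K_k^k(m-1)$ on a $k(m-1)$-vertex set $Y \subseteq W \setminus \{x,y\}$ such that $Y$ together with \emph{any} choice of one more vertex per part from $W$ extends to $K_k^k(m)$. Cleaner still: take the auxiliary $k$-graph $H'$ on vertex set $W$ whose edges are the $k$-sets $e \in \binom{W}{k}$ that form a copy of $K_k^k$ completed through... no — I should use that every $k$-set $S_0 \cup \{w\}$ with $w \in W$ is an edge, but I need all edges of the copy, not just those through $S_0$.

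Let me commit to the following. Look at $G = H[W]$ restricted appropriately; we are not told $H[W]$ is dense, so instead use: for each $\alpha$-good $S_0$, pick an arbitrary partition-compatible structure. The honest route, and the one I expect the authors take, is: an $\alpha$-good pair yields, via an averaging/counting argument, $\Omega(n^{k-1})$ edges through a common dense link, and then one applies Corollary~\ref{cor:erdos} (the supersaturated Erdős box theorem) \emph{inside the link structure} to get $\Omega(n^{k(m-1)})$ copies of $K_k^k(m-1)$ that can be completed using $S_0$-type extensions to $K_k^k(m)$, simultaneously absorbing $x$ into one part and $y$ into the corresponding part of a second copy. So the key steps, in order: (1) reduce to $\alpha$-good $S_0$ disjoint from $\{x,y\}$; (2) for each such $S_0$, let $W = N(S_0)$ with $|W| \ge \alpha n$ and note $\{S_0 \cup w : w \in W\}$ are all edges, giving a complete "star" of $K_k^k$'s; (3) build a copy of $K_k^k(m)$ with one part equal to an $m$-subset of $W$ containing $x$, and another copy with the analogous part containing $y$, where the remaining $km - m$ or $km-1$ vertices form the shared set $S$ — using Corollary~\ref{cor:erdos} to guarantee $\Omega(n^{\bullet})$ choices of the "non-$x$, non-$y$" part of the $K_k^k(m-1)$ sub-box and the freedom in picking the other $W$-vertices to reach the needed count $\eta n^{km-1}$; (4) bound the overcounting (each $S$ arises at most $(km-1)!$ times) and collect the constant $\eta = \eta(\alpha,k,m)$.

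\textbf{The main obstacle} I anticipate is Step (3): arranging that a \emph{single} $(km-1)$-set $S$ simultaneously completes to $K_k^k(m)$ with $x$ and with $y$, rather than just finding two separate copies. This forces $x$ and $y$ to play interchangeable roles, i.e. to lie in "the same part" of a common $K_k^k(m)$-minus-a-vertex skeleton; the $\alpha$-good hypothesis (a common neighbourhood $W = N(S_0)$ containing both $x$ and $y$) is exactly what makes this possible, since any vertex of $W$ — in particular $x$ or $y$ — completes the star $S_0 \cup \{\cdot\}$, but one must be careful that the \emph{other} $k-1$ rows of the $K_k^k(m)$ (the ones not touching $S_0$'s slot) are genuine edges of $H$ and not just assumed; handling that is where Corollary~\ref{cor:erdos} applied to the appropriate dense sub-hypergraph does the real work, and keeping the part sizes and disjointness bookkeeping consistent (so that $|S| = km-1$ exactly) is the fiddly part. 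Everything else — the degree-counting reductions and the Chernoff/Markov-free overcounting estimate — is routine.
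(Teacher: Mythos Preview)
Your plan has a real gap in Step~(3), and it is the key idea you are missing rather than a bookkeeping detail. Fixing one $\alpha$-good $(k-1)$-set $S_0$ and letting $W=N(S_0)$ gives you only the star structure $\{S_0\cup w:w\in W\}$, i.e.\ a copy of $K_k^k(1,\dots,1,|W|)$. To upgrade this to a $K_k^k(m)$ you would need $m$ vertices in each of the first $k-1$ parts, and for every transversal $(v_1,\dots,v_{k-1},w)$ to be an edge --- but nothing in your hypothesis tells you that $\{v_1,\dots,v_{k-1},w\}$ is an edge when $\{v_1,\dots,v_{k-1}\}\ne S_0$. You acknowledge this worry (``the other $k-1$ rows \dots\ are genuine edges of $H$ and not just assumed'') but your remedy, ``Corollary~\ref{cor:erdos} applied to the appropriate dense sub-hypergraph'', never names which hypergraph that is, and indeed no dense $k$-graph has been produced.

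The paper's proof resolves this by changing the role of the $\alpha$-good $(k-1)$-sets: they are not building blocks for the final $K_k^k(m)$, they are the \emph{edges of a dense $(k-1)$-graph} to which Corollary~\ref{cor:erdos} is applied. Concretely: since $(x,y)$ is $\alpha$-good, $|N(x)\cap N(y)|\ge\alpha\binom{n}{k-1}$, and by averaging there are $\ge\alpha n/2$ vertices $z$ with $|N(x)\cap N(y)\cap N(z)|\ge\alpha\binom{n}{k-1}$. For each such $z$, the set $N(x)\cap N(y)\cap N(z)$ is a dense $(k-1)$-graph, so Corollary~\ref{cor:erdos} yields $\Omega(n^{(k-1)m})$ copies of $K_{k-1}^{k-1}(m)$ inside it. Any such copy, with parts $V_1,\dots,V_{k-1}$, automatically gives a $K_k^k(m,\dots,m,3)$ with last part $\{x,y,z\}$, because every $(k-1)$-transversal lies in $N(x)\cap N(y)\cap N(z)$. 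Finally, a Jensen/convexity argument converts many choices of a single extra vertex $z$ into many choices of an $(m-1)$-set $\{z_1,\dots,z_{m-1}\}$, yielding $\Omega(n^{km-1})$ copies of $K_k^k(m,\dots,m,m+1)$ with $x,y$ in the large part; deleting $y$ (resp.\ $x$) from that part gives the required $K_k^k(m)$ containing $x$ (resp.\ $y$). This switch --- from ``$S_0$ is a piece of the target'' to ``$S_0$ is an edge of the link $(k-1)$-graph'' --- is exactly the idea your outline lacks.
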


\begin{proof}
Let $H$ be a $k$-graph of order $n$ sufficiently large.
Suppose that $(x,y)$ is $\alpha$-good for $x,y \in V(H)$.
In order to show that $x$ and $y$ are $(K_k^k(m),1, \eta)$-close to each other, it is sufficient to show that there exist at least $\eta n^{k m - 1}$ copies of $K^k_k(m, \dots, m, m+1)$ containing $x$ and $y$ in the partition class of size $m+1$.

Let $\mathcal{S}$ be the set of $\alpha$-good $(k-1)$-sets for $(x,y)$.
Clearly, $|\mathcal{S}| \ge \alpha \binom{n}{k-1}$ and each $S \in \mathcal{S}$ satisfies $\deg(S) \ge \alpha n$ and $x,y \in N(S)$.
Hence there exist at least $\alpha^2 n/2 $ vertices $z \in V(H) \setminus \{x,y\}$ so that $z \in N(S)$ for at least $\alpha^2 \binom{n}{k-1}/2$ sets $S \in \mathcal{S}$.
Otherwise, we have 
\begin{align*}
	\alpha \binom{n}{k-1} (\alpha n -2)  
	& \le \sum_{S \in \mathcal{S}} ( \deg(S) -2 )
	= \sum_{v \in V(H) \setminus \{x,y\}} | N(v) \cap \mathcal{S}| \\
	& < \frac{\alpha^2 n}2 \binom{n}{k-1} + \left( n-2 - \frac{\alpha^2 n}2 \right) \frac{\alpha^2}2 \binom{n}{k-1} \\
	& = \alpha^2 \binom{n}{k-1} \left( n - 1 - \frac{\alpha^2 n}4 \right),
\end{align*}
a contradiction as $n$ is large.
Let $Z$ be the set of vertices $z \in V(H) \setminus \{x,y\}$ such that $|N(x) \cap N(y) \cap N(z)| \ge \alpha^2 \binom{n}{k-1}/2$.
Thus, $|Z| \ge \alpha^2 n/2 $.
For $z \in Z$, consider the $(k-1)$-graph $H_z$ on vertex set $V(H) \setminus \{x,y,z\}$ with edge set $E(H_z) = N(x) \cap N(y) \cap N(z)$.
Note that $|E(H_z)| \ge \alpha^2 \binom{n}{k-1}/2$, so there are at least $c n^{(k-1)m}$ copies of $K^{k-1}_{k-1}(m)$ in $H_z$ by Proposition~\ref{prp:erdos}, where $c$ is a constant depending only on $\alpha, k$ and~$m$.
Moreover, each copy of $K^{k-1}_{k-1}(m)$ in $H_z$ corresponds to a $K^{k}_k(m, \dots, m , 3)$ in $H$ each of which the partition class of size~3 is precisely $\{x,y,z\}$. 

For each $(k-1)m$-set $T \subseteq V(H)$, denote by $\deg'(T)$ the number of vertices $z \in V \setminus \{x,y\}$ such that $T \cup \{x,y,z\}$ forms a $K^{k}_k(m, \dots, m , 3)$ in~$H$ of which the partition class of size~3 is precisely $\{x,y,z\}$. 
We get
\begin{align}
\sum_{ T \in \binom{V}{(k-1)m}}  \deg'(T) \ge c n^{(k-1)m} |Z| \ge c \alpha^2 n^{(k-1)m+1}/2.\nonumber
\end{align}
Therefore, the number of copies of $K^{k}_k(m, \dots, m , m+1)$, of which the partition class of size $m+1$ contains both $x$ and $y$, is equal to 
\begin{align}
	\sum_{ T \in \binom{V}{(k-1)m}} \binom{\deg'(T)}{m-1} & \ge  \binom{n}{(k-1)m} \binom{\sum \deg'(T)/\binom{n}{(k-1)m} }{m-1} \nonumber \\
	& \ge\binom{n}{(k-1)m} \binom{c \alpha^2 n^{(k-1)m+1} /(2\binom{n}{(k-1)m}) }{m-1} \nonumber \\
& \ge \eta_0 n^{km-1}, \nonumber
\end{align}
for some constant $\eta_0 = \eta_0(k,m,\alpha)$, where the first inequality is due to Jensen.
Therefore, $x$ and $y$ are $(K_k^k(m),1, \eta_0)$-close to each other as required.
\end{proof}

Now, suppose there exists a vertex $x_0 \in V(H)$ such that $(x,y)$ is $\alpha$-bad if and only if $x_0 \in \{x,y\}$.
The following proposition shows that if $\delta_1(H) \ge (1/2 +\gamma)\binom{n}{k-1}$, then every vertex is contained in many $K^{k}_k(m)$.
Let $T$ be a copy of $K_k^k(m)$ containing~$x_0$.
Thereby, every pair $(x,y)$ of vertices in $H_1 = H \setminus T$ is $\alpha/2$-good and so $H_1$ is $(K_k^k(m),1,\eta)$-closed by Lemma~\ref{lma:alpha-good}.

\begin{prp} \label{prp:K^k(l)}
Let $k, m \ge 2$ be integers and $\gamma>0$.
There exist a constant $c = c(k,m,\gamma )>0$ and an integer $n_0= n_0(k,m,\gamma)$ such that for every $k$-graph $H$ of order $n$ with $ \delta_1 (H) \ge (1/2 +\gamma)\binom{n}{k-1}$ and for every vertex $x\in V$, there exist at least $c n^{km-1}$ copies of $K^{k}_k(m)$ containing~$x$.
\end{prp}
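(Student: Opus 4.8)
The plan is to mimic the argument of Lemma~\ref{lma:alpha-good}, replacing the use of $\alpha$-goodness of a pair by the minimum degree assumption applied to a single vertex, and then iterating the "one layer at a time" construction. First I would fix a vertex $v \in V$. Since $\delta_1(H) \ge (1/2+\gamma)\binom{n}{k-1}$, we have $\deg(v) \ge (1/2+\gamma)\binom{n}{k-1}$, so the link $(k-1)$-graph $N(v)$ on the vertex set $V \setminus \{v\}$ has at least $(1/2+\gamma)\binom{n}{k-1} \ge \tfrac12\gamma n^{k-1}$ edges (for $n$ large). Thus $N(v)$ contains at least $c' n^{(k-1)m}$ copies of $K^{k-1}_{k-1}(m)$ by Corollary~\ref{cor:erdos}, applied with $\beta$ a suitable multiple of $\gamma$.

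Next I would build the last part of the $K^k_k(m)$ greedily, much as in Lemma~\ref{lma:alpha-good}. For an $(k-1)m$-set $T$ spanning a copy of $K^{k-1}_{k-1}(m)$ in $N(v)$, let $\deg''(T)$ count the vertices $z \in V \setminus \{v\}$ with $z \notin T$ such that $T \cup \{v, z\}$ spans a $K^k_k(m,\dots,m,2)$ with the part of size $2$ equal to $\{v,z\}$; equivalently, $z$ together with $v$ lies in $N(S)$ for every $S \in \binom{T}{k-1}$ that contains exactly one vertex from each of the $m$-parts of $T$. A counting argument using $\delta_{k-1}$ — or more simply the fact that for each such transversal $S$ of $T$, the common neighbourhood pattern is dense — gives a lower bound on $\sum_T \deg''(T)$ of order $n^{(k-1)m+1}$: indeed, one may instead double count pairs $(z, T)$ by first choosing $z$ in the link of $v$ appropriately. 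Then, exactly as in Lemma~\ref{lma:alpha-good}, the number of copies of $K^k_k(m)$ containing $v$ (with $v$ in one of the $m$-parts) is $\sum_T \binom{\deg''(T)}{m-1}$, and by Jensen's inequality applied to the convex function $\binom{\cdot}{m-1}$ this is at least $\binom{n}{(k-1)m}\binom{\sum_T \deg''(T)/\binom{n}{(k-1)m}}{m-1} \ge c\, n^{km-1}$ for a suitable $c = c(\gamma,k,m) > 0$ and $n$ sufficiently large.

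The main obstacle I anticipate is the bookkeeping needed to get the lower bound on $\sum_T \deg''(T)$, because extending a $K^{k-1}_{k-1}(m)$ living inside $N(v)$ to a full copy of $K^k_k(m)$ through $v$ requires that \emph{every} transversal $(k-1)$-set of $T$ (not just one) have $z$ as a common neighbour, so a single application of $\delta_{k-1}$ is not literally enough. The clean way around this is to reverse the order of construction: first pick a $z \in N(v)$... more precisely, one builds the configuration by choosing, for the fixed vertex $v$, first a large "robust" substructure in its link using Corollary~\ref{cor:erdos}, and then — using the minimum degree to control codegrees — showing that a positive proportion of $(k-1)m$-sets $T$ inside a copy of $K^{k-1}_{k-1}(m+1)$ in $N(v)$ can be completed. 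In fact, since we only need $v$ in one part, it suffices to find $c n^{km-1}$ copies of $K^k_k(m,\dots,m,m+1)$ with $v$ in the part of size $m+1$: take a copy of $K^{k-1}_{k-1}(m)$ in $N(v)$ on a vertex set $T$, and for $v$ to form a $K^k_k$ with $T$ together with $m$ further vertices $z_1,\dots,z_m$ of $N(v)$, one needs the $z_j$ and the parts of $T$ to be mutually "linked", which again reduces to a dense-counting/Jensen argument identical in spirit to Lemma~\ref{lma:alpha-good}. I expect no new idea is required beyond adapting that lemma, with $\delta_1(H) \ge (1/2+\gamma)\binom{n}{k-1}$ playing the role that $\alpha$-goodness of $(x,y)$ played there.
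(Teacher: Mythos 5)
Your first two sentences are fine, but the rest of the second paragraph and most of the third miss the clean way of making $\sum_T \deg''(T)$ large, and introduce incorrect ingredients along the way. First, the hypothesis of Proposition~\ref{prp:K^k(l)} is a bound on $\delta_1(H)$, not on $\delta_{k-1}(H)$, so your proposed ``counting argument using $\delta_{k-1}$'' has nothing to rest on. Second, your idea of starting from a $K^{k-1}_{k-1}(m)$ (or $K^{k-1}_{k-1}(m+1)$) in $N(v)$ and then trying to ``complete'' it runs directly into the obstacle you yourself identify: you would need a single $z$ to lie in $N(S)$ for \emph{every} transversal $S$ of $T$, and neither $\delta_1$ nor Corollary~\ref{cor:erdos} controls that directly. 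Third, your final reduction to counting copies of $K^k_k(m,\dots,m,m+1)$ with $v$ in the $(m+1)$-part is misremembered from Lemma~\ref{lma:alpha-good}: that extra vertex was needed there because two vertices $x,y$ were fixed; here only $v$ is fixed, so the right object to count is $K^k_k(m)$ itself (with $km$ vertices, $km-1$ of them free), and passing through a $(km+1)$-vertex structure would create an uncontrolled overcount.

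The correct resolution is exactly ``pick $z$ first,'' and it is in fact simpler than Lemma~\ref{lma:alpha-good} because no averaging over $z$ is required. For \emph{every} vertex $z \neq v$, inclusion--exclusion and the hypothesis $\delta_1(H) \ge (1/2+\gamma)\binom{n}{k-1}$ give $|N(v)\cap N(z)| \ge 2\gamma\binom{n}{k-1}$, so the $(k-1)$-graph $N(v)\cap N(z)$ contains at least $c'n^{(k-1)m}$ copies of $K^{k-1}_{k-1}(m)$ by Corollary~\ref{cor:erdos}. Defining $\deg''(T)$, for a $(k-1)m$-set $T$, to be the number of $z$ such that $T \cup \{v,z\}$ spans a $K^k_k(m,\dots,m,2)$ with last part $\{v,z\}$ --- equivalently, $T$ spans a $K^{k-1}_{k-1}(m)$ in $N(v)\cap N(z)$ --- the double count gives $\sum_T \deg''(T) \ge (n-1)c'n^{(k-1)m}$, and Jensen applied to $\sum_T \binom{\deg''(T)}{m-1}$ gives the required $c\,n^{km-1}$ copies of $K^k_k(m)$ containing $v$. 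You should replace the vague second and third paragraphs with this explicit chain.
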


\begin{proof}
Fix $x \in V$.
For every $y \in V \setminus \{x\}$, $|N(x) \cap N(y)| \ge 2 \gamma \binom{n}{k-1}$.
Let $\mathcal{S}$ be the set of $(k-1)$-sets $S \in N(x)$ such that $\deg(S) \ge \gamma n$.
Note that $| \mathcal{S} | \ge \gamma \binom{n}{k-1}$.
Otherwise, we have 
\begin{align*}
	2\gamma (n-1) \binom{n}{k-1} 
	& \le \sum_{ y \in V \setminus \{x\}} |N(x) \cap N(y)| 
	= \sum_{S \in N(x)} (\deg ( S )-1)\\
	& < \gamma \binom{n}{k-1} (n-1) + (1- \gamma) \binom{n}{k-1} \cdot \gamma n\\
	& = (2n -1 - \gamma n )\gamma \binom{n}{k-1},
\end{align*}
a contradiction as $n$ is large.
By an averaging argument, there exists a vertex $y \in V(H) \setminus \{x\}$ with $|N(y) \cap \mathcal{S}| \ge \gamma^2 \binom{n}{k-1}$.
Note that $(x,y)$ is $\gamma^2$-good.
By Lemma~\ref{lma:alpha-good}, $x$ and $y$ are $(K^k_k(m),1,\eta)$-close to each other, where $\eta>0$ is a constant depending only on $k,m,\gamma$.
Hence, there exist at least $\eta n^{km-1}$ $(km-1)$-sets $T$ such that $H[T \cup x]$ is a copy of $K_k^k(m)$.
Therefore, the proposition follows by setting $c = \eta$.
\end{proof}

We will also need the following result from Khan~\cite{2011arXiv1101.5830K,2011arXiv1101.5675K}, which shows the existence of an almost $K_k^k(m)$-factor for $k =3,4$.
The result was not stated explicitly, but it is easily seen from his proofs (for the non extremal cases).

\begin{lma}[Khan~\cite{2011arXiv1101.5830K,2011arXiv1101.5675K}] \label{lma:Khan}
Let $k=3,4$ and $m\ge 2$ be integers.
Let $\gamma$ be a strictly positive constant.
Then there exists an integer $n_0 = n_0(k,m,\gamma)$ such that every $k$-graph $H$ of order~$n \ge n_0$ with 
\begin{align*}
\delta_{1}(H) \ge  
\begin{cases}
\left(\frac59 +\gamma\right) \binom{n}2 & \textrm{if $k=3$} \\
\left( \frac{37}{64}+\gamma\right) \binom{n}3 & \textrm{if $k=4$}.
\end{cases}
\end{align*}
contains a set $\mathcal{T}$ of vertex-disjoint copies of $K^{k}_{k}(m)$ in $H$ covering all but at most $\gamma n$ vertices. 
\end{lma}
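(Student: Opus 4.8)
The plan is to follow Khan's proof of the perfect matching threshold and to observe that its ``almost perfect matching'' component already yields what we need, after one purely local modification. Because we only require an almost $K_k^k(m)$-factor and not a perfect one, no absorbing structure is needed, and in particular no distinction between extremal and non-extremal configurations is needed either; the whole argument runs through the weak regularity lemma for $k$-graphs.

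First I would apply the weak regularity lemma to $H$, obtaining a partition $V(H) = V_0 \cup V_1 \cup \dots \cup V_N$ with $|V_0| \le \epsilon n$, $|V_1| = \dots = |V_N| = M$, and all but at most $\epsilon N^k$ of the $k$-tuples $(V_{i_1}, \dots, V_{i_k})$ being $\epsilon$-regular, where $\epsilon \ll d \ll \gamma$ and $N \le N_0(\epsilon)$. Form the reduced $k$-graph $R$ on $[N]$ by placing an edge on each $k$-tuple that is $\epsilon$-regular with density at least $d$. A standard counting computation transfers the degree hypothesis: for $N$ large one gets $\delta_1(R) \ge (5/9 + \gamma/2)\binom{N}{2}$ when $k=3$, and $\delta_1(R) \ge (37/64 + \gamma/2)\binom{N}{3}$ when $k=4$.

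These are precisely the perfect matching thresholds for $k$-graphs of vertex degree, so they exceed $t^k_1(N,k)$ for $N$ large (by H\'{a}n--Person--Schacht~\cite{MR2496914} for $k=3$ and Khan~\cite{2011arXiv1101.5675K} for $k=4$). Hence, after discarding up to $k-1$ clusters to fix divisibility, $R$ contains a matching $\mathcal{M}$ covering all but at most $\gamma N / 2$ of its vertices. Each edge of $\mathcal{M}$ is an $\epsilon$-regular $k$-tuple $(V_{i_1}, \dots, V_{i_k})$ of density at least $d$, and I claim such a tuple admits a $K_k^k(m)$-tiling covering all but fewer than $k(\epsilon M + m)$ of its $kM$ vertices. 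Indeed, greedily remove copies of $K_k^k(m)$ from the tuple; each copy deletes exactly $m$ vertices from every part, so after $r$ copies each part has $M - rm$ vertices left, and as long as $M - rm \ge \epsilon M$ the $\epsilon$-regularity keeps the induced density above $d - \epsilon > 0$, whence the survivors span at least $(d-\epsilon)(\epsilon M)^k$ edges and, for $M$ large, contain a further $K_k^k(m)$ by Erd\H{o}s's theorem (Theorem~\ref{lma:erdos}). Summing these tilings over all edges of $\mathcal{M}$, the uncovered vertices are confined to $V_0$, to the clusters missed by $\mathcal{M}$, to the small leftover inside each tiled tuple, and to the finitely many discarded clusters; this totals at most $\epsilon n + (\gamma/2)n + (\epsilon n + N_0 m) + (k-1)M < \gamma n$ once $\epsilon$ is chosen small enough.

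The only content beyond routine bookkeeping is the transfer of the degree condition to $R$ and the fact that a dense $\epsilon$-regular $k$-tuple has an almost spanning $K_k^k(m)$-tiling; the former is a standard regularity estimate and the latter follows from Theorem~\ref{lma:erdos} by the lock-step greedy argument above. Thus the sense in which the statement ``is easily seen from'' Khan's proofs is simply that, in his regularity-based construction of an almost perfect matching, one may replace ``matching inside a regular tuple'' by ``$K_k^k(m)$-tiling inside a regular tuple'' with no other change, and one does not even need to re-enter his extremal-case analysis since only an almost-tiling (not a perfect one) is sought.
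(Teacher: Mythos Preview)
The paper does not give its own proof of this lemma; it simply attributes the statement to Khan and remarks that ``it is easily seen from his proofs (for the non extremal cases).'' Your sketch supplies exactly such an argument and is correct: apply weak hypergraph regularity, transfer the vertex-degree condition to the reduced $k$-graph $R$, find an (almost) perfect matching in $R$, and then tile each dense $\epsilon$-regular $k$-tuple almost completely with copies of $K_k^k(m)$ via Theorem~\ref{lma:erdos}. This is precisely the non-extremal part of Khan's argument with ``edge'' replaced by ``$K_k^k(m)$'' inside each regular tuple, and your observation that no extremal analysis or absorbing is required (since only an almost-factor is sought) is the right simplification.

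Two small points worth making explicit. First, to invoke the perfect-matching thresholds of \cite{MR2496914} and \cite{2011arXiv1101.5675K} on $R$ you need $N$ to be sufficiently large; this is handled by applying the regularity lemma with a prescribed lower bound $N_1$ on the number of clusters, chosen after the constants in those theorems. Second, the vertex-degree transfer to $R$ is indeed standard, but note that the raw regularity lemma only bounds the total number of irregular $k$-tuples, not the number through any fixed cluster; one uses the usual ``degree form'' refinement (moving the few bad clusters into $V_0$) to ensure $\delta_1(R)$ is as claimed. Neither point affects the validity of your outline.
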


We are now ready to prove Theorem~\ref{thm:k-partite}.

\begin{proof}[Proof of Theorem~\ref{thm:k-partite}]
Let $k = 3,4$.
Recall that $t^k_1(n,K^k_k(m)) \ge t^k_1(n,K^k_k)$, so it is enough to show the upper bound and assume that  and $m \ge 2$. 
Let $\gamma>0$ be a small constant.
Set $\alpha, \eta_1, \eta_2, \gamma_2>0$ be new small constants depending only on $k, m$ and $\gamma$, whose values will become clear later on.
Let $n_0$ be a sufficiently large integer.
Let $H$ be a $k$-graph of order $n \ge n_0$ with $km | n$ and 
\begin{align*}
\delta_{1}(H) \ge  
\begin{cases}
\left(\frac59 +\gamma\right) \binom{n}2 & \textrm{if $k=3$} \\
\left( \frac{37}{64}+\gamma\right) \binom{n}3 & \textrm{if $k=4$}.
\end{cases}
\end{align*}
We are going to show that $H$ contains a $K^k_k(m)$-factor.
We follow the algorithm for finding $F$-factors as stated in Section~\ref{sec:introduction}.

\smallskip
\noindent
\textbf{Step 1}
Let $\alpha$ be chosen sufficiently small such that $\alpha < \gamma$ and  $4 \alpha^2 / \gamma \le  c(k, m, \gamma)/2$ and $4 \alpha^2 km n /\gamma \le \min\{ \alpha/4, \gamma/8\} $, where $c( k, m,\gamma)$ is the constant given by Proposition~\ref{prp:K^k(l)}.
For distinct $x,y \in V(H)$, note that $|N(x) \cap N(y)| \ge 2 \gamma \binom{n}{k-1}$. 
If $(x,y)$ is $\alpha$-bad, then all but at most $\alpha \binom{n}{k-1}-1 $ of $S \in N(x) \cap N(y)$ satisfies $\deg(S) < \alpha n$.
Hence, for each $\alpha$-bad pair $(x,y)$, there are at least $(2 \gamma - \alpha) \binom{n}{k-1}$ $(k-1)$-sets $S \in N(x) \cap N(y)$ with $\deg(S) < \alpha n$.

Next, we claim that there are at most $\binom{n}{k-1}/2$ $(k-1)$-sets $S \in \binom{V}{k-1}$ with $\deg (S) < \alpha n$.
Otherwise,
\begin{align}
	n \left( \frac12 + \gamma \right)\binom{n}{k-1} \le \sum_{x \in V} \deg (x) & = \sum_{S \in \binom{V}{k-1}} \deg (S)
		< \frac{(1+\alpha)n}2 \binom{n}{k-1} \nonumber
\end{align}
a contradiction.
Recall that if $S$ is $\alpha$-bad for $(x,y)$, then $S \in N(x) \cap N(y)$ and $\deg(S) < \alpha n$.
Hence, each such $S$ is $\alpha$-bad for at most $\binom{\deg(S)}2 < \binom{ \alpha n}2$ pairs of vertices.
Therefore, the number of $\alpha$-bad pairs in~$H$ is at most
\begin{align}
	\frac{ \binom{\alpha n}2  \binom{n}{k-1}/2}{(2 \gamma -\alpha ) \binom{n}{k-1}} 
< \frac{\alpha^2}{\gamma} \binom{n}2. \nonumber
\end{align}
Hence, we have at most $4 \alpha^2 n /\gamma $ vertices that are in at least $n/4$ $\alpha$-bad pairs.
By Proposition~\ref{prp:K^k(l)} and a greedy algorithm, there exists a collection $\mathcal{T}_1$ of vertex-disjoint copies of~$K^k_k(m)$ covering these vertices (as $n$ is large) with $|V(\mathcal{T}_1)|\le 4 \alpha^2 km n /\gamma \le \min\{\alpha/4,\gamma/8\}n $.

Let $H_1 = H [V \setminus V(\mathcal{T}_1)]$ and $n_1 = |V(H_1)|$.
Note that for $x,y \in V(H_1)$ if $(x,y)$ is $\alpha$-good in~$H$, then $(x,y)$ is $(\alpha/2)$-good in~$H_1$.
Moreover, each vertex in $H_1$ is in at least $2n/3$ $(\alpha/2)$-good pairs. 
By Lemma~\ref{lma:alpha-good}, every vertex $x$ is $(K_k^k(m),1, \eta_1)$-close to at least $2n/3$ vertices in~$H_1$.
(Here $\eta_1$ is chosen to be sufficiently small such that $0<\eta_1 \le \eta_0(k,m,\alpha/2)$, where $\eta_0$ is the function given in Lemma~\ref{lma:alpha-good}.)
Hence, $|\widetilde{N}^{H_1}_{K^k_k(m),1 , \eta_1}(x) \cap \widetilde{N}^{H_1}_{K^k_k(m),1 , \eta_1}(y)| \ge n/3 > |n_1|/3$ for all $x,y \in V(H_1)$.
By Lemma~\ref{lma:bridge}, $H_1$ is $(K_k^k(m),2, \eta_2)$-closed provided that $\eta_2$ is sufficiently small.

\smallskip
\noindent
\textbf{Step 2} 
Set 
\begin{align*}
\gamma_2 = \frac{ (\eta_2/2)^{2km}} {2^7 k m(km-1)^2}
\end{align*}
We further assume that $\eta_2$ is small enough so that 
\begin{align*}
\frac{\left( \eta_2/{2} \right) ^{km}}{(4km(km-1))^2} \le \gamma /8
\quad \text{ and } \quad
\gamma_2 \le \gamma/2.
\end{align*}
Since $H_1$ is $(K_k^k(m),2, \eta_2)$-closed, there exists a vertex set $U$ in $H_1$ satisfying the conditions of the absorption lemma, Lemma~\ref{lma:absorptionlemma}.
Hence $|U| \le
\gamma n_1/8$ and $|U| \in km \mathbb{Z}$.
Moreover, there exists an $F$-factor in $H[U \cup W]$ for every vertex set $W \subseteq V(H_1) \setminus U$ of size $|W|  \le \gamma_2 n_1 $ with $ |W| \in km \mathbb{Z}$.
Set $H_2 = H[V(H_1) \setminus U]$ and $n_2 = |V(H_2)|$.

\smallskip
\noindent
\textbf{Step 3}
Recall that $|V(\mathcal{T}_1)|,|U| \le \gamma n/8$ and $\gamma_2 \le \gamma/2$.
Hence
\begin{align*}
\delta_1(H_2) \ge \delta_1(H) - (|V(\mathcal{T}_1)| + |U|) \binom{n}{k-2}
\ge \begin{cases}
\left(\frac59 + \gamma_2 \right) \binom{n_2}2 & \textrm{if $k=3$} \\
\left( \frac{37}{64}+ \gamma_2 \right) \binom{n_2}3 & \textrm{if $k=4$}.
\end{cases}
\end{align*}
Apply Lemma~\ref{lma:Khan} on $H_2$ with $\gamma = \gamma_2$ and obtain a set $\mathcal{T}_2$ of vertex-disjoint copies of $K_k^k(m)$ such that $|V(H_2) \setminus V(\mathcal{T}_2)|  \le \gamma_2 n_2$.

\smallskip
\noindent
\textbf{Step 4}
Set $W = V(H_2) \setminus V(\mathcal{T}_2) = V(H) \setminus ( V(\mathcal{T}_1) \cup V(\mathcal{T}_2) \cup U)$.
Recall that $n, |V(\mathcal{T}_1)|, |V(\mathcal{T}_2)|,|U| \in km \mathbb{Z}$.
So $|W| \in km \mathbb{Z}$.
Note that $H_1[U \cup W]$ contains a $K_k^k(m)$-factor $\mathcal{T}_3$ by the choice of $U$ and $W$.
Thus, $\mathcal{T}_1 \cup \mathcal{T}_2 \cup \mathcal{T}_3$ is a $K_k^k(m)$-factor in~$H$.
\end{proof}

\begin{rmk}
We believe that one can determine the exact values of $t_1^k(n,K_k^k(m))$ for $k=3,4$ and $m \ge 2$, by using a stronger version of Lemma~\ref{lma:Khan}.
\end{rmk}

\section{Some lower bounds on $t^k(n,t)$} \label{sec:construction}

In this section, we give some lower bounds on $t^k(n,t)$ by constructing suitable families of $k$-graphs. 
First, we show that $t^k(n,k+1) \ge 2n/3$ for $k \ge 4$ even.

\begin{prp} \label{prp:t^k(n,k+1)up}
For even integers $k\ge 4$, $t^k(n,k+1) \ge 2n/3$.
\end{prp}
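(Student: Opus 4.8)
The plan is to exhibit a $k$-graph $H$ on $n$ vertices with $\delta_{k-1}(H)$ just below $2n/3$ that contains no $K_{k+1}^k$-factor, using a divisibility/parity obstruction. Since $k$ is even, $k+1$ is odd, and I want to exploit a partition of $V(H)$ into parts whose sizes force any copy of $K_{k+1}^k$ to meet one part in an even number of vertices, while the total sizes do not allow all copies to cover everything.

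Concretely, I would partition $V = V(H)$ into three sets $A$, $B$, $C$ and declare a $k$-set $e$ to be an edge of $H$ exactly when $|e \cap A|$ is even (equivalently, restrict the "parity of the intersection with $A$" to be $0 \bmod 2$); one then chooses $|A|$ slightly larger than $n/3$ so that $\delta_{k-1}(H) \ge 2n/3 - O(1)$. The point is that every edge, hence every copy of $K_{k+1}^k$, meets $A$ in an even number of vertices, because all $k$-subsets of the $(k+1)$-set must be edges: a $(k+1)$-set $S$ with $|S \cap A| = a$ has $k$-subsets meeting $A$ in $a$ or $a-1$ vertices, and for all of these to be even we need $a$ even and $a - 1$ even, which is impossible unless $a = 0$ — so in fact every copy of $K_{k+1}^k$ lies entirely in $V \setminus A = B \cup C$. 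Then if $|A| \not\equiv 0 \pmod{k+1}$ (indeed as long as $|A| > 0$ and the remaining vertices are distributed so the count works), there is no way to cover the vertices of $A$, so no $K_{k+1}^k$-factor exists. I would pick $|A| = n/3$ (or the nearest value making the degree bound work and making $A$ nonempty with $(k+1) \nmid |A|$ impossible to avoid — e.g. $|A| \equiv 1$), ensuring $(k+1) \nmid |A|$ while $(k+1) \mid n$.

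I then need to check the minimum codegree. For a $(k-1)$-set $T$, $\deg(T)$ counts vertices $v$ with $|(T \cup v) \cap A|$ even; writing $a = |T \cap A|$, this is $|A| - a$ if $a$ is odd and $n - |A| - (k-1-a)$ if $a$ is even, so the minimum over $T$ is roughly $\min\{|A|, n - |A|\} - O(k)$, which with $|A| \approx n/3$ gives $\delta_{k-1}(H) \ge n/3 - O(k)$. That is only $n/3$, not $2n/3$, so the naive construction is too weak; the fix is to take the complementary parity or a more refined construction — declare $e$ an edge when $|e \cap A|$ is \emph{odd}, so that $\deg(T) = \min\{|A| - a, n-|A|-(k-1-a)\}$ type counts become $\min\{n - |A|, |A|\} $ shifted, and choosing $|A|$ near $2n/3$ makes $n - |A| \approx n/3$ — still $n/3$. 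To reach $2n/3$ one instead uses the standard construction: let $H$ consist of all $k$-sets whose intersection with a fixed set $A$ of size $\lfloor 2n/3 \rfloor$ has size $\ge 1$ of a prescribed parity pattern, OR more simply mimic the known lower bound $t^k(n,k)$ space-barrier constructions combined with parity; I would model it on Pikhurko's construction and on the space-barrier examples, taking $A$ with $|A| = 2n/3 - 1$ and edges those $k$-sets with an odd intersection with $A$ — then a $(k-1)$-set $T$ with $|T\cap A|$ even has $\deg(T) = |A| - (k-1 - |T\cap A|)\cdot 0 \dots$ — I would compute this carefully to confirm $\delta_{k-1} \ge 2n/3 - O(k)$, and verify that a $K_{k+1}^k$ forces a fixed parity of intersection with $A$ that is incompatible with $(k+1) \nmid |A|$.

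The main obstacle is getting both properties simultaneously: a parity constraint strong enough to block the factor typically costs codegree, so the delicate part is choosing the right Boolean condition on $|e \cap A|$ (and possibly a second part $B$) so that (i) the constraint is inherited by every $K_{k+1}^k$ as a congruence on $|S \cap A|$ that is violated for a divisibility reason, and (ii) the resulting codegree is $\ge 2n/3 - O_k(1)$. I expect the correct construction to take $A$ of size $\approx 2n/3$, edges = $k$-sets meeting $A$ in an odd number of vertices (so non-edges meet $A$ evenly, including entirely inside $V\setminus A$), giving $\delta_{k-1}(H) = \min\{|A|, \, n - |A| + \text{correction}\} - O(k) \ge 2n/3 - O(k)$ after optimizing which residues of $|T \cap A|$ are worst, and then observing every $K_{k+1}^k$ must have $|S \cap A|$ with $|S \cap A|$ and $|S \cap A| - 1$ both odd, which is impossible, so every copy avoids $A$ entirely — but $|A| = 2n/3 - 1 \not\equiv 0$ and cannot be covered. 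I would finish by confirming the arithmetic $(k+1)\mid n$, $|A| = 2n/3 - 1$ forces $A$ uncovered, and stating the bound $t^k(n,k+1) \ge 2n/3$ (up to the additive constant, which matches the statement's asymptotic phrasing).
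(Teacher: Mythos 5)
Your parity idea is on the right track, but the specific construction you converge on cannot work, and you in fact prove this to yourself midway through. With a single distinguished set $A$ and edges defined as $k$-sets whose intersection with $A$ has a prescribed parity, the codegree of a $(k-1)$-set $T$ is $|A| - O(k)$ when $|T\cap A|$ has one parity and $n - |A| - O(k)$ when it has the other; so $\delta_{k-1}(H) \approx \min\{|A|, n-|A|\}$, which is maximised at $n/2$ and is only about $n/3$ when $|A| \approx 2n/3$. Your final paragraph asserts that taking $|A| \approx 2n/3$ and ``odd intersection with $A$'' yields $\delta_{k-1} \ge 2n/3 - O(k)$, but this directly contradicts your own correct computation two paragraphs earlier. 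No one-part parity construction reaches codegree $2n/3$.

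The paper's construction gets around this with a three-part ``OR'' structure, which you should compare with your attempt. Partition $V$ into $V_1, V_2, V_3$ of sizes $\approx n/3$ with $|V_1| \not\equiv |V_2| \pmod 2$, and let the edges be all $k$-sets $S$ such that $|S\cap V_i|$ is odd for \emph{at least one} $i$. Since $k-1$ is odd, for any $(k-1)$-set $T$ either exactly one or all three of the $|T\cap V_i|$ are odd; in the first case $\deg(T)$ counts vertices in the two parts where $|T\cap V_i|$ is even (about $2n/3$ of them), and in the second case every vertex extends $T$ to an edge. Thus $\delta_{k-1}(H) \ge 2n/3 - 1$. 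The obstruction is also a parity count over all three parts: any copy $T$ of $K_{k+1}^k$ (with $k+1$ odd) must have all three $|T\cap V_i|$ odd, for if exactly one part had odd intersection, deleting a vertex of $T$ from that part would leave a $k$-set with all intersections even, which is not an edge. Summing over a putative $K_{k+1}^k$-factor would then force $|V_1|\equiv |V_2| \pmod 2$, contradicting the choice of part sizes. The key idea you are missing is precisely this disjunction over several parts, which boosts the codegree from $n/2$ (the limit of any single parity condition) to $2n/3$ while still leaving a parity invariant to violate.
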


\begin{proof}
We define a $k$-graph $H$ on $n$ vertices as follows.
Partition $V(H)$ into three sets $V_1$, $V_2$ and $V_3$ of roughly the same size such that $|V_1| \ne |V_2| \pmod{2}$.
A $k$-set $S$ is an edge of $H$ if $|S \cap V_i|$ is odd for some $1 \le i \le 3$.
Observe that $\delta_{k-1}(H) \ge 2n/3-1$.
We now claim that $H$ does not contain a $K_{k+1}^k$-factor.
Let $T$ be a $K_{k+1}^k$ in~$H$.
If $|T \cap V_1|$ is even say, then we may assume without loss of generality that $|T \cap V_2|$ is even and $|T \cap V_3|$ is odd as $|T|$ is odd.
However, $T \setminus v$ is not edge in~$H$ for every $v \in T \cap V_3$, a contradiction.
Therefore, $|T \cap V_i|$ is odd for all $1 \le i \le 3$.
Recall that $|V_1| \ne |V_2| \pmod{2}$, so $H$ does not contain a $K_{k+1}^k$-factor.
\end{proof}

Our next task is to prove Proposition~\ref{prp:lowerboundt^3(n,t)}, that is, to show that $t^3(n,t) \ge (1- 193(t-1)^{-2} \log (t-1))n$ for some constant~$C$ independent of $n$ and $t$, where we generalise a construction given in Proposition~1 of Pikhurko~\cite{MR2438870}.

\begin{proof}[Proof of Proposition~\ref{prp:lowerboundt^3(n,t)}]
Let $l = R(B_{t}, K^3_{t-1}) -1$.
Note that $t-1 \ge \log (t-1)^2$. 
By Theorem~\ref{thm:R(B_l,K_t)} taking $\lambda=t-1$, $c=1$ and $\eps = 1/3$, we have $l  = R(B_{t-1}, K^3_{t-1}) -1 \ge (t-1)^2/ (192 \log (t-1))$ for some absolute constant~$c_0>0$ independent of~$t$.
Let $H_0$ be the 3-graph on vertex set $[l]$ with maximum codegree $\Delta_2(H_0) < t$ and independent number $\alpha(H_0) < t-1$, which exists by the choice of~$l$.

Let $n_0$ be a sufficiently large integer and let $n \ge n_0$ with $t |n$.
Partition $[n]$ into $A_0$, $A_1$, \dots, $A_l$ of size $a_0, a_1, \dots, a_l$ respectively such that $a_0+a_1+\dots+a_l = n$, $a_0$ is odd and $a_0/(t-1), a_1, \dots, a_l$ are nearly equal, that is, $|a_0/(t-1) - a_i|, |a_i - a_j| \le 2$ for all $1 \le i , j \le l$.
Let $H$ be a 3-graph on vertex set $[n]$ with edges satisfying one of following (mutually exclusive) properties:
\begin{enumerate}
	\item[(a)] lie inside $A_0$,
	\item[(b)] have two vertices inside $A_i$ and one in $A_0$ for $1 \le i \le l$,
	\item[(c)] have one vertex in each of $A_{i_1}$, $A_{i_2}$ and $A_{i_3}$ with $i_1i_2i_3 \in E(H_0)$.
\end{enumerate}
We claim that $\overline{H}$, the complement of $H$, does not contain a $K_t^3$-factor.
Let $T$ be a $K^3_{t}$ in $\overline{H}$, so $T$ is an independent set of size $t$ in~$H$.
By~(a), $|T \cap A_0| <3$.
If $|T \cap A_0| =1$, then without loss of generality $|T \cap A_i| = 1$ for $1 \le i \le t-1$ by~(b).
Together with $(c)$, we deduce that $[t-1]$ is an independent set in $H_0$ contradicting the fact that $\alpha( H_0) < t-1$.
Thus, every $K^3_{t}$ in $\overline{H}$ contains either 0 or 2 vertices in $A_0$.
So there is no $K_{t}^3$-factor in $\overline{H}$ as $|A_0| = a_0$ is odd.
Furthermore, 
\begin{align}
	\Delta_2(H) & \le \max_{I \in \binom{[l]}{t-1} }\{a_0, \sum_{i \in I}a_i  \} 
 =   \frac{(t-1) n }{(t-1) + l} + 2(t-1) \nonumber \\
& \le  \frac{192 n \log (t-1)}{192 \log (t-1) + (t-1)^2} + 2(t-1) 
\le  \frac{193 n \log (t-1)}{(t-1)^2} -2
\nonumber
\end{align}
as $n$ is large.
Since $\delta_2(\overline{H}) \ge n-2 - \Delta_2(H)$, the proposition follows. 
\end{proof}

\section{$K_{k+1}^{k}$-factors } \label{sec:t^k(n,k+1)}

Here, we prove Theorem~\ref{thm:t^k(n,k+1)}, which bounds $t^k(n,k+1)$ from above for $k \ge 4$.
We are going to show that if $H$ is a $k$-graph with $\delta_{k-1}(H) \ge \left(1- \frac{k+\mathbf{1}_{k,\textrm{odd}}}{2k^2} + \gamma \right)n$, then $H$ contains a $K^k_{k+1}$-factor.
The proof can be split into two main steps:
\begin{itemize}
\item[(a)] Showing $H$ is $(K^k_{k+1},1,\eta)$-closed.
\item[(b)] Finding an almost $K^k_{k+1}$-factor.
\end{itemize}
Note that (b) was solved in \cite[Corollary 1.7]{multihajnalsze}.

\begin{prp}[\cite{multihajnalsze}] \label{prp:approximate}
Let $3 \le k \le t$ be integers.
Then, given any $\varepsilon, \gamma >0$, there exists an integer $n_0 = n_0(k,t,\eps, \gamma)$ such that every $k$-graph $H$ of order $n > n_0$ with
\begin{align}
{\delta}_{k-1}(H) \ge \left( 1-\binom{t-1}{k-1}^{-1} + \gamma \right) n \nonumber
\end{align}
contains a $K_t^k$-matching $\mathcal{T}$ covering all but at most $\varepsilon n$ vertices. 
\end{prp}

Next, we are going to verify (a), that is, show that $H$ is $(K^k_{k+1},1,\eta)$-closed.

\begin{lma} \label{lma:K^k_k+1close}
Let $k \ge 3$ be an integer and $ \gamma >0$.
Then there exist a constant $\eta_0 = \eta_0(k,\gamma)$ and an integer $n_0 = n_0(k,\gamma)$ such that every $k$-graph $H$ of order~$n \ge n_0$ with 
\begin{align}
\delta_{k-1}(H) \ge \left(1- \frac{k+\mathbf{1}_{k,\textrm{odd}}}{2k^2}+ \gamma \right)n \nonumber,
\end{align}
is $(K^k_{k+1},1,\eta)$-closed for all $0 < \eta \le \eta_0$, where $\mathbf{1}_{k,\textrm{odd}}=1$ if $k$ is odd and $\mathbf{1}_{k,\textrm{odd}}=0$ otherwise.
\end{lma}

\begin{proof}
Let $x$ and $y$ be distinct vertices of~$H$.
Let $G$ be the $(k-1)$-graph on vertex set $V(H) \setminus \{x,y\}$ and edge set $E(G) = N(x) \cap N(y)$.
So 
\begin{align}
	\delta_{k-2} (G) \ge \left(1- \frac{k+\mathbf{1}_{k,\textrm{odd}}}{k^2} + 2\gamma \right)n > \left(1-\frac1{k-1} + \gamma\right) n. \label{eqn:K^k_k+1close}
\end{align}
For each edge $S$ of $G$, $|\bigcap_{v \in S} N^G(S \setminus v)| \ge (k-1)\gamma n$.
(Here, $N^G(T)$ is the neighbourhood of $T$ in $G$.)
Note that, for each $w \in \bigcap_{v \in S} N^G(S \setminus v)$, $S \cup w$ forms a $K_k^{k-1}$ in $G$.
Hence, the number of $K_k^{k-1}$ in~$G$ is at least
\begin{align*}
	\left(1-\frac1{k-1} + \gamma\right) \binom{n}{k-1} \frac{(k-1)\gamma n }{k} \ge (k-2) \gamma \binom{n}{k}.
\end{align*}

Let $T =\{v_1, \dots, v_k\} \subseteq V(G)$ be a $K_k^{k-1}$ in $G$.
For $u \in V(G) \setminus T$, we claim that if a vertex $u$ is in $N^G(S)$ for at least than $\lfloor k(k-2)/2\rfloor +1$ sets $S \in \binom{T}{k-2}$, then $(T \cup u) \setminus v_i$ forms a $K_{k}^{k-1}$ in $G$ for some $1 \le i \le k$.
Indeed, this is true by considering the 2-graph $G'_u$ on $T$ such that $v_{i_1}v_{i_2} \in E(G'_u)$ if and only if $u \in N^G(T \setminus \{v_{i_1},v_{i_2}\})$.
Note that $v_i$ has degree $k-1$ in $G'_u$ if and only if $(T \cup u) \setminus v_i$ forms a $K_{k}^{k-1}$ in $G$.
Therefore, if $u$ is in $N^G(S)$ for more than $\lfloor k(k-2)/2\rfloor $ $(k-2)$-sets $S \in \binom{T}{k-2}$, then $e(G'_u) > \lfloor k(k-2)/2\rfloor$.
So $G'_u$ contains a vertex $v_i$ of degree at least $k-1$ in~$G'_u$, i.e. $(T \cup u) \setminus v_i$ forms a $K_{k}^{k-1}$ as claimed.

Given a copy $T =\{v_1, \dots, v_k\}$ of $K_k^{k-1}$ in $G$ and an integer $1 \le i \le k$, let $U_i$ be the set of vertices $u \in V(G) \setminus T$ such that $(T \cup u)  \setminus v_{i}$ forms a $K_{k}^{k-1}$ in~$G$. 
By an averaging argument, there exists $i_0$ such that 
\begin{align*}
	|U_{i_0}| & \ge \frac1k \left(\binom{k}{k-2} \delta_{k-2}(G) - \left\lfloor \frac{k(k-2)}2\right\rfloor n   \right) \\
& \ge \left(\frac{k+\mathbf{1}_{k,\textrm{odd}}}{2k^2} + (k-1)\gamma\right)n
\end{align*}
by~\eqref{eqn:K^k_k+1close}.
Note that $|N^H(T \setminus v_{i_0}) \cap U_{i_0}| \ge \delta_{k-2}(G) + |U_{i_0}| \ge k \gamma n$.
Moreover, for each $z \in N^H(T \setminus v_{i_0}) \cap U_{i_0}$, both $(T \cup \{x,z\})  \setminus v_{i_0}$ and $(T \cup \{y,z\})  \setminus v_{i_0}$ form $K_{k+1}^k$ in~$H$.

Since there are $(k-2) \gamma \binom{n}{k}$ choices for $T$ and $k \gamma n $ choices for~$z$ (given~$T$ and $i_0$).
The number of $k$-sets $T'$ such that $H[x \cup T']$ and $H[y \cup T']$ are copies of $K_{k+1}^k$ is at least
\begin{align*}
	\frac{(k-2) \gamma \binom{n}{k}}{n} \cdot k \gamma n \cdot \frac1k \ge \frac{(k-2)\gamma^2}{2k!} n^k.
\end{align*}
Thus, $x$ is $(K^k_{k+1},1,\eta)$-closed to $y$, where $\eta_0 = (k-2)\gamma^2/(2k!)$.
Since $x$ and $y$ are arbitrary, $H$ is $(K^k_{k+1},1,\eta_0)$-closed as required.
\end{proof}

We are now ready to prove Theorem~\ref{thm:t^k(n,k+1)}.
Note that the second assertion of the theorem is implied by Proposition~\ref{prp:t^k(n,k+1)up}, so it is enough to prove the first assertion.

\begin{proof}[Proof of Theorem~\ref{thm:t^k(n,k+1)}]
Let $k \ge 4$ be an integer and $\gamma >0$.
Let $H$ be a $k$-graph of order $n \ge n_0$ with 
\begin{align}
\delta_{k-1}(H) \ge \left(1- \frac{k+\mathbf{1}_{k,\textrm{odd}}}{2k^2}+ \gamma \right)n
\nonumber
\end{align}
and $(k+1)|n$.
Throughout this proof, we may assume that $n_0$ is chosen to be sufficiently large.
Lemma~\ref{lma:K^k_k+1close} implies that there exists $\eta_0>0$ such that $H$ is $(K_{k+1}^k,1,\eta)$-closed for all $0<\eta \le \eta_0$.
Pick $0<\eta \le \eta_0$ such that $(\eta/2)^{k+1}/ ( 4(k+1)k)  \le \gamma/2$.
Let $U$ be the vertex set given by Lemma~\ref{lma:absorptionlemma} and so $|U| \le \gamma/2$.
Let $H' = H[V \setminus U]$.
Note that 
\begin{align}
	\delta_{k-1}(H') \ge \left(1- \frac{k+\mathbf{1}_{k,\textrm{odd}}}{2k^2}+ \frac{\gamma}2 \right)n \ge \left(1-\frac1{k+1}+\frac{\gamma}2 \right)n' \nonumber
\end{align}
where $n' = |V(H')| =  n - |U|$.
Let $\eps = (\eta/2)^{2(k+1} / (32 (k+1) k^2 )$.
Apply Proposition~\ref{prp:approximate} taking $\gamma = \gamma /2$ and obtain a family $\mathcal{T}$ of vertex-disjoint copies of $K^{k}_{k+1}$ in $H'$ covering all but at most $\eps n'$ vertices.
Let $W = V(H') \setminus V(\mathcal{T})$, so $|W| \le \eps n' \le \eps n$.
Recall that $n, |V(\mathcal{T})|, |U| \in (k+1)\mathbb{Z}$, so $|W| \in (k+1)\mathbb{Z}$.
By Lemma~\ref{lma:absorptionlemma}, there exists a $K_{k+1}^k$-factor $\mathcal{T}'$ in $H[U \cup W]$.
Thus, $\mathcal{T} \cup \mathcal{T}'$ is a $K_{k+1}^k$-factor in~$H$.
\end{proof}


\section{$K^k_{t}$-factors for $t > k \ge 3$} \label{sec:connect}

Let $H$ be a $k$-graph of order~$n$ with $\delta_{k-1}(H) \ge \left(1- \beta(k,t) + \gamma \right)n $, where $\beta(k,t)$ is defined in Section~\ref{sec:introduction}.
Provided $n$ is sufficiently large, the next lemma shows that $H$ is actually $(K^k_{t},i, \eta)$-closed.

\begin{lma} \label{lma:(K^k_t,i,eta)close}
Let $3 \le k < t$ be integers and $0 < \gamma < \beta(k,t)$.
Then there exist integers $n_0 = n_0 (k,t, \gamma)$ and $i_0=i_0(k,t, \gamma)$ and a constant $\eta_0 = \eta_0(k,t, \gamma) >0$ such that every $k$-graph $H$ of order~$n \ge n_0$ with $\delta_{k-1}(H) \ge \left(1- \beta(k,t) + \gamma \right)n $ is $(K^k_{t},i_0,\eta)$-closed for all $0 < \eta \le \eta_0$.
\end{lma}

Observe that this lemma implies Theorem~\ref{thm:t^k(n,t)} by mimicking the proof of Theorem~\ref{thm:t^k(n,k+1)}, where we replace Lemma~\ref{lma:K^k_k+1close} with Lemma~\ref{lma:(K^k_t,i,eta)close}.
Thus, to prove Theorem~\ref{thm:t^k(n,t)}, it suffices to prove Lemma~\ref{lma:(K^k_t,i,eta)close}.
Its proof is divided into the following steps.
Let $H$ be a $k$-graph satisfying the lemma.
For $3 \le k < t$, define
\begin{align}
 d(k,t) & =  1 - \binom{t-1}{k-1} \beta(k,t). \label{eqn:d(k,t)}
\end{align}
Note that  $d(k,t) \ge 0$ for all $t > k \ge 3$ as $\beta(k,t)\le \binom{t-1}{k-1}^{-1}$.
Recall that $\widetilde{N}_{K_t^k,1,\eta_1}(v)$ is the set of vertices $u$ that are $(K_t^k,1,\eta_1)$-close to $v$.
First, we prove Proposition~\ref{prp:1connectionradius} showing that the size of $\widetilde{N}_{K_t^k,1,\eta_1}(v)$ is at least $(d(k,t)+ 2 \gamma)n$ for every vertex $v \in V(H)$.
Next, we partition $V(H)$ into bounded number of vertex classes $W_1, \dots,W_p$ such that $|W_j| \ge (d(k,t)+ \gamma)n$ and $W_j$ is $(K_t^k,i', \eta')$-closed in~$H$ for all $j \le p$, Lemma~\ref{lma:connectioncomponent}.
If there is only one vertex class, then Lemma~\ref{lma:(K^k_t,i,eta)close} holds.
So we may assume that $p \ge 2$.
Recall that for vertex sets $X,Y \subseteq V$, a triple $(x,y,S)$ is an $(X,Y)$-bridge of length $1$ with respect to $K_{t}^k$ if $x \in X$, $y \in Y$ and $S \in \binom{V(H)}{t-1}$ such that both $H[S \cup x]$ and $H[S \cup y]$ form copies of~$K_{t}^k$.
Using Lemma~\ref{lma:A_iB_j}, we show that there exist many say $(W_1,W_2)$-bridges.
By Lemma~\ref{lma:bridge}, we deduce that $W_1 \cup W_2$ is $(K_t^k,i'', \eta'')$-closed in~$H$.
Thus the number of vertex classes is reduced by one. 
Since $p$ is bounded, we can repeat this argument and merge all $W_i$'s into one class.


\subsection{Proof of Lemma~\ref{lma:(K^k_t,i,eta)close}} \label{sec:pfclose}

Let $H$ be a $k$-graph of order~$n$ with $\delta_{k-1}(H) \ge \left(1- \beta(k,t) + \gamma \right)n $.
Given an $s$-set $S$ with $s \ge k-1$, recall that for $L(S) = \bigcap \{ N(T) : T \in \binom{S}{k-1} \}$.
Moreover, if $S$ forms a $K_s^k$ in $H$, then $L(S)$ is precisely the set of vertices~$v$ such that $S \cup v$ forms a $K_{s+1}^k$ in~$H$.
We obtain the following simple proposition, whose proof is immediate and omitted.
\begin{prp} \label{prp:L(S)}
Let $3 \le k < t$ be integers and $0 < \gamma < \beta(k,t)$.
Let $H$ be a $k$-graph of order~$n$ with $\delta_{k-1}(H) \ge \left(1- \beta(k,t) + \gamma \right)n $.
Then for every $s$-set $S \subseteq V(H)$ with $s \ge k-1$,
\begin{align}
	|L(S)| & \ge \left( 1 - \binom{s}{k-1} (\beta(k,t)-\gamma) \right)n. \nonumber
\end{align}
In particular, for every $(t-1)$-set $T \subseteq V(H)$, $|L(T)| \ge ( d(k,t) + 3 \gamma)n$.
\end{prp}

Next we show that $\widetilde{N}_{K^k_{t},1,\eta}(v)$ is large for every $v \in V(H)$.

\begin{prp} \label{prp:1connectionradius}
Let $3 \le k < t$ be integers and $0 < \gamma < \beta(k,t)$.
Then there exist an integer $n_0 = n_0 (k)$ and a constant $\eta = \eta(k,t,\gamma)>0$ such that every $k$-graph $H$ of order~$n \ge n_0$ with $\delta_{k-1}(H) \ge \left(1- \beta(k,t) + \gamma \right)n $ satisfies  $|\widetilde{N}_{K^k_{t},1,\eta}(v)| \ge (d(k,t)+2\gamma)n$ for all $v \in V(H)$.
\end{prp}

\begin{proof}
Write $\delta$, $\beta$ and $d$ for $\delta_{k-1}(H)$, $\beta(k,t)$ and $d(k,t)$ respectively.
Set
\begin{align}
 	\eta = \frac{\gamma }{2(t-1)!} \prod_{k-1 \le s \le t-1}  \left(1 - \binom{s}{k-1}(\beta - \gamma) \right) \nonumber.
\end{align}
Recall that $\beta \le \binom{t-1}{k-1}^{-1}$, so $\eta >0$.
By a greedy algorithm and Proposition~\ref{prp:L(S)}, each vertex $v$ is contained in at least
\begin{align}
	& \frac{1}{(t-1)!} (n-1) \dots (n-k+2) \prod_{k-1 \le s \le t-1} \left( \left(1 - \binom{s}{k-1}(\beta - \gamma)\right)n \right) \nonumber \\
	& \ge \frac{n^{t-1}}{2(t-1)!} \prod_{k-1 \le s \le t-1}  \left(1 - \binom{s}{k-1}(\beta - \gamma)\right) = \gamma^{-1} \eta n^{t-1} \nonumber
\end{align}
copies of $K^k_{t}$.
Fix $v \in V(H)$.
Let $W$ is the set of $(t-1)$-sets $T$ in~$V(H)$ such that $T \cup v$ forms a~$K^k_{t}$.
So $|W| \ge \gamma^{-1} \eta n^{t-1}$.
Define $G$ to be the bipartite graph with vertex classes $V(H)$ and $W$ such that, for all $x \in V(H)$ and all $T \in W$, $(x,T)$ is an edge in $G$ if and only if $ T \cup x$ forms a $K^k_{t}$ in~$H$.
Note that for all $T \in W$, we have $d^G(T) = |L(T)|  \ge \left(d+ 3 \gamma \right) n$ by Proposition~\ref{prp:L(S)}.
We now claim that there are more than $(d+2\gamma) n$ vertices $x \in V(H)$ with degree (in $G$) $d^G(x) \ge \gamma |W| \ge \eta n^{t-1}$.
Indeed this is true, since otherwise
\begin{align}
\left(d+ 3\gamma \right) n |W|
& \le e(G) = \sum_{x \in V(H)} d^G(x)  \nonumber \\
& \le \gamma n \cdot (|W|-(d+ 2\gamma)n ) + |W| (d+ 2\gamma)n, \nonumber
\end{align}
a contradiction.
The proposition follows as $x$ is $(K^k_{t},1, d^G(x)/n^{t-1})$-close to $v$ for all $x \in V(H) \setminus v$.
\end{proof}

Now, we show that we can partition the vertex set $V(H)$ into $W_1, \dots, W_p$ such that each $W_j$ is $(K^k_{t},i, \eta)$-closed in $H$.

\begin{lma} \label{lma:connectioncomponent}
Let $3 \le k < t$ be integers and $0 < \gamma< \beta(k,t)$.
Then there exist integers $n_1 = n_1(k,t, \gamma)$ and $i_1' = i_1'(k,t, \gamma)$ and a constant $\eta_1' = \eta_1'(k,t, \gamma)>0$ satisfying the following:
Let $H$ be a $k$-graph of order $n \ge n_1$ with $\delta_{k-1}(H) \ge \left(1- \beta(k,t) + \gamma \right)n $.
Then, there exists a vertex partition of $V(H)$ into $W_1, \dots, W_p$ such that, for each $1 \le j \le p$, $|W_j| \ge (d(k,t)+\gamma/2) n$ and $W_j$ is $(K_{t}^k, i_1', \eta)$-closed in $H$ for all $0 < \eta \le \eta_1'$. 
In particular, $p \le  (d(k,t)+\gamma/2)^{-1} $.
\end{lma}

Here we present an outline of the proof.
Proposition~\ref{prp:1connectionradius} implies that $|N_{K^k_t,1,\eta}(v)| \ge (d(k,t) + 2 \gamma)n$ for all $v \in V(H)$.
Suppose there exists a vertex set $U \subseteq V(H)$ that is $(K_t^k,i, \eta')$-closed in~$H$ with $|U| \ge (d + \gamma/2)n$.
If a vertex $y \in V(H) \setminus U$ satisfies $|N_{K_t^k,1, \eta}(y) \cap U| \ge \gamma n$, then $U \cup y$ is $(K_t^k,i+1, \eta'')$-closed by Lemma~\ref{lma:bridge}.
Now assume that $|N_{K_t^k,1, \eta}(y) \cap U| < \gamma n$ for all $ y \in V(H) \setminus U$.
Let $V' = V(H) \setminus U$.
Hence, $|N_{K^k_t,1,\eta}(v)| \ge (d(k,t) +  \gamma)|V'| $ for all $v \in V'$.
This suggests that there should also exist a vertex set $U' \subseteq V'$ that is $(K_t^k,i, \eta')$-closed in~$H$ with $|U'| \ge (d + \gamma/2)n$.
Assume that $U'$ does exist and set $V'' = V' \setminus U'$.
We repeat the arguments on $V''$ and find a $(K_t^k,i+1, \eta'')$-closed vertex set $U'' \subseteq V''$.
We iterate this process.
Since $U, U',U'',\dots$ are pairwise disjoint and each of size at least $(d + \gamma/2)n$, this process will terminate in a finite number of rounds.

\begin{proof}[Proof of Lemma~\ref{lma:connectioncomponent}]
Write $\delta$, $\beta$ and $d$ for $\delta_{k-1}(H)$, $\beta(k,t)$ and $d(k,t)$ respectively.
Let $\eta_0 = \eta(k,t, \gamma)$ be given by Proposition~\ref{prp:1connectionradius}.
Set $i_0 = \lceil - \log_2 d \rceil + \lceil 2^{(d(k,t)+ \gamma/2)^{-1}+2}/\gamma \rceil$ and $i'_1 = 2^{i_0}$.
Throughout this proof, $\eta_1, \eta_2, \dots, \eta_{i_0}$ is assumed to be a decreasing sequence of strictly positive small constants.
The precise value of each $\eta_i$ will become clear later on.
We would like to point out that we choose $\eta_1, \eta_2, \dots, \eta_{i_0}$ in turns before choosing~$n_0$. 
Hence, for all $1 \le i \le i_0$, $\eta_i$ depends only $k,t, \gamma, \eta_0, \eta_1, \dots, \eta_{i-1}$, i.e. $\eta_i = \eta_i(k,t, \gamma)$.
Now choose $n_0$ to be sufficiently large.

We write $i$-close to mean $(K^k_{t},2^i,\eta_i)$-close.
Given a vertex $v \in V(H)$, define $\widetilde{N}_i(v)$ to be the set of vertices $u$ that are $i$-close to $v$.
So Proposition~\ref{prp:1connectionradius} implies that $|\widetilde{N}_0(v)| \ge (d+2\gamma )n$.
Moreover, by our choices of $\eta_i$ and Proposition~\ref{prp:i-closedadditive}, we further assume that $\widetilde{N}_{i+1}(v) \subseteq \widetilde{N}_i(v) $ for all $0 \le i < i_0$ and all $v \in V(H)$ provided $|\widetilde{N}_i(v)| \ge \gamma/2$.
Hence, if $W \subseteq V(H)$ is $i$-closed in $H$ for some $i \le i_0$, then $W$ is $i_0$-closed provided $|W| \ge \gamma/2$.

For $0 \le r < (d(k,t)+ \gamma/2)^{-1}$, suppose that we have already found disjoint vertex sets $W_1, \dots W_r$ in $V(H)$ such that 
\begin{itemize}
	\item[(i)] $|W_j| \ge (d+\gamma/2) n  $ for all $1 \le j \le r$.
	\item[(ii)] $W_j$ is $i_0$-closed in $H$ for all $1 \le j \le r$.
	\item[(iii)] Let $V_r= V(H) \setminus \bigcup_{1 \le j \le r} W_j$.
If $V_r \ne \emptyset$, then 
\begin{align*}
|\widetilde{N}_{0}(v) \cap V_{r}| & \ge (d + (2 - \sum_{1 \le s \le r}2^{-s})\gamma)n &\text{for all $v \in V_r$.}
\end{align*}

\end{itemize}
If $V_r = \emptyset$, then the lemma holds.
Hence, we may assume that $V_r \ne \emptyset$.
To prove the lemma, it is enough to show that there exists a vertex set $W_{r+1}$ in $V_{r}$ so that $W_1, \dots W_{r+1}$ also satisfy (i)--(iii). 
We are going find $W_{r+1}$ in $V_{r}$ as follows.


Let $i'$ be the largest integer such that 
\begin{align}
	|\widetilde{N}_{i'}(v) \cap V_r| & \ge (2^{i'} d + \gamma)n &\text{ for all $v \in V_r$.} \label{eqn:Ni'}
\end{align}
Note that $i'$ exists and $0 \le i' \le \lceil - \log_2 d \rceil$, where the lower bound is due to~(iii).
Let $v_0$ be a vertex in $V_r$ such that $|\widetilde{N}_{i'+1}(v_0)| < (2^{i'+1} d + \gamma)n$.
Define $U$ to be the set of vertices $u \in V_r \cap \widetilde{N}_{i'}(v_0)$ such that 
\begin{align*}
|\widetilde{N}_{i'}(u) \cap  \widetilde{N}_{i'+1} (v_0)| & \ge (2^{i'}d +2\gamma/3)n.
\end{align*}

\begin{clm} \label{clm:U}
The size of $U$ is at least $(2^{i'}d + \gamma/2)n$ and $U$ is $(i'+1)$-closed in~$H$.
\end{clm}

\begin{proof}[Proof of claim]
For distinct $u, u' \in U$,  we have 
\begin{align}
|\widetilde{N}_{i'}(u) \cap  \widetilde{N}_{i'} (u')|  
&\ge 
|\widetilde{N}_{i'}(u) \cap \widetilde{N}_{i'+1}(v_0)| + |\widetilde{N}_{i'}(u') \cap \widetilde{N}_{i'+1}(v_0)|- |\widetilde{N}_{i'+1}(v)|  \nonumber \\
& \ge  \gamma n/3  \nonumber
\end{align}
as $|\widetilde{N}_{i'}(v_0)| < (2^{i'+1} d + \gamma)n$.
Hence, $U$ is $(i'+1)$-closed in $H$ by Lemma~\ref{lma:bridge}.
To complete the proof of the claim it is enough to show that $|U| \ge (2^{i'}d + \gamma/2)n$.
Note that if a vertex $w \in V \setminus v_0$ satisfies $|\widetilde{N}_{i'}(w) \cap  \widetilde{N}_{i'} (v_0)| \ge \gamma^2n/6$, then $w$ is $(i'+1)$-close to~$v_0$ by Lemma~\ref{lma:bridge}.
Thus, 
\begin{align}
|\widetilde{N}_{i'}(v_0) \cap \widetilde{N}_{i'} (w)| & < \gamma^2 n/6 \nonumber &\text{for all $w \in V(H) \setminus \widetilde{N}_{i'+1}(v_0)$.}
\end{align}
Let $G$ be the auxiliary bipartite graph with vertex classes $\widetilde{N}_{i'}(v_0)$ and $V(H) \setminus \widetilde{N}_{i'+1}(v_0)$ such that $u \in \widetilde{N}_{i'}(v_0)$ is joined to $w \in V(H) \setminus \widetilde{N}_{i'+1}(v_0)$ if and only if $w \in \widetilde{N}_{i'}(u) $.
Hence, for each $u \in \widetilde{N}_{i'}(v_0)$, $d^G(u) = |\widetilde{N}_{i'}(u) \setminus \widetilde{N}_{i'+1}(v_0)|$.
For each $w \in V(H) \setminus \widetilde{N}_{i'+1}(v_0)$, $d^G(w) = |\widetilde{N}_{i'}(v_0) \cap \widetilde{N}_{i'} (w)| < \gamma^2 n/6$.
Therefore, we have
\begin{align}
 \sum_{u \in \widetilde{N}_{i'}(v_0)} |\widetilde{N}_{i'}(u) \setminus \widetilde{N}_{i'+1}(v_0)| 
& = \sum_{u \in \widetilde{N}_{i'}(v_0)}  d^G(u) = \sum_{w  \notin \widetilde{N}_{i'+1}(v_0)} d^G(w) 
\nonumber
\\
& < \gamma^2 n^2 /6.
\label{eqn:sumw}
\end{align}
Recall \eqref{eqn:Ni'} and the definition of $U$.
For $u' \in (V_r \cap \widetilde{N}_{i'}(v_0)) \setminus U$,
\begin{align*}
|\widetilde{N}_{i'}(u') \setminus \widetilde{N}_{i'+1}(v_0) | =  |\widetilde{N}_{i'}(u')| - |\widetilde{N}_{i'}(u') \cap \widetilde{N}_{i'+1}(v_0)| > \gamma n/3.
\end{align*}
By summing $|\widetilde{N}_{i'}(u') \setminus \widetilde{N}_{i'+1}(v_0) |$ over all $u' \in (V_r \cap \widetilde{N}_{i'}(v_0)) \setminus U$ and \eqref{eqn:sumw}, we have
\begin{align*}
	\gamma n   |(V_r \cap \widetilde{N}_{i'}(v_0)) \setminus U|  /3
& \le \sum_{u' \in (V_r \cap \widetilde{N}_{i'}(v_0)) \setminus U} |\widetilde{N}_{i'}(u') \setminus \widetilde{N}_{i'+1}(v_0)| \\
& \le \sum_{u \in \widetilde{N}_{i'}(v_0)} |\widetilde{N}_{i'}(u) \setminus \widetilde{N}_{i'+1}(v_0)|
 < {\gamma^2 n^2}/6.
\end{align*}
Hence, $|(V_r \cap \widetilde{N}_{i'}(v_0)) \setminus U| \le \gamma n/2$.
Since $|\widetilde{N}_{i'}(v_0) \cap V_r| \ge (2^{i'} d + \gamma)n$ by~\eqref{eqn:Ni'}, we deduce that $|U| \ge (2^{i'} d + \gamma/2)n$ as desired.
This completes the proof of the claim.
\end{proof}

Set $U_0 = U$.
For an integer $j \ge 1$, we define $U_{j}$ to be the set of vertices $u' \in V_r \setminus U'_{j-1}$ such that $|\widetilde{N}_{0} (u') \cap U'_{j-1}| \ge 2^{-r-2} \gamma n $, where $U'_{j'}$ is the set $\bigcup_{0\le i \le j'} U_i$.
By an induction on $j$, Lemma~\ref{lma:bridge} and Proposition~\ref{prp:i-closedadditive}, we deduce that $H[U'_j]$ is $(j+i'+1)$-closed in~$H$.

Let $j_0$ be the smallest integer such that $|U_{j_0}| < 2^{-r-2} \gamma n$.
Since $U_0, U_1, \dots, U_{j_0}$ are disjoint sets, $1 \le j_0 \le \lceil 2^{r+2} / \gamma \rceil$.
Set $W_{r+1} = U_{j_0}'$.
So $W_{r+1}$ is $(j_0+i'+1)$-closed in $H$.
Note that $|W_{r+1}| \ge |U| \ge (2^{i'}d + \gamma/2)n$ by Claim~\ref{clm:U}.
Since $j_0+i'+1 \le i_0$, $W_{r+1}$ is $i_0$-closed in $H$ by Proposition~\ref{prp:i-closedadditive}.
If $W_{r+1} = V_r$, then we are done.
Thus, we may assume that $W_{r+1} \ne V_r$.
Set $V_{r+1} = V_r \setminus W_{r+1}$.
For every $w \in V_{r+1}$,  we have 
\begin{align*}
	|\widetilde{N}_{0}(w) \cap V_{r+1}| & = |\widetilde{N}_{0}(w) \cap V_{r}|  - |\widetilde{N}_{0}(w) \cap U_{j_0-1}'|  - |U_{j_0}| \\
& \ge  (d + (2 - \sum_{1 \le s \le r} 2^{-s})\gamma)n - 2^{-r-2}\gamma n -  2^{-r-2} \gamma n\\
&	=  (d + (2 - \sum_{1 \le s \le r+1} 2^{-s})\gamma)n.
\end{align*}
This completes the proof of the lemma.
\end{proof}

Let $W_1, \dots, W_p$ be a partition of $V(H)$ given by Lemma~\ref{lma:connectioncomponent}.
We may assume $p \ge 2$, or else Lemma~\ref{lma:(K^k_t,i,eta)close} holds.
Set $X :=W_1$ and $Y:= W_2 \cup \dots \cup W_p$.
The next lemma shows that there are many $(X,Y)$-bridges of length~$1$ with respect to~$K^k_t$.
Its proof is rather long and involved, so we postpone the proof to Section~\ref{sec:A_iB_j}.
We would like to point out that the function $\beta(k,t)$ is defined so that this lemma holds.

\begin{lma} \label{lma:A_iB_j}
Let $3 \le k < t$ be integers and $0 < \gamma < \beta(k,t)$.
Then, there exist an integer $n_2 = n_2(k,t, \gamma)$ and a constant $\eps_2 =\eps_2(k,t,\gamma)>0$ such that the following holds.
Let $H$ be a $k$-graph of order $n \ge n_2$ with $\delta_{k-1}(H) \ge (1- \beta(k,t)+\gamma)n$.
Let $X$ and $Y$ be a partition of $V(H)$ with $|X|, |Y| \ge (d(k,t)+ \gamma )n$.
Then, the number of $(X,Y)$-bridges of length~$1$ with respect to~$K^k_t$ in $H$ is at least $\eps_2 n^{t+1}$.
\end{lma}

Assuming Lemma~\ref{lma:A_iB_j}, we are now ready to prove Lemma~\ref{lma:(K^k_t,i,eta)close}.

\begin{proof}[Proof of  Lemma~\ref{lma:(K^k_t,i,eta)close}]
Write $\beta$ and $d$ to be $\beta(k,t)$ and $d(k,t)$ respectively.
Let $i_1'$ and $\eta_1'$ be the constants given by Lemma~\ref{lma:connectioncomponent}.
Let $p_1 = \lfloor ( d+ \gamma/2)^{-1} \rfloor$.
Define $\eta_2', \eta_3', \dots, \eta_{p_1}>0$ to be sufficiently small constants, whose values will be become clear.
Throughout this proof, $n_0$ is assumed to a sufficiently large integer.
Let $H$ be a $k$-graph of order $n \ge n_0$ with $\delta_{k-1}(H) \ge \left(1- \beta+\gamma \right)n$. 
By Lemma~\ref{lma:connectioncomponent}, there exists a vertex partition of $V(H)$ into $W_1, \dots, W_p$ with $1 \le p \le p_1$ such that, for each $1 \le j \le p$, $|W_j| \ge (d+\gamma/2) n$ and $W_j$ is $(K_{t}^k, i_1', \eta_1')$-closed in~$H$.
For $1 \le j \le p$, we say that $i$-close to mean $(K_{t}^k, j(i_1'+1)-1, \eta_j')$-close.
Hence, each $W_j$ is $1$-closed in $H$.
We are going to show, by relabelling the $W_i$'s if necessary, that $W_1 \cup \dots \cup W_j$ is $j$-closed for all $1 \le j \le p$.
We proceed by induction of $j$.
This is true for $j=1$.
Suppose that we have already showed that that for $j < p$, $ W_1 \cup \dots \cup W_j$ is $j$-closed in~$H$.
Set $X = W_1 \cup \dots \cup W_j$ and $Y = W_{j+1} \cup \dots \cup W_p$.
So $|X|,|Y| \ge (d+\gamma/2)n$ and $X,Y$ is a partition of $V(H)$.
By Lemma~\ref{lma:A_iB_j} taking $\gamma = \gamma/2$, the number of $(X,Y)$-bridges of length~$1$ with respect to $K_t^k$ is at least $\eps_2 n^{t+1}$.
Since $Y = W_{j+1} \cup \dots \cup W_p$, by an averaging argument we may assume that the number of $(X,W_{j+1})$-bridges of length~$1$ with respect to $K_t^k$ is at least $\eps_2 n^{t+1}/p$.
Recall that $X$ is $j$-closed and $W_{j+1}$ is $1$-closed.
By Lemma~\ref{lma:bridge}, $X \cup W_{j'} $ is $(j+1)$-closed.
Therefore $V(H) = W_1 \cup \dots \cup W_p$ is $p$-closed as claimed.
Hence, by Proposition~\ref{prp:i-closedadditive}, $H$ is $p_1$-closed.
\end{proof}

\subsection{Proof of Lemma~\ref{lma:A_iB_j}} \label{sec:A_iB_j}

Let $H,X,Y$ be as defined in Lemma~\ref{lma:A_iB_j}.
Here, by a bridge we will mean an $(X,Y)$-bridge of length~$1$ with respect to $K_{t}^k$.
Our aim is to construct many bridges.
The proof of Lemma~\ref{lma:A_iB_j} is broken down into the following steps.
\begin{itemize}
	\item[\rm (i)] There are many copies of $T$ of $K^k_{t-1}$ satisfying one of the following:
		\begin{itemize}
			\item[\rm (a)] $|T \cap X| \le (t-1)/2$ and $|L(T) \cap X| \ge \eps n$ or
			\item[\rm (b)] $|T \cap Y| \le (t-1)/2$ and $|L(T) \cap Y| \ge \eps n$.
		\end{itemize}
		\item[\rm (ii)] For each $T$ satisfying (i), there exists $(\eps n)^2$ bridges $(x,y,T')$ with $T \subseteq T' \cup \{x,y\}$.
\end{itemize}
In the next proposition, we tackle~(i) for $k=3$.

\begin{prp} \label{prp:L(S)bad3}
Let $3 < t$ be an integer and $0 < \gamma < \beta(3,t)$.
Then there exist an integer $n_0 = n_0 (3,t, \gamma)$ and a constant $\eps = \eps(3,t, \gamma)>0$ satisfying the following:
Let $H$ be a $3$-graph of order~$n \ge n_0$ with $\delta_{2}(H) \ge \left(1- \beta(3,t) + \gamma \right)n $.
Let $X$ and $Y$ be a partition of $V(H)$ with $ (d(3,t) + \gamma) n \le |X| \le |Y| $.
Then there exists an integer $0 \le s < t/2-1$ such that there are at least $ \eps n^{t-1}$ copies $T$ of $K_{t-1}^3$ such that either 
\begin{itemize}
			\item[\rm (a)] $|T \cap Y| = s>0$ and  $|L(T) \cap Y| \ge \eps n$ for all $T$, 
			\item[\rm (b)] $|T \cap X| = s>0$ and $|L(T) \cap X| \ge \eps n$ for all $T$,
			\item[\rm (c)] $s =0$, $T \subseteq X$ and $|L(T) \cap Y| \ge \eps n$ for all $T$.
\end{itemize}
\end{prp}

\begin{proof}
We write $\beta$ and $d$ for $\beta(3,t)$ and $d(3,t)$ respectively.
Note that for every distinct $x,x' \in X$, $|N(xx') \cap Y| \ge \delta_2(H) - |X| \ge \gamma n$ as $\beta(3,t) \le 1/2$.
Therefore, there exists at least $\binom{dn}{2} \gamma n$ edges with 2 vertices in~$X$ and one in~$Y$.
Each edge can be greedily extended into a $K_t^k$ in many ways by Proposition~\ref{prp:L(S)}.
Therefore, the number of copies of $K_t^3$ with at least $2$ vertices in $X$ and one vertex in $Y$ is at least 
\begin{align}
	\frac{1}{t!} \binom{dn}{2} \gamma n  \prod_{3 \le i \le t-1} \left( 1 - \binom{i}{k-1} \beta +\gamma \right)n = t c n^t \nonumber
\end{align}
for some constant $c = c(3,t, \gamma) >0$.
By an averaging argument, there exists an integer $1 \le s' \le t-2$ such that there are at least $c n^t $ copies of $K_{t}^k$ with exactly $s'$ vertices in $Y$.
Let $\mathcal{U}$ be the set of $K_t^k$ with $|K_t^k \cap Y| = s'$.
Clearly, $|\mathcal{U}| \ge c n^t $.

Suppose that $1 \le s' \le t/2$.
Let $s = s'-1$.
For each $U \in \mathcal{U}$ there exists a $(t-1)$-set $T \subseteq U$ with $|T \cap Y| = s$.
By an averaging argument, there must exist at least $2 \eps n^t$ $(t-1)$-sets $T$ with $|T \cap Y| = s$
and $T$ is contained in at least $\eps n$ sets $U \in \mathcal{U}$.
Since $|L(T) \cap Y| \ge |\{U \in \mathcal{U}: T \subseteq U\}|$ for all $(t-1)$-sets with $|T \cap Y|=s$, we get either (a) or~(c). 
If $t/2< s' \le t-2$, then $2 \le |U \cap X|  \le t/2$ for all $U \in \mathcal{U}$.
By  a similar argument, we deduce case~(b).
\end{proof}

Next we consider $k \ge 4$.
For $4 \le k <t$, define $l_0(k,t)$ to be the largest integer $l$ such that $\beta(k,t) := \beta(k,t,l)$.
Hence, we have 
\begin{align}
(k-2)/2 & \le l_0(k,t) \le (t-2)/2, \nonumber \\
\beta(k,t) & \le  \frac{2}{\binom{t}{k-1}+ \binom{l_0(k,t)+1}{k-1}+\binom{t-l_0(k,t)-1}{k-1}}, \label{eqn:beta1} \\
\beta(k,t) & \le \frac{1}{\binom{t-1}{k-1} +\binom{l_0(k,t)}{k-1}}
\text{ and }  \beta(k,t) \le  \frac{1}{2 \binom{2 l_0(k,t) +1}{k-1}} . \label{eqn:beta2}
\end{align}
Notice that \eqref{eqn:beta1} is needed for Lemma~\ref{lma:case3} and \eqref{eqn:beta2} is needed for Proposition~\ref{prp:L(S)bad4}.
We now prove (i) when $k \ge 4$.

\begin{prp} \label{prp:L(S)bad4}
Let $4 \le k  < t$ be integers and $0 < \gamma < \beta(k,t)$.
Then there exist an integer $n_0 = n_0 (k,t, \gamma)$ and a constant $\eps = \eps(k,t, \gamma)>0$ satisfying the following:
Let $H$ be a $k$-graph of order~$n \ge n_0$ with $\delta_{k-1}(H) \ge \left(1- \beta(k,t) + \gamma \right)n $.
Let $X$ and $Y$ be a partition of $V(H)$ with $ (d(k,t) + \gamma) n \le |X| \le |Y| $.
Then there exists an integer $l_0(k,t) \le s < t/2-1$ such that there are at least $ \eps n^{t-1}$ copies $T$ of $K_{t-1}^k$ such that either 
\begin{itemize}
			\item[\rm (a)] $|T \cap Y| = s$ and  $|L(T) \cap Y| \ge \eps n$ for all $T$, 
			\item[\rm (b)] $|T \cap X| = s$ and $|L(T) \cap X| \ge \eps n$ for all $T$.
\end{itemize}
\end{prp}

\begin{proof}
We write $l_0$, $\beta$ and $d$ for $l_0(k,t)$, $\beta(k,t)$ and $d(k,t)$ respectively.
Note that $2(l_0 +1) \le t$.
By mimicking the proof of Proposition~\ref{prp:L(S)bad3}, in order to prove this proposition, it suffices to show that there are $c'n^{2(l_0 +1)}$ copies of $K_{2(l_0 +1)}^k$ with exactly $l_0+1$ vertices in each of~$X$ and~$Y$, 
where $c' = c'(k,t, \gamma)>0$.

By Proposition~\ref{prp:L(S)}, for all $1 \le s \le l_0$ and all $s$-sets $S \subseteq V(H)$, we have 
\begin{align*}
|L(S)| + |X| & \ge \left( 1 - \binom{s}{k-1} \beta +\gamma \right)n + (d+ \gamma)n \\
& \ge \left(2 - \left(\binom{l_0}{k-1} + \binom{t-1}{k-1} \right)\beta + 2\gamma \right)  n 
\ge (1 + 2 \gamma)n,
\end{align*}
where the last inequality is due to~\eqref{eqn:beta2}.
Hence, by a greedy algorithm, we can construct a $K_{l_0+1}^k$ in~$X$.
By Proposition~\ref{prp:L(S)}, for all $l_0+1 \le s \le 2l_0 +1$ and all $s$-sets $S\subseteq V(H)$, we have 
\begin{align*}
|L(S)| + |Y| & \ge \left( 1 - \binom{s}{k-1} \beta +\gamma \right)n + n/2 \\
& \ge \left( \frac32 - \binom{2l_0+1}{k-1} \beta +\gamma \right)n  \ge (1 + \gamma)n
\end{align*}
by~\eqref{eqn:beta2}.
Therefore, we can extend the $K_{l_0+1}^k$ in $X$ to a $K_{2(l_0+1)}^k$ with $l_0+1$ vertices in each of~$X$ and~$Y$.
In total, there are at least $(\gamma n )^t/t!$ copies $K_{2(l_0 +1)}^k$ with exactly $l_0+1$ vertices in each of~$X$ and~$Y$.
This completes the proof of the proposition.
\end{proof}

Now we are going to prove~(ii).
Note that $(x,y,T)$ is a bridge if both $T \cup x$ and $T \cup y$ form $K_t^k$ in~$H$.
Given a copy $T$ of $K^k_{t-1}$, $(x,y,T)$ is a bridge for every $x \in X \cap L(T)$ and $y \in Y \cap L(T)$.
We get the following simple proposition, the proof of which we omit.

\begin{prp} \label{prp:case1}
Let $3 \le k < t$ be integers.
Let $H$ be a $k$-graph of order $n$ such that $X$ and $Y$ form a partition of $V(H)$.
If there exists a copy $T$ of $K^k_{t-1}$ in $H$ with $|X \cap L(T)| , |Y \cap L(T)| \ge \eps n$, then the number of $(X,Y)$-bridges $(x,y,T)$ of length~$1$ with respect to~$K_{t}^k$ is at least $(\eps n)^{2}$.
\end{prp}

We now look at the special case when $K^k_{t-1}$ satisfies Proposition~\ref{prp:L(S)bad3}~(c) (and so $k =3$).

\begin{lma} \label{lma:case2}
Let $3 < t$ be an integer and $0 <2 \eps \le  \gamma < \beta(3,t)$.
Let $H$ be a $3$-graph of order~$n \ge n_0$ with $\delta_{2}(H) \ge \left(1- \beta(3,t) + \gamma \right)n $.
Let $X$ and $Y$ be a partition of $V(H)$ with $|X| \le |Y|$.
Let $T$ be a $K_{t-1}^3$ in $H$ such that $T \subseteq X$ and $|L(T) \cap Y| \ge \eps  n$.
Then, the number of $(X,Y)$-bridges $(x,y,T')$ of length~$1$ with respect to $K_{t}^k$
such that $(x,y,T')$ with $T \subseteq T' \cup \{x,y\}$ is at least $(\eps n)^2/(t-1)$.
\end{lma}

\begin{proof}
We write $\delta$, $\beta$ and $d$ for $\delta_2(H)$, $\beta(3,t)$ and $d(3,t)$ respectively.
Let $T = \{v_1, \dots, v_{t-1}\}$.
By Proposition~\ref{prp:case1}, we may assume that $|L(T) \cap X| \le \eps n-1$.
Proposition~\ref{prp:L(S)} implies that $|L(T)| \ge (d+3\gamma)n$.
Hence,
\begin{align*}
|L(T)\cap Y |= |L(T)| - |L(T) \cap X|  \ge (d+\gamma)n - \eps n \ge ( d+\eps )n.
\end{align*}
Pick $z \in L(T)\cap Y$.
For $i \le t-1$, let $T_i = T \setminus  v_{i}$.

First suppose that $|Y \cap L(T_i \cup z )| \ge \eps n$ for some~$i_0$.
Notice that $(v_{i_0}, y, T_{i_0} \cup z)$ is a bridge for every $y \in Y \cap L(T_{i_0} \cup z)$.
Since $z$ is arbitrarily chosen, the lemma holds if the following statement is true:
\begin{align}
|Y \cap L(T_i \cup  z )| & \ge \eps n &\text{for some $1 \le i \le t-1$.} \label{eqn:XXX} 
\end{align}

We now prove~\eqref{eqn:XXX} as follows.
For $1 \le i \le \binom{t-1}{2}$, let $n_i$ be the number of vertices $v$ such that $v$ is in the neighbourhoods of exactly $i$ sets~$S \in \binom{T}{2}$.
Note that
\begin{align}
\sum n_i = n \textrm{ and }\sum i n_i \ge \binom{t-1}{2} \delta \label{eqn:sumn_i2}.
\end{align}
Set $L = ( L(T)\cap Y ) \setminus z$, so $|L| = |L(T)| - |L(T) \cap X| - 1 \ge n_{\binom{t-1}{2}} - \eps n$.
Let $M$ be the set of vertices $v \in V \setminus (T \cup z)$ such that $v$ is in the neighbourhoods of exactly $ \binom{t-1}{2}-1$ sets $S \in \binom{T}{2}$.
This means that $|M| = n_{\binom{t-1}{2}-1}$.
Therefore, \eqref{eqn:sumn_i2} implies that
\begin{align}
	2|L| + |M| + 2 \eps n 
& \ge  2n - \binom{t-1}{2} (n - \delta). \label{eqn:XXX:2L+M}
\end{align}
Let $S = M \cap Y$, so 
\begin{align}
|S| \ge |M| - |X| \ge |M| -n/2. \label{eqn:XXX:M}
\end{align}
Set $S_1 = S \cap L(T_1)$ and for $2 \le i \le t-1$, $S_i = (S \cap L(T_i)) \setminus \bigcup_{j <i} S_j$.
Observe that $S_1, \dots, S_{t-1}$ form a partition of~$S$.
Note that $Y \cap L(T_1 \cup z)$ is the set $S_1 \cap \bigcap_{u \in T_1} N(z u)$.
We may assume that $|Y \cap L(T_1\cup z)|< \eps n$ or else \eqref{eqn:XXX} holds for $i=1$.
This implies that 
\begin{align*}
\eps n & \ge |Y \cap L(T_1\cup z)| = |S_1 \cap \bigcap_{u \in T_1} N(z u)| \\
& \ge  |S_1| -   \sum_{u \in T_1} |S_1 \setminus N(z u)| 
 \ge |S_1| -   \sum_{u \in T} |S_1 \setminus N(z u)|.
\end{align*}
Hence, $\sum_{u \in T} |S_1 \setminus N(z u )| \ge |S_1| - \eps n$ and similar inequalities also hold for $2 \le i \le t-1$.
Since $S = S_1 \cup \dots \cup S_{t-1}$, we have 
\begin{align}
	 \sum_{u \in T} |S \setminus N(z u )| \ge |S| - (t-1)\eps n. \label{eqn:XXX:|Si|}
\end{align}
Recall that $S \subseteq M$ and $M \cap L = \emptyset$.
Therefore, for each $u \in T$,
\begin{align}
	|L \cap N(z u)| & = |L| - |L \setminus N(zu)| 
\ge |L| - |V(H) \setminus N(zu)| + |S\setminus N(z u)| \nonumber \\
& \ge |L| +\delta - n  + |S\setminus N(z u)| . \label{eqn:|LN(zv_j)|}
\end{align}
Suppose that there exists a vertex $y \in L$ is in $N(zu)$ for all but one $u \in T$.
Without loss of generality, $y \in N(zv_i)$ for $1 \le i \le t-2$. 
Then $T_{t-1}\cup \{ z,y\}$ forms a $K^3_t$ in~$H$ and so $y \in L \cap L( T_{t-1} \cup z )$.
This means that 
\begin{align}
	  & 2 |L \cap \bigcup_{1 \le j \le t-1} L( T_i \cup z )| \ge  \sum_{u \in T } { |L \cap N(z u)|} - \left( t-3 \right) |L| \nonumber \\
	   & \overset{\eqref{eqn:|LN(zv_j)|}}{\ge}  2|L| - (t-1)(n-\delta) +   \sum_{u \in T} |S \setminus N(z u)| \nonumber \\ 
	 & \overset{\eqref{eqn:XXX:|Si|}}{\ge}  2|L| - (t-1)( n-\delta ) + |S| - (t-1) \eps n \nonumber \\
 	& \overset{\eqref{eqn:XXX:M}}{\ge} 2|L| - (t-1)(n-\delta) + |M|- n/2  - (t-1) \eps n\nonumber\\
	& \overset{\eqref{eqn:XXX:2L+M}}{\ge} (3/2 - (t+2) \eps ) n - \binom{t}{2} (n- \delta) \nonumber 
	{\ge} 2(t-1) \eps n,
\end{align}
where the last inequality holds, as $n- \delta = (\beta - \delta)n$ and $\beta = 2/(t^2 -3t+4) \le 3/(t^2-t)$. 
Hence, $|Y \cap L(T_i \cup z )| \ge |L \cap L(T_i \cup z)| \ge \eps n$ for some $1 \le i \le t-1$.
Therefore \eqref{eqn:XXX} holds completing the proof of the lemma.
\end{proof}

Next, we are going to consider the cases (a) and (b) in Proposition~\ref{prp:L(S)bad3} and Proposition~\ref{prp:L(S)bad4}.
Here, we set $l_0(3,t) = 1$, so $\beta(3,t)$ also satisfies~\eqref{eqn:beta1}.
The proof is based on proof of Lemma~\ref{lma:case2}.
We would like to point out the lemma below does not assume $|X| \le |Y|$.
We will also need the following inequality, for $s \le  (t-1)/2$, 
\begin{align}
\binom{s}{k-1} +\binom{t-s}{k-1}  \ge \binom{s+1}{k-1} +\binom{t-s-1}{k-1}. \label{eqn:sl}
\end{align}

\begin{lma} \label{lma:case3}
Let $3 \le k < t$ be integers and $0 <\eps \le  \gamma < \beta(k,t)$.
Let $H$ be a $k$-graph of order~$n \ge n_0$ with $\delta_{k-1}(H) \ge \left(1- \beta(k,t) + \gamma \right)n $.
Let $X$ and $Y$ be a partition of $V(H)$ with $|X|, |Y|$.
Let $T$ be a $K_{t-1}^k$ in $H$ such that $l_0(k,t) \le |T \cap Y| \le t/2-1$ and $|L(T) \cap Y| \ge \eps  n$.
Then the number of $(X,Y)$-bridges $(x,y,T')$ of length~$1$ with respect to $K_{t}^k$
such that $(x,y,T')$ with $T \subseteq T' \cup \{x,y\}$ is at least $\eps^2 n^2/t$.
\end{lma}

\begin{proof}
We write $\delta$, $l_0$, $\beta$ and $d$ for $\delta_{k-1}(H)$, $l_0(k,t)$, $\beta(k,t)$ and $d(k,t)$ respectively.
Let $T = \{v_1, \dots, v_{t-1}\}$ such that
\begin{align}
T_X = \{v_1, \dots, v_r\} \subseteq X \textrm{ and } T_Y = \{v_{r+1}, \dots, v_{r+s}\} \subseteq Y. \nonumber
\end{align}
So $r+s+1=t$ and  $l_0 \le s\le t/2-1$.
By Proposition~\ref{prp:case1}, we may further assume that $|L(T) \cap X| \le \eps n-1$.
Proposition~\ref{prp:L(S)} implies that $|L(T)\cap Y| \ge  |L(T)| - |L(T) \cap X| \ge \eps n$.
Pick $z \in L(T) \cap Y$.
For $i \le t-1$, let $T_i = T \setminus  v_{i}$.

First assume that  one of the following statements holds:
\begin{itemize}
\item[(a)] $|Y \cap L(T_{i} \cup z)| \ge \eps n$ for some $1 \le i \le r$, or
\item[(b)] $|X \cap L(T_{r+j}\cup z)| \ge \eps n$ for some $1 \le j \le s$.
\end{itemize}
If (a) holds say for $i =1$, then $(v_1, y, T_1 \cup z)$ is a bridge for every $y \in Y \cap L(T_1 \cup z)$.
Similarly, if (b) holds say for $j=1$, then $(x, v_{r+1}, T_{r+1} \cup z)$ is a bridge for every $x \in X \cap L(T_{r+1} \cup z)$.
Since $z$ is chosen arbitrarily, the lemma holds provided that we can prove (a) or (b) holds.

For $1 \le i \le \binom{t-1}{k-1}$, let $n_i$ be the number of vertices $v$ such that $v$ is in the neighbourhoods of exactly $i$ sets of~$S \in \binom{T}{k-1}$.
Note that
\begin{align}
\sum n_i = n \textrm{ and }\sum i n_i \ge \binom{t-1}{k-1} \delta \label{eqn:sumn_i}.
\end{align}
Set $L = ( L(T)\cap Y ) \setminus z$, so $|L| = |L(T)| - |L(T) \cap X| - 1 \ge n_{\binom{t-1}{k-1}} - \eps n$.
Let $M$ be the set of vertices $v \in V \setminus (T \cup z)$ such that $v$ is in the neighbourhoods of exactly $ \binom{t-1}{k-1}-1$ sets $S \in \binom{T}{k-1}$.
This means that $|M| = n_{\binom{t-1}{k-1}-1}$.
Therefore, \eqref{eqn:sumn_i} implies that
\begin{align}
	2|L| + |M| + 2 \eps n & \ge 2 n_{\binom{t-1}{k-1}} +n_{\binom{t-1}{k-1}-1} \ge
  \sum i n_i -\left( \binom{t-1}{k-1}-2\right) \sum n_i \nonumber
\\
& \ge  2n - \binom{t-1}{k-1} (n - \delta). \label{eqn:2L+M}
\end{align}
Define $R =  M \cap X \cap L(T_X)$ and $S= M \cap Y \cap L(T_Y)$.
Note that $|R| \ge |M \cap X| - \binom{r}{k-1} (n- \delta)$ and a similar inequality for $|S|$.
Thus, we have
\begin{align}
	|R| + |S| & \ge |M| - \left( \binom{r}{k-1} + \binom{s}{k-1} \right) (n- \delta) . \label{eqn:R+S=M}
\end{align}
Set $S_1 = S \cap L(T_1)$ and for $2 \le i \le r$, set $S_i = (S \cap L(T_i)) \setminus \bigcup_{j <i} S_j$.
Observe that $S_1, \dots, S_r$ partition~$S$.
Note that $Y \cap L(T_1 \cup z)$ is the set $S_1 \cap \bigcap_{U \in \binom{T_1}{k-2}} N(z \cup U)$.
We may assume that $|Y \cap L(T_1\cup z)| \le \eps n$ or else (a) holds for $i=1$.
This implies that 
\begin{align*}
\eps n & \ge |Y \cap L(T_1\cup z)| = |S_1 \cap \bigcap_{U \in \binom{T_1}{k-2}} N(z \cup U)| \\
& \ge  |S_1| -   \sum_{U \in \binom{T_1}{k-2}} |S_1 \setminus N(z \cup U )| 
 \ge |S_1| -   \sum_{U \in \binom{T}{k-2}} |S_1 \setminus N(z \cup U )| 
\end{align*}
By a similar argument, we may assume that for all $1 \le i \le r$,
\begin{align}
	 \sum_{U \in \binom{T}{k-2}} |S_i \setminus N(z \cup U )| \ge |S_i| - \eps n. \label{eqn:XYY:|Si|}
\end{align}
Next, we are going to obtain the analogous statement of \eqref{eqn:XYY:|Si|} for~$R$.
Set $R_1 = R \cap L(T_{r+1})$ and for $2 \le j \le s$, $R_j = ( R \cap L(T_{r+j})) \setminus \bigcup_{i <j} R_i$.
Again, $R_1, \dots, R_s$ form a partition of~$R$.
Furthermore, we also may assume that for all $1 \le j \le s$,
\begin{align}
	 \sum_{U \in \binom{T}{k-2}} |R_j \setminus N(z \cup U )| \ge |R_j| - \eps n. \label{eqn:XYY:|Rj|}
\end{align}
or else (b) holds. 
Recall that $\bigcup S_i \cup \bigcup R_j = S \cup R \subseteq M$.
Now we sum \eqref{eqn:XYY:|Si|} overall all $1 \le i \le r$ and sum \eqref{eqn:XYY:|Rj|} overall all $1 \le j \le s$, we get
\begin{align}
	 \sum_{U \in \binom{T}{k-2}} |(S \cup R) \setminus N(z \cup U )| & \ge |R|+|S| - (t-1)\eps n. 
 \label{eqn:XYY:|M|}
\end{align}
Recall that $M \cap L = \emptyset$, so $(S \cup R) \cap L = \emptyset$. 
For any $(k-2)$-sets $U \in \binom{T}{k-2}$,
\begin{align}
	|L \setminus N(z \cup U)| 
& \le n - \delta - |(S \cup R) \setminus N(z \cup U)|  .
\nonumber
\end{align}
Let $L'$ be the set $L \cap L(T_Y \cup z)$, so
\begin{align}
|L'| & =  |L \cap \bigcap_{U \in \binom{T_Y}{k-2}} N(z \cup U)|
\ge  |L| - \sum_{U \in \binom{T_Y}{k-2}} |L \setminus N(z \cup U)| \nonumber \\
& \ge 
 |L| - \binom{s}{k-2}(n- \delta) + 
 \sum_{U \in \binom{T_Y}{k-2}} |(S \cup R) \setminus N(z \cup U)|.
\label{eqn:|L'|}
\end{align}
In addition, for each $U \in \binom{T}{k-2} \setminus \binom{T_Y}{k-2}$, we have
\begin{align}
	|L' \cap N(z \cup U)| & \ge |L'|  - |L \setminus N(z \cup U)| \nonumber \\
 & \ge |L'| +\delta - n + |(S \cup R) \setminus N(z \cup U)| . \label{eqn:|L'N(zv_j)|}
\end{align}
Next we claim that $ 2 |L \cap \bigcup_{1 \le i \le r} L(T_i \cup z)|$ is at least
\begin{align}
	   \sum_{U \in \binom{T}{k-2} \setminus \binom{T_Y}{k-2}} |L' \cap N(z \cup U)| - \left( \binom{t-1}{k-2} - \binom{s}{k-2}-2 \right) |L'| \label{eqn:XYY:LL}.
\end{align}
Note that $|\binom{T}{k-2} \setminus \binom{T_Y}{k-2}|$ is $\binom{t-1}{k-2} - \binom{s}{k-2}$.
Therefore, \eqref{eqn:XYY:LL} is at most twice the number of vertices $y \in L'$ such that $y$ is in the neighbourhood of $N(z \cup U)$ for all but at most one $U \in \binom{T}{k-2} \setminus \binom{T_Y}{k-2}$.
Let $y$ be such a vertex.
In order to show that \eqref{eqn:XYY:LL} holds, it is enough to show that $y \in L \cap L(T_i \cup z)$ for some $1 \le i \le r$.
Note that $y \in L' \subseteq L$, so it suffices to show that there exists $1 \le i \le r$ such that $y \in N(U \cup z)$ for all $U \in \binom{T_i}{k-2}$.
Let $\mathcal{U}$ be the set of $U \in \binom{T}{k-2}$ such that $y \notin N(z \cup U)$.
If $\mathcal{U} = \emptyset$, then $y \in L(T_1 \cup z)$.
We may assume that $\mathcal{U} \ne \emptyset$.
By the choice of~$y$, at most one $U \in \binom{T}{k-2} \setminus \binom{T_Y}{k-2}$ belongs in $\mathcal{U}$.
Since $y \in L' \subseteq L(T_Y \cup z)$, $\binom{T_Y}{k-2} \cap \mathcal{U} = \emptyset$.
Therefore $\mathcal{U} =\{U\}$ for some $U \in \binom{T}{k-2} \setminus \binom{T_Y}{k-2}$.
Without loss of generality we may assume that $v_1 \in U$.
Therefore, $y \in N(U \cup z)$ for all $U \in \binom{T_1}{k-2}$.
This implies that $y \in L \cap L(T_i \cup z)$ and so \eqref{eqn:XYY:LL} holds.

Finally, we deduce that 
\begin{align*}
& 2 |L \cap \bigcup_{1 \le i \le r} L(T_i )| \\
& \overset{\eqref{eqn:XYY:LL}}{\ge} 
\sum_{U \in \binom{T}{k-2} \setminus \binom{T_Y}{k-2}} |L' \cap N(z \cup U)| - \left( \binom{t-1}{k-2} - \binom{s}{k-2}-2 \right) |L'|\\
& \overset{\eqref{eqn:|L'N(zv_j)|}}{\ge} 
2 |L'| -  \left( \binom{t-1}{k-2} - \binom{s}{k-2} \right) (n-\delta)  + \sum_{U \in \binom{T}{k-2} \setminus \binom{T_Y}{k-2} } |(S \cup R) \setminus N(z \cup U)|\\
& \overset{\eqref{eqn:|L'|}}{\ge }
2 |L| -  \left( \binom{t-1}{k-2} + \binom{s}{k-2} \right) (n-\delta) +  \sum_{U \in \binom{T}{k-2}} |(S \cup R)\setminus N(z \cup U)|\\
& \overset{\eqref{eqn:XYY:|M|}}{\ge } 
2 |L| -  \left( \binom{t-1}{k-2} + \binom{s}{k-2} \right) (n-\delta) + |R|+|S| - (t-1)\eps n\\
& \overset{\eqref{eqn:R+S=M}}{\ge } 
2 |L| + |M| -  \left( \binom{t-1}{k-2} +  \binom{s+1}{k-1} + \binom{r}{k-1} \right) (n-\delta) - (t-1)\eps n\\
& \overset{\eqref{eqn:2L+M}}{\ge } 
(2- (t+1)\eps)n -  \left( \binom{t}{k-1} +  \binom{s+1}{k-1} + \binom{r}{k-1} \right) (n-\delta)\\
& \overset{\eqref{eqn:sl}}{\ge }
(2- (t+1)\eps)n -  \left( \binom{t}{k-1} +  \binom{l_0+1}{k-1} + \binom{t-1-l_0}{k-1} \right) (n-\delta)\\
& \overset{\eqref{eqn:beta1}}{\ge } 
2(t-1)\eps n
\end{align*}
This means that (a) holds for some $1 \le i \le r$.
This completes the proof of lemma.
\end{proof}

Finally we prove Lemma~\ref{lma:A_iB_j}.

\begin{proof}[Proof of  Lemma~\ref{lma:A_iB_j}]
We will only prove the lemma when $k=3$ (since the argument for $k \ge 4$ is immediate by replacing Proposition~\ref{prp:L(S)bad3} with Proposition~\ref{prp:L(S)bad4}).
We write $\delta$, $\beta$ and $d$ for $\delta_2(H)$, $\beta(3,t)$ and $d(3,t)$ respectively.
Without loss of generality, we may assume that $|X| \le |Y|$.
By Proposition~\ref{prp:L(S)bad3}, there exist a constant $\eps = \eps(3,t,\gamma)>0$ and an integer $0 \le s < t/2-1$ such that there exist at least $\eps n^{t-1}$ copies $T$ of $K_{t-1}^3$ such that either 
\begin{itemize}
			\item[\rm (a)] $|T \cap Y| = s>0$ and  $|L(T) \cap Y| \ge \eps n$ for all $T$, 
			\item[\rm (b)] $|T \cap X| = s>0$ and $|L(T) \cap X| \ge \eps n$ for all $T$, or
			\item[\rm (c)] $s =0$, $T \subseteq X$ and $|L(T) \cap Y| \ge \eps n$ for all $T$.
\end{itemize}
Assume that $\eps < 2\gamma$ and set $\eps_2 = \eps^3/((t-1)(t+1)!)$.
If $s=0$ and so (c) holds, then Lemma~\ref{lma:case2} implies that each such $T \subseteq X$ generate at least $(\eps n)^2/(t-1)$ bridges $(x,y, T')$ with $T \subseteq T' \cup \{x,y\}$.
In total, there are at least $\eps^3 n^{t+1}/(t-1)$ bridges with multiplicities at most $(t+1)!$.
Hence, the lemma holds.
If $1 \le s < t/2-1$, then the lemma holds by a similar argument and Lemma~\ref{lma:case3}.
\end{proof}

\section{Remarks on Theorem~\ref{thm:t^k(n,t)} } \label{sec:remarks}

First we prove Corollary~\ref{cor:main} by evaluating $\beta(k,t)$ together with Theorem~\ref{thm:t^k(n,t)}.
\begin{proof}[Proof of Corollary~\ref{cor:main}]
(i) First suppose that $4 \le k < t < 3k/2 - 1$.
If $(k-2)/2 \le l \le (t-2)/2$, then $l+1, t-l-1 <k-1$.
Hence, we have 
\begin{align*}
	\beta(k,t,l) = \min \left\{ \frac{2}{\binom{t}{k-1}} , \frac{1}{\binom{t-1}{k-1}},
\frac{1}{2\binom{2l+1}{k-1}} \right\} 
= \min \left\{ \frac{2}{\binom{t}{k-1}} ,
\frac{1}{2\binom{2l+1}{k-1}} \right\}.
\end{align*}
Therefore, $\beta(k,t) = {2}/{\binom{t}{k-1}}$ as $t > k+1$.
Hence, Corollary~\ref{cor:main}~(i) holds by Theorem~\ref{thm:t^k(n,t)}.

(ii) Suppose that $k \ge 6$ and $2t \ge 3(k-1) + \sqrt{5k^2 -22k+25}$.
Note that, for $(k-2)/2 \le l \le (t-2)/2$,
\begin{align*}
	& \frac{2}{\binom{t}{k-1}+ \binom{l+1}{k-1}+\binom{t-l-1}{k-1} } 
	 \overset{\eqref{eqn:sl}}{\ge}  \frac{2}{\binom{t}{k-1}+ \binom{t- k/2 }{k-1}} \ge \frac{2}{\binom{t}{k-1}+ \binom{t- 3 }{k-1}}
	\ge \binom{t-1}{k-1}^{-1}.
\end{align*}
The last inequality holds because 
\begin{align*}
2 \binom{t-1}{k-1} & = \binom{t}{k-1} + \frac{(t-1)(t-2)(t-2k+2)}{(t-k+1)(t-k)(t-k-1)} \binom{t-3}{k-1}\\
 & \ge \binom{t}{k-1} + \binom{t-3}{k-1}.
\end{align*} 
Thus, $\beta(k,t,l)$ becomes
\begin{align*}
	\beta(k,t,l) = \min \left\{ \frac{1}{\binom{t-1}{k-1}+ \binom{l}{k-1}},
\frac{1}{2\binom{2l+1}{k-1}} \right\} \le \frac{1}{\binom{t-1}{k-1}}
\end{align*}
and so $\beta(k,t) = \beta(k,t, \lceil(k-2)/2 \rceil ) = \binom{t-1}{k-1}^{-1}$.
Hence, Corollary~\ref{cor:main}~(ii) holds by Theorem~\ref{thm:t^k(n,t)}.
\end{proof}

\section{Closing remarks}

It is likely that the bounds given by Corollary~\ref{cor:main} are not optimal. 
A strengthening of Lemma~\ref{lma:(K^k_t,i,eta)close} would lead to a better bound for Corollary~\ref{cor:main}~(i).
However, more work is required in order to improve the bound given by Corollary~\ref{cor:main}~(ii).
Suppose that $k,t,n$ satisfy Corollary~\ref{cor:main}~(ii).
Recall that in order to find a $K_t^k$-factor in $H$, we also find an almost $K_t^k$-factor in~$H$ using Proposition~\ref{prp:approximate}.
This proposition requires that $\delta_{k-1}(H) \ge (1-\binom{t-1}{k-1}^{-1} + \gamma) n$, which is precisely $(1-\beta(k,t) + \gamma) n$.
Therefore, we need to strengthen both Proposition~\ref{prp:approximate} and Lemma~\ref{lma:(K^k_t,i,eta)close} in order to improve Corollary~\ref{cor:main}~(ii).
Furthermore, the bound $\delta_{k-1}(H) \ge (1-\binom{t-1}{k-1}^{-1} + \gamma) n$ is natural in the sense that this is the threshold guaranteeing that we can greedily extend any vertex $v$ into a~$K_t^k$, see Proposition~\ref{prp:L(S)}.
If $\delta_{k-1}(H) < (1-\binom{t-1}{k-1}^{-1})n$, then many of our arguments would not hold.
Therefore, improving Corollary~\ref{cor:main}~(ii) seems difficult.
As we have mentioned in the introduction, we would like to know the asymptotic values of $t^k(n,t)$.

\section*{Acknowledgment}
The authors would like to thank Jie Han and the anonymous referees for the helpful comments and the careful reviews.


\begin{thebibliography}{10}

\bibitem{MR1885388}
N.~Alon and J.~H. Spencer, \emph{The probabilistic method}, second ed.,
  Wiley-Interscience Series in Discrete Mathematics and Optimization,
  Wiley-Interscience [John Wiley \& Sons], New York, 2000, With an appendix on
  the life and work of Paul Erd{\H{o}}s.

\bibitem{MR0200185}
K.~Corr\'{a}di and A.~Hajnal, \emph{On the maximal number of independent
  circuits in a graph}, Acta Math. Acad. Sci. Hungar. \textbf{14} (1963),
  423--439.

\bibitem{czygrinow2011tiling}
A.~Czygrinow, L.~DeBiasio, and B.~Nagle, \emph{Tiling 3-uniform hypergraphs
  with $K_4^3-2e$}, J. Graph Theory \textbf{75} (2014), 124--136.
  
\bibitem{MR0047308}
G.~A. Dirac, \emph{Some theorems on abstract graphs}, Proc. London Math. Soc.
  (3) \textbf{2} (1952), 69--81.

\bibitem{MR1370956}
R.~A. Duke, H.~Lefmann, and V.~R{\"o}dl, \emph{On uncrowded hypergraphs},
  Proceedings of the {S}ixth {I}nternational {S}eminar on {R}andom {G}raphs and
  {P}robabilistic {M}ethods in {C}ombinatorics and {C}omputer {S}cience,
  ``{R}andom {G}raphs '93'' ({P}ozna\'n, 1993), vol.~6, 1995, pp.~209--212.

\bibitem{MR0183654}
P.~Erd{\H{o}}s, \emph{On extremal problems of graphs and generalized graphs},
  Israel J. Math. \textbf{2} (1964), 183--190.

\bibitem{MR726456}
P.~Erd{\H{o}}s and M.~Simonovits, \emph{Supersaturated graphs and hypergraphs},
  Combinatorica \textbf{3} (1983), no.~2, 181--192.

\bibitem{MR1337352}
D.~A. Grable, K.~T. Phelps, and V.~R{\"o}dl, \emph{The minimum independence
  number for designs}, Combinatorica \textbf{15} (1995), no.~2, 175--185.

\bibitem{MR0297607}
A.~Hajnal and E.~Szemer{\'e}di, \emph{Proof of a conjecture of {P}. {E}rd{\H
  o}s}, Combinatorial theory and its applications, {II} ({P}roc. {C}olloq.,
  {B}alatonf\"ured, 1969), North-Holland, Amsterdam, 1970, pp.~601--623.

\bibitem{MR2496914}
H.~H{\`a}n, Y.~Person, and M.~Schacht, \emph{On perfect matchings in uniform
  hypergraphs with large minimum vertex degree}, SIAM J. Discrete Math.
  \textbf{23} (2009), no.~2, 732--748.

\bibitem{keevash2011geometric}
P.~Keevash and R.~Mycroft, \emph{A geometric theory for hypergraph matching}, 
	Mem. Am. Math. Soc., to appear.

\bibitem{2011arXiv1101.5830K}
I.~{Khan}, \emph{{Perfect matching in 3 uniform hypergraphs with large vertex
  degree}}, SIAM J. Discrete Math.
  \textbf{27} (2013), 1021--1039.

\bibitem{2011arXiv1101.5675K}
I.~{Khan}, \emph{{Perfect Matchings in 4-uniform hypergraphs}}, ArXiv e-prints
  (2011).

\bibitem{kierstead2010toward}
H.~Kierstead and D.~Mubayi, \emph{{Toward a Hajnal-Szemeredi theorem for
  hypergraphs}}, Arxiv preprint arXiv:1005.4079 (2010).

\bibitem{MR1848784}
A.~Kostochka, D.~Mubayi, V.~R{\"o}dl, and P.~Tetali, \emph{On the chromatic
  number of set systems}, Random Structures Algorithms \textbf{19} (2001),
  no.~2, 87--98.

\bibitem{MR2274077}
D.~K{\"u}hn and D.~Osthus, \emph{Loose {H}amilton cycles in 3-uniform
  hypergraphs of high minimum degree}, J. Combin. Theory Ser. B \textbf{96}
  (2006), no.~6, 767--821.

\bibitem{MR2207573}
D.~K{\"u}hn and D.~Osthus, \emph{Matchings in hypergraphs of large minimum degree}, J. Graph
  Theory \textbf{51} (2006), no.~4, 269--280.

\bibitem{MR2588541}
D.~K{\"u}hn and D.~Osthus, \emph{Embedding large subgraphs into dense graphs}, Surveys in
  combinatorics 2009, London Math. Soc. Lecture Note Ser., vol. 365, Cambridge
  Univ. Press, Cambridge, 2009, pp.~137--167.

\bibitem{kuhn2010matchings}
D.~K{\"u}hn, D.~Osthus, and A.~Treglown, \emph{Matchings in 3-uniform
  hypergraphs}, J. Combin. Theory Ser. B \textbf{103} (2013), no.~2, 291--305.

\bibitem{multihajnalsze}
A.~Lo and K.~Markstr{\"o}m, \emph{A multipartite version of the
  {H}ajnal-–{S}zemerédi theorem for graphs and hypergraphs}, Combin. Probab.
  Comput. \textbf{22} (2012), no.~1, 97--111.

\bibitem{MR3007146}
A.~Lo and K.~Markstr{\"o}m, \emph{Minimum codegree threshold for {$(K_4^3-e)$}-factors}, J.
  Combin. Theory Ser. A \textbf{120} (2013), no.~3, 708--721.

\bibitem{MR2438870}
O.~Pikhurko, \emph{Perfect matchings and {$K^3_4$}-tilings in hypergraphs of
  large codegree}, Graphs Combin. \textbf{24} (2008), no.~4, 391--404.

\bibitem{rodldirac}
V.~R{\"o}dl and A.~Ruci{\'n}ski, \emph{Dirac-type questions for hypergraphs--a
  survey (or more problems for {E}ndre to solve)}, An {I}rregular {M}ind
  ({S}zemer{\'e}di is 70), vol.~21, Bolyai Soc. Math. Studies, 2010.

\bibitem{MR2500161}
V.~R{\"o}dl, A.~Ruci{\'n}ski, and E.~Szemer{\'e}di, \emph{Perfect matchings in
  large uniform hypergraphs with large minimum collective degree}, J. Combin.
  Theory Ser. A \textbf{116} (2009), no.~3, 613--636.

\bibitem{yuster2007combinatorial}
R.~Yuster, \emph{Combinatorial and computational aspects of graph packing and
  graph decomposition}, Computer Science Review \textbf{1} (2007), no.~1,
  12--26.

\end{thebibliography}
\end{document}